\documentclass[11pt]{amsart}

\usepackage{amssymb,amsmath}
\usepackage{enumitem}
\usepackage{graphicx}
\usepackage{verbatim}
\usepackage{multirow}
\usepackage{framed}
\usepackage{color}

\newtheorem{theorem}{Theorem}[section]

\newtheorem{lemma}{Lemma}[section]

\newtheorem{proposition}[theorem]{Proposition}

\newtheorem{claim}{Claim}
\newtheorem{remark}{Remark}[section]

\newtheorem*{TheoremA}{Theorem A}
\newtheorem*{TheoremB}{Theorem B}

\def \l{\lambda}

\def \om{\omega}

\newcommand{\p}{\Phi}
\def \R{\mathbb{R}}

\def \M{\mathcal{M}}

\title[Thermodynamics via Inducing]{Thermodynamics via Inducing}
\author{Farruh Shahidi}
\address{Department of Mathematics \\ Pennsylvania State University \\ University Park, PA 16802, USA}
\email{fus144@psu.edu}
\author{Agnieszka Zelerowicz}
\address{Department of Mathematics \\ Pennsylvania State University \\ University Park, PA 16802, USA}
\email{axz157@psu.edu}

\begin{document}

\maketitle

\date{\today}

\begin{abstract}

We consider continuous maps $f:X\to X$ on compact metric spaces admitting inducing schemes of hyperbolic type introduced in \cite{ind} as well as the induced maps $\tilde{f}:\tilde{X}\to\tilde{X}$ and the associated tower maps $\hat{f}:\hat{X} \to \hat {X}$.
For a certain class of potential functions $\varphi$ on $X$, which includes all H\"older continuous functions, we establish thermodynamic formalism for each of the above three systems and we describe some relations between the corresponding equilibrium measures. Furthermore we study ergodic properties of these equilibrium measures including the Bernoulli property, decay of correlations, and the Central Limit Theorem (CLT). Finally, we prove analyticity of the pressure function for the three systems.
\end{abstract}

\section*{Introduction}\label{sec:intro}
\let\thefootnote\relax\footnote{The authors are partially supported by NSF grant
DMS-1400027.}

Celebrated works of Sinai, Ruelle, and Bowen establish thermodynamic formalism for uniformly hyperbolic systems.
Their main result claims that every H\"older continuous potential has a unique equilibrium measure. From the statistical physics point of view the statement about uniqueness implies absence of phase transitions. In addition, the equilibrium measure is known to have the Bernoulli property, to satisfy the Central Limit Theorem, and to have exponential decay of correlations. 
Those results apply to the family of geometric $t$-potentials $\varphi_t=-t \log | \det(Df_{|E^u})|$ 
(where $E^u$ denotes the unstable subspace) thus producing the one parameter family of equilibrium measures including such famous measures as Sinai-Ruelle-Bowen (SRB) measure and the measure of maximal entropy (MME).
We point out that the requirement on the potential to be H\"older continuous is crucial: indeed, one can
construct a family of potentials that are smooth everywhere except for a single point where they are not H\"older continuous, which admits a phase transition (see \cite{PZ}).

The natural next step is to study systems with weaker than uniform hyperbolicity. 
In this case even some H\"older continuous potential functions may exhibit phase transitions.
For example, this can be observed in the famous
Manneville-Pomeau map
(which is a one-dimensional expanding map with an indifferent fixed point) with respect to the
family of geometric $t$-potentials: at $t=1$ a phase transition occurred since the potential $\varphi_1$ has more than one equilibrium measure.
Moreover, such systems may not allow symbolic representation by a subshift of finite type.

Currently, one
of the most advanced methods to study non-uniformly hyperbolic systems is to use inducing maps on some appropriately chosen domains.
To this end, in this paper we consider maps with inducing schemes of hyperbolic type, which were introduced by Pesin, Senti, Zhang in \cite{ind}.
The principal feature of  
such maps is that the induced map possesses a countable generating partition allowing a symbolic representation of the induced map by the full shift on a countable set of states, thus bringing powerful tools of statistical physics into the study. In particular, using results of Aaronson-Denker \cite{AaD},  Mauldin-Urbanski \cite{MaulUrb}, Ruelle \cite{Rue}, Sarig \cite{Sar03}, and others one can construct Gibbs and equilibrium measures corresponding to H\"older continuous potentials with respect to standard metric in the space of two-sided sequences. Furthermore, one can obtain a sufficiently complete description of ergodic properties of these measures including ergodicity, the Bernoulli property, exponential rate of mixing and analyticity of the pressure function. From the statistical physics point of view, the latter means the absence of phase transitions, i.e. uniqueness of equilibrium states.

Examples of maps admitting inducing schemes of hyperbolic type include Young diffeomorphisms.
This is a broad class of diffeomorphisms which contains such systems as Billiard dynamical systems and H\'enon-type maps (see \cite{you}). 
In fact, recent work of Climenhaga, Luzzatto, and Pesin \cite{CLP} suggests that existence of a Young tower is a common phenomenon in smooth 
non-uniformly hyperbolic dynamics. 
 
In order to establish ergodic properties of a given system via inducing, one should consider an intermediate system - the tower map - which is an abstract, simplified model of the original map
and can be represented symbolically. 
Thus the way to effect thermodynamics of the original map is to first study  thermodynamics of the corresponding inducing system, then ``lift'' equilibrium measures to the tower and finally, ``project'' these measures to obtain the desired equilibrium measures for the original system. 
Difficulties arise
in studying ergodic properties of measures obtained by transferring them from one system to another.
 
Many remarkable results were obtained using Young tower techniques. Young \cite{you}, \cite{you2}, used tower construction
construct the Sinai-Ruelle-Bowen (SRB) 
measures for many important non-uniformly hyperbolic systems.
In addition, Pesin, Senti, and Zhang \cite{PSZ} proved the existence and uniqueness of equilibrium measures for maps with inducing schemes of hyperbolic type with respect to a certain, rather broad, class of potential functions.

 While there is an impressive body of work on Young towers, many results on the relations between the three systems are still missing and in our paper we present a systematic and rather complete study of the three systems: the original map $(X,f)$, the induced map $(\tilde{X},\tilde{f})$, and the tower map $(\hat{X},\hat{f})$, (see Section \ref{sec:tow}).
 We analyze relations among the corresponding equilibrium measures and their ergodic properties. We establish decay of correlations, the Central Limit Theorem, and the Bernoulli property for the three systems with respect to their corresponding equilibrium measures. Finally, we prove analyticity of the pressure function for the three systems. While the results on the Central Limit Theorem and bounds for the decay of correlations can be deduced from
 previous results, the one on the Bernoulli property and the analyticity of the pressure function are completely new.



The class of potential functions  $\varphi$ considered in this paper is defined
by a  set of conditions imposed on the induced potential $\tilde{\varphi}:\tilde{X}\to \tilde{X}$ (see Section \ref{sec:eqmeasures} for definitions and Section \ref{sec:Ps} for the list of conditions). We choose this class of potentials for two main reasons. First, this class is sufficiently large and includes all H\"older continuous functions on $X$. Second, our conditions on potential functions guarantee existence and uniqueness of equilibrium measures for the induced potential.


 

 The main results of this paper are as follows. Starting with the induced system 
$(\tilde{X},\tilde{f})$ we consider the class of potential functions on $X$ for which the corresponding induced potential satisfies \textbf{(P1)-(P4)} (see Section \ref{sec:Ps}). Then results in \cite{ind} guarantee existence and uniqueness of equilibrium measures for $(\tilde{X},\tilde{f})$ and the original map $(X, f)$. We then show how to obtain equilibrium measures for the tower map $(\hat{X},\hat{f})$. 
We also describe relations between unique equilibrium measures for the three systems (see Statement (1) of Theorem \ref{thm: tower} and Statements (1) and (2) of Theorem \ref{thm: manifold}).  

Our next goal is to obtain correlation estimates and the CLT for the map $(X,f)$ and for the tower map $(\hat{X},\hat{f})$. 
This is done using results in \cite{you2} and \cite{mel}.

One of the main contributions of this paper is the Bernoulli property of the map $(X,f)$ and of the tower map $(\hat{X},\hat{f})$. 
In fact, we show the following general result in a setting of symbolic dynamical systems.

Let $(S_A^{\mathbb{Z}},\sigma)$ be a topologically mixing  Markov shift on a countable set of states $S$. 
Choose a state $s\in S$ and a potential $\varphi : [s] \to \mathbb{R}$. Assume that $\varphi$ is locally H\"older continuous with respect to the induced shift $\bar{\sigma}$ on $[s]$, and that $P_G(\varphi)<\infty,$ where $P_{G}(\varphi)$ is the Gurevich pressure of $\varphi$ (see Section \ref{sec:prel}).
Let $\mu$ be the unique ergodic equilibrium measure for $\varphi$. 
Denote by $\hat{\mu}$ the measure on $S_A^{\mathbb{Z}}$ which is the lifted measure from $\mu$.
We have the following.

\begin{TheoremA}
If $(S_A^{\mathbb{Z}},\sigma, \hat{\mu})$ is mixing, then it is Bernoulli.
\end{TheoremA}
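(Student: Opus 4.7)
My plan is to establish Bernoullicity in two stages, mirroring the natural tower decomposition: first show that the base system $([s],\bar{\sigma},\mu)$ is Bernoulli, then upgrade this to the full system using the assumed mixing of $\hat{\mu}$. The overarching tool is Ornstein's isomorphism theory, and specifically the very weak Bernoulli (VWB) criterion: if an ergodic measure-preserving system has a generating partition that is VWB, then the system is Bernoulli.

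\medskip

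\textbf{Step 1 (the base is Bernoulli).} Restricting to $[s]$ and passing to the first-return map $\bar{\sigma}$ realizes the base as the full shift on the countable alphabet $\mathcal{A}$ consisting of first-return paths from $s$ to itself. Under this identification $\varphi$ pulls back to a locally H\"older continuous potential on $\mathcal{A}^{\mathbb{Z}}$ with finite Gurevich pressure, and by Sarig's thermodynamic formalism the unique equilibrium state $\mu$ satisfies the Gibbs inequality on cylinders. This gives uniform bounded distortion of cylinder measures, which together with the full-shift structure and a coupling argument (Bowen's original finite-alphabet proof, adapted to countable alphabets) verifies VWB for the one-cylinder partition on $\mathcal{A}^{\mathbb{Z}}$. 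Hence $([s],\bar{\sigma},\mu)$ is Bernoulli.

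\medskip

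\textbf{Step 2 (lift to the tower).} By definition $\hat{\mu}$ is obtained by spreading $\mu$ across orbit segments $y,\sigma y,\dots,\sigma^{r(y)-1}y$, where $r$ is the first return time to $[s]$, and renormalizing by $\int r\,d\mu$; hence $(S_A^{\mathbb{Z}},\sigma,\hat{\mu})$ is isomorphic to the Kakutani discrete suspension of $([s],\bar{\sigma},\mu)$ with roof $r$. I would verify VWB directly for the one-cylinder partition $\mathcal{P}$ of $S_A^{\mathbb{Z}}$ under $\hat{\mu}$. An atom of $\bigvee_{i=0}^{n-1}\sigma^{-i}\mathcal{P}$ decomposes according to the floor of its initial point; conditional on the floor, the atom projects to a finite cylinder in the base, for which the Bernoullicity of $\mu$ provides $\bar{d}$-closeness of conditional distributions relative to the distant future. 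The mixing hypothesis on $\hat{\mu}$ is precisely what is needed to synchronize the ``floor information'' between the conditioning past and the future across long time gaps. Ornstein's VWB-implies-Bernoulli theorem then concludes.

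\medskip

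\textbf{Main obstacle.} The real work lies in Step 2. Mixing of $\hat{\mu}$ is only a qualitative statement, whereas VWB requires \emph{uniform} quantitative $\bar{d}$-control, and furthermore the roof $r$ is in general unbounded so that tall floors contribute atoms of $\mathcal{P}$ of arbitrarily small $\hat{\mu}$-measure. The Gibbs property from Step~1 supplies the horizontal uniformity, but tall-floor contributions have to be absorbed carefully, for instance by truncating at a large cutoff $M$ and using integrability $\int r\,d\mu<\infty$ to dominate the tail in the VWB error term. A cleaner alternative, which I would try first, is to invoke a classical result to the effect that a mixing Kakutani extension of a Bernoulli automorphism is itself Bernoulli; this would reduce Step~2 to a single citation and isolate all real estimates to Step~1.
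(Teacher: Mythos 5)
Your Step 1 is fine (and is essentially Statement (4) of Theorem 2.1, via Sarig/Daon), but it is not where the difficulty lies, and in fact the paper's proof never passes through Bernoullicity of the base at all. The genuine gap is Step 2, and your fallback citation does not exist: it is \emph{false} that a mixing discrete suspension (tower) of a Bernoulli automorphism must be Bernoulli. By the Ornstein--Rudolph--Weiss theory of Kakutani equivalence, any positive-entropy loosely Bernoulli transformation is evenly equivalent to a Bernoulli shift, hence representable as a tower with integrable roof over a Bernoulli base; and there exist mixing, positive-entropy, loosely Bernoulli, non-Bernoulli transformations (e.g.\ the product of a mixing rank-one map with a Bernoulli shift). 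So abstract Bernoullicity of $([s],\bar{\sigma},\mu)$ plus mixing of $\hat{\mu}$ cannot suffice; the specific Gibbs structure of $\mu$ and the fact that $\tau$ is constant on first-return cylinders must be used quantitatively. Your own sketch of a direct VWB verification correctly identifies this obstacle (``mixing is only qualitative'') but does not overcome it.

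What the missing argument actually requires is a renewal-type almost-independence estimate for tower cylinders: for cylinders $A$ ending and $B$ beginning in a fixed finite set of first-return words and separated by a gap $k\geq K$, one needs $\hat{\mu}(A\cap B)=e^{\pm\epsilon}\hat{\mu}(A)\hat{\mu}(B)$, which (by Sarig's reduction) gives weak Bernoulli --- not merely VWB --- for the finite coarsenings of the tower partition. The paper obtains this by (i) fixing a finite collection $\gamma$ of $m$-blocks of first-return words carrying measure $>1-\alpha$, (ii) invoking mixing of $\hat{\mu}$ \emph{only} on the finitely many pairs $C,C'\in\gamma$ to fix $K$, (iii) propagating this to arbitrary $[AC]$, $[C'B]$ via Ruelle-operator bounded-distortion lemmas, which yield the local-limit-type identity $\sum_{M}\mu(S([AC],[C'B],M,k))=e^{\pm\delta}\mu([AC])\mu([C'B])/Q_{\mu}$, and (iv) absorbing the words falling outside $\gamma$ (including your ``tall floors'') into an error of order $\delta\hat{\mu}(A)\hat{\mu}(B)$ using the Gibbs/direct-product estimates. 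Steps (iii) and (iv) are exactly the content your proposal leaves open, and they rely on the Ruelle operator and the Gibbs property of $\mu$, not on its Bernoullicity.
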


Our proof (see Section \ref{proof:thm:tower}) uses Ornstein's theory as well as some ideas of Sarig in \cite{SarBernoulli}. We stress that while the result in \cite{SarBernoulli} considers equilibrium measures for locally H\"older potential functions, our result is stronger, as it only requires that the {\bf induced} potential is locally H\"older.
 
Finally, we prove that the pressure for the map $f$ and the tower map $\hat{f}$ is  real analytic. Our argument follows ideas of Sarig in \cite{Sar00}, and extends the result of Sarig to the case of hyperbolic towers.

In terms of applications our main goal is to build thermodynamics
for Young diffeomorphisms and the family of geometric $t$-potentials for $t$ in some interval that contains $[0,1)$ (see Section \ref{sec:appl}). 
Particular examples include the Katok map \cite{THK}, the  H\'enon map at the first bifurcation \cite{ind}, and the slow-down map of the Smale-Williams solenoid \cite{Zel}. 
We emphasize that the method involved is general and is believed to be applicable to a broader class of examples. 
We also remark that
while decay of correlations and the CLT for Young diffeomorphisms as mentioned above were studied by many authors (see \cite{Gou},\cite{Sar03}\cite{mel},\cite{you},\cite{you2}), the Bernoulli property and analyticity of the pressure are new results.

While our main goal is to study equilibrium measures corresponding to  a broad class of potential functions, our principal result on the Bernoulli property can be obtained in a greater generality. Namely, in line with our setting, we show that for any $\tilde{f}$-invariant ergodic measure $\tilde{\mu}$ for the induced system $(\tilde{X},\tilde{f}),$ certain conditions on $\tilde{\mu}$ guarantee that the corresponding lifted measure $\hat{\mu}$ which is invariant for $(\hat{X},\hat{f})$ and the projected measure $\mu$ which is invariant for $(X,f)$ both have the Bernoulli property (see Theorem \ref{thm:generalcase}).

The paper is organized as follows. In Section \ref{sec:main-def}, we define our main objects -- maps with inducing schemes and their associated towers. In Section \ref{sec:main-results} we state our main results. In particular, Theorem \ref{thm: induced} gives decay of correlations, CLT, and the Bernoulli property for the induced system and Theorems \ref{thm: tower} and \ref{thm: manifold} establish these properties for the tower map and for the original map with an inducing scheme respectively. 
Theorem \ref{thm:pressure} establishes analyticity of the pressure for maps with inducing schemes as well as the associated tower maps.
Theorem \ref{thm:generalcase} deals with general measures on the inducing domain which are invariant under the induced system. We provide conditions on these measures that guarantee the Bernoulli property for the tower map and the original system.
Section \ref{sec:appl} is devoted to applications of our results to Young diffeomorphisms. In Section \ref{sec:proofs} we prove our main theorems after introducing some notions from the theory of countable Markov shifts. 

\subsubsection*{Acknowledgments} We would like to thank our advisor, Yakov Pesin, for posing the problem and for valuable suggestions.

\section{Maps with inducing schemes and associated tower maps}\label{sec:main-def}
In this section we introduce a map with inducing scheme, the induced map, and the associated tower map.

\subsection{A map with an inducing scheme, $(X,f)$}
Let $f:X\to X$ be a continuous map on a compact metric space. Throughout the paper we assume that $f$ has finite topological entropy $h_{top}(f)<\infty.$\\

Given a countable collection of disjoint Borel sets $S=\{J\}$ and a positive integer-valued function $\tau:S\to\mathbb{N},$ we say that $f$ admits an \textit{inducing scheme of hyberbolic type} $\{S,\tau\}$ with  \textit{inducing domain} $\tilde{X}:=\bigcup_{J\in S}J$ and \textit{inducing time} $\tau: X\to \mathbb{N}$ defined by

$$\tau(x)=\begin{cases}
\tau(J),\ \ x\in J\\
0,\ \ \ \ \  \ x\notin \tilde{X}
\end{cases}$$
provided the following conditions \textbf{(I1)-(I2)} hold:\\

\textbf{(I1)} For any $J\in S$ one has

$$f^{\tau(J)}(J)\subset \tilde{X}\ \ and\ \  \bigcup\limits_{J\in S}f^{\tau(J)}(J)=\tilde{X}.$$

Moreover, $f^{\tau(J)}_{|J}$ can be extended to a homeomorphism of a neighborhood of $J;$

\textbf{(I2)} For every bi-infinite sequence $ \boldsymbol{\omega}=(\om_n)_{n\in\mathbb{Z}}\in S^{\mathbb{Z}}$ there exist a unique sequence $\boldsymbol{x}=\boldsymbol{x}(\boldsymbol{\om})=(x_n=x_n(\boldsymbol{\om}))_{n\in\mathbb{Z}}$ such that\\
(a) $x_n\in\bar{J}_{\om_n}$ and $f^{\tau(J_{\om_n})}(x_n)=x_{n+1};$\\
(b) if $x_n(\boldsymbol{\om})=x_n(\boldsymbol{\om'})$ for all $n\le 0$, then $\boldsymbol{\om}=\boldsymbol{\om'}.$

\subsection{The induced map $(\tilde{X},\tilde{f})$}\label{def: induced}

Condition $\textbf{(I1)}$ allows one to define the \textit{ induced map} $\tilde{f}:\tilde{X}\to \tilde{X}$ by
$$\tilde{f}_{|J}:=f^{\tau(J)}_{|J},\ \ \ J\in S.$$

For each $J\in S,$ the map $\tilde{f}_{|J}$ can be extended to the closure $\bar{J}$ producing a map $\tilde{f}:\bigcup \bar{J} \to \overline{\tilde{X}}$.\\

Condition $\textbf{(I2)}$ allows one to define the coding map $\pi: S^{\mathbb{Z}}\to\bigcup\bar{J}$ by
$$\pi(\boldsymbol{\om}):=x_0(\boldsymbol{\om}).$$
Let $\Omega=\{\boldsymbol{\om}\in S^{\mathbb{Z}}: x_n(\boldsymbol{\om})\in J_{\om_n} \ \text{for all}\  n\in\mathbb{Z}\}.$

\begin{proposition}\cite{ind}\label{conj}
The map $\pi$ defined above has the following properties:

\begin{enumerate}
\item $\pi$ is well defined, continuous, and for all $\boldsymbol{\om}\in S^{\mathbb{Z}}$ one has
$$\pi\circ\sigma(\boldsymbol{\om})=\tilde{f}\circ\pi(\boldsymbol{\om});$$

\item $\pi$ is one-to-one on $\Omega$ and $\pi(\Omega)=\tilde{X};$

\item  if $\pi(\boldsymbol{\om})=\pi(\boldsymbol{\om'})$ for some $\boldsymbol{\om}, \boldsymbol{\om'}\in\Omega$, then $\om_n=\om'_n$ for all $n\ge 0.$
\end{enumerate}
\end{proposition}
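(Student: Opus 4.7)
The plan is to verify each of the three properties directly from hypothesis \textbf{(I2)}, which is tailored precisely to support the construction of $\pi$.

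First, for well-definedness, \textbf{(I2)} assigns to each bi-infinite $\boldsymbol{\om} \in S^{\mathbb{Z}}$ a unique orbit $(x_n(\boldsymbol{\om}))$, so $\pi(\boldsymbol{\om}):=x_0(\boldsymbol{\om})$ is unambiguous. Equivariance follows from that same uniqueness: the shifted orbit $(x_{n+1}(\boldsymbol{\om}))_{n\in\mathbb Z}$ satisfies the defining relations of \textbf{(I2)} for $\sigma\boldsymbol{\om}$, hence equals $(x_n(\sigma\boldsymbol{\om}))$; evaluating at $n=0$ gives $\pi(\sigma\boldsymbol{\om})=x_1(\boldsymbol{\om})=\tilde f(\pi(\boldsymbol{\om}))$. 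For continuity I would use a sequential-compactness argument on $X$: if $\boldsymbol{\om}^{(k)}\to\boldsymbol{\om}$ in the product topology, then for any accumulation point $y$ of $\{\pi(\boldsymbol{\om}^{(k)})\}$ one can, by passing to a subsequence and a diagonal argument, produce an orbit $(y_n)$ with $y_0=y$ and $y_n\in\bar J_{\om_n}$, $f^{\tau(J_{\om_n})}(y_n)=y_{n+1}$; uniqueness in \textbf{(I2)} then forces $y=x_0(\boldsymbol{\om})=\pi(\boldsymbol{\om})$.

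Next, for statement (2), I prove $\pi(\Omega)=\tilde X$ by constructing an inverse. Given $x\in\tilde X$, disjointness of $\{J\}_{J\in S}$ picks out a unique $\om_0\in S$ with $x\in J_{\om_0}$; iterating $\tilde f$ forward generates $\om_1,\om_2,\dots$. For negative indices I use the last clause of \textbf{(I1)} that $f^{\tau(J)}|_J$ extends to a homeomorphism of a neighborhood of $J$, so backward iterates along the orbit are well-defined and determine $\om_{-1},\om_{-2},\dots$. The resulting $\boldsymbol{\om}$ lies in $\Omega$ and maps to $x$. For injectivity on $\Omega$: if $\pi(\boldsymbol{\om})=\pi(\boldsymbol{\om}')$ with both in $\Omega$, equivariance yields $x_n(\boldsymbol{\om})=x_n(\boldsymbol{\om}')$ for every $n\ge 0$, and since $x_n\in J_{\om_n}\cap J_{\om'_n}$, disjointness forces $\om_n=\om'_n$ for $n\ge 0$; to match past coordinates, I would apply \textbf{(I2)(b)} after showing the orbits also agree for $n\le 0$, which is done by inverting $\tilde f$ along the common forward trajectory via the neighborhood homeomorphisms from \textbf{(I1)}.

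Statement (3) then follows by restricting the injectivity argument to forward indices: equivariance and the condition $x_n\in J_{\om_n}\subset\tilde X$ imposed by $\boldsymbol{\om}\in\Omega$, combined with the disjointness of the sets $J\in S$, immediately give $\om_n=\om'_n$ for all $n\ge 0$. I expect the main obstacle to be the backward-construction step in (2) and the rigorous extraction of a limit orbit in the continuity argument: both rely on careful use of the homeomorphism extensions in \textbf{(I1)}, since the $J$'s are only Borel and the map $\tilde f$ need not be globally invertible on $\tilde X$.
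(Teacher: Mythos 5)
This proposition is imported from \cite{ind} and the paper contains no proof of it, so your argument can only be judged on its own terms. Parts (1) and (3) are fine: well-definedness and equivariance follow from the uniqueness clause of \textbf{(I2)} exactly as you say, your compactness-plus-diagonal argument for continuity is the standard one (every accumulation point of $\pi(\boldsymbol{\om}^{(k)})$ extends to a limit orbit satisfying \textbf{(I2)}(a) for $\boldsymbol{\om}$, hence equals $x_0(\boldsymbol{\om})$), and the forward induction using disjointness of the sets $J\in S$ correctly yields $\om_n=\om'_n$ for $n\ge 0$. For surjectivity in (2), one small correction: the existence of a preimage of $x\in\tilde X$ in \emph{some} $J$ comes from the covering clause $\bigcup_{J}f^{\tau(J)}(J)=\tilde X$ of \textbf{(I1)}, not from the homeomorphism extension (which only gives uniqueness of the preimage \emph{within} a fixed $J$); and the backward symbols are chosen, not ``determined'' --- which is harmless since only existence is needed.

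The genuine gap is in the injectivity half of (2). Your plan is to show that the two orbits agree for all $n\le 0$ ``by inverting $\tilde f$ along the common forward trajectory'' and then invoke \textbf{(I2)}(b). But nothing in \textbf{(I1)}--\textbf{(I2)} makes $\tilde f$ injective on $\tilde X$: the homeomorphism extension makes each branch $f^{\tau(J)}|_J$ injective, while the images $f^{\tau(J)}(J)$ for distinct $J$ are only required to cover $\tilde X$, not to be pairwise disjoint. Hence a point $x_0=x_0(\boldsymbol{\om})=x_0(\boldsymbol{\om}')$ may a priori have distinct preimages $x_{-1}\in J_{\om_{-1}}$ and $x'_{-1}\in J_{\om'_{-1}}$ with $\om_{-1}\neq\om'_{-1}$; to ``invert $\tilde f$'' you must already know which branch to invert, i.e.\ you must already know $\om_{-1}=\om'_{-1}$, which is part of what is to be proved. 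Consequently the hypothesis of \textbf{(I2)}(b) (agreement of the orbits for \emph{all} $n\le 0$) is never established, and the splicing/uniqueness tricks that work for the future do not produce it. This is precisely the delicate point of the proposition --- in the Young-diffeomorphism application it is handled by the disjointness of the $u$-sets $\Lambda_i^u=f^{\tau_i}(\Lambda_i^s)$ (Condition (Y2)) --- and your proposal does not close it; you should either extract from \textbf{(I2)} an argument forcing the backward orbits to coincide or follow the proof in \cite{ind} for this step.
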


\hspace{8mm}

For $x,y\in \tilde{X}$ let $s(x,y)$ denote the smallest integer $n\geq 0$ for which
$$(\pi^{-1}(x))_{n}\neq (\pi^{-1}(y))_{n} \text{ or } (\pi^{-1}(x))_{-n}\neq (\pi^{-1}(y))_{-n}.$$
In literature $s(x,y)$ is called the \textit{separation time}.

\subsection{The tower map $(\hat{X},\hat{f})$}\label{sec:tow}

Consider $$ \hat{X}:= \left\{ (x,k) ~ | ~ x\in \tilde{X},~k\in\{0,1,\ldots,\tau(x)-1  \}   \right\} .$$

By the tower map for $f:X\to X$  we refer to a dynamical system $\hat{f}: \hat{X} \to \hat{X}$ given by:
$$ \hat{f}(\hat{x})=\hat{f}(x,k):= \left\{ \begin{matrix} (x,k+1) & \text{for }k<\tau(x)-1\\
(\tilde{f}(x),0) & \text{for }k=\tau(x)-1   \end{matrix} \right.  .$$
The map $\pi_0:\hat{X}\to X$ defined by $\pi_0(x,k)=f^k(x)$ satisfies the equation $f\circ\pi_0=\pi_0\circ\hat{f}.$
In general, $\pi_0$ need not to be either one-to-one or onto. However, the image $\pi_0(\hat{X})$ has full measure with respect to any ergodic measure
$\mu$ such that $\mu(\tilde{X})>0$. 

We end this section with defining a metric on $\hat{X}$. 
The separation time defined in the previous section can be extended to $\hat{X}$ by setting
$$
s((x,k),(y,l)) = \left\{ \begin{array}{lcr}
0 & \text{ if } & k\neq l \\
s(x,y) & \text{ if } & k= l, \\
\end{array} \right.
$$
where $s(x,y)$ was defined in the previous section.

Given $0<\theta<1$ we define a symbolic metric $d_{\theta}$ on $\hat{X}$ by setting
\begin{equation}\label{dtheta} 
d_{\theta}((x,k),(y,l))=\theta^{s((x,k),(y,l))}. 
 \end{equation}
 Throughout the paper by a class of H\"older continuous functions on $\hat{X}$ we mean the class of functions which are 
  H\"older continuous with respect to $d_{\theta}$.

We would like to stress that throughout the paper the symbol $\tilde{ ~ }$ is used to denote objects related to the induced system
and the symbol $\hat{ ~ }$ is used to denote objects related to the tower map. For example having a measure $\mu$ for the original system,
$\tilde{\mu}$ denotes the corresponding measure for the induced system, and $\hat{\mu}$ denotes the corresponding measure for the tower map.

\subsection{Lifting and projecting measures}\label{sec:relations}

We are going to use the following notation. Having a Borel transformation $T:D \to D$ on a metric space $D$ we denote by $\mathcal{M}(D,T)$
the space of $T$-invariant Borel probability measures on $D$. We denote the space of $T$-invariant ergodic Borel probability measures on $D$
by $Erg(D,T)$.

 For any $\tilde{\mu}\in \mathcal{M}(\tilde{X},\tilde{f})$ let
\begin{equation}\label{def:Qu}
 Q_{\tilde{\mu}}:= \int_{\tilde{X}} \tau d\tilde{\mu}.  
\end{equation}
If $Q_{\tilde{\mu}}<\infty$ we define the corresponding \textit{lifted measure} $\mu=\mathcal{L}(\tilde{\mu})$ on $X$ as follows:
for any $E\subset X$,
\begin{equation}\label{def:Lu} 
\mathcal{L}(\tilde{\mu})(E):= \frac{1}{Q_{\tilde{\mu}}}  \sum_{J\in S} \sum_{k=0}^{\tau(J)-1}\tilde{\mu}(f^{-k}(E)\cap J)  .  
\end{equation}

We can identify every point $(x,0)\in \hat{X}$ with a point $x\in \tilde{X}\subset X$. Observe that using this identification, and replacing $f$ with $\hat{f}$ in (\ref{def:Lu}), one obtains lifted measure $\hat{\mu}=\hat{\mathcal{L}}(\tilde{\mu})$ on $\hat{X}$.

It is clear that $\mathcal{L}(\tilde{\mu})\in \mathcal{M}(X,f)$ and $\hat{\mathcal{L}}(\tilde{\mu})\in \mathcal{M}(\hat{X},\hat{f})$.
In addition, if $\tilde{\mu}\in Erg(\tilde{X},\tilde{f})$, then $\mathcal{L}(\tilde{\mu})\in Erg(X,f)$ and $\hat{\mathcal{L}}(\tilde{\mu})\in Erg(\hat{X},\hat{f})$.

We consider the class of \textit{lifted} measures on X, 
$$\mathcal{M}_L(X,f):= \{ \mu\in \mathcal{M}(X,f) | \text{there is }\nu\in \mathcal{M}(\tilde{X},\tilde{f}) \text{ with }\mathcal{L}(\nu)=\mu  \}.$$
Similarly we define $\mathcal{M}_L(\hat{X},\hat{f})$.

Consider a measure $\hat{\mu}\in \mathcal{M}(\hat{X},\hat{f})$ with $\hat{\mu}(\tilde{X}\times \{0\})>0$. After identifying $\tilde{X}\times \{ 0 \}$ with $\tilde{X}$ it is 
a simple observation that every such measure induces a measure $\tilde{\mu}\in \mathcal{M}(\tilde{X},\tilde{f})$ defined by
$\tilde{\mu} :=\frac{1}{\hat{\mu}(\tilde{X})} \hat{\mu}_{|\tilde{X}}$.
In addition, 
$$Q_{\tilde{\mu}}= \int_{\tilde{X}} \tau d\tilde{\mu}= \frac{1}{\hat{\mu}(\tilde{X})}   \int_{\tilde{X}} \tau d \hat{\mu}=  \frac{1}{\hat{\mu}(\tilde{X})}  \hat{\mu} (\hat{X})        =\frac{1}{\hat{\mu}(\tilde{X})}.$$

This means that  $\mathcal{M}_L(\hat{X},\hat{f}) = \{   \hat{\mu}\in \mathcal{M}(\hat{X},\hat{f})|  \hat{\mu}(\tilde{X}\times\{0\}) >0    \}$.

Another consequence of this fact is that every measure $\mu\in\M_L(X,f)$ can be obtained as a \textit{projection} $\mathcal{R}(\hat{\mu})$ of a certain measure $\hat{\mu}\in \mathcal{M}(\hat{X},\hat{f})$. Namely,
$\mu =  \mathcal{L}(\tilde{\mu}) =    \mathcal{L}(\frac{1}{\hat{\mu}(\tilde{X})} \hat{\mu}_{|\tilde{X}}) =: \mathcal{R}(\hat{\mu}). $

The operations of lifting and projecting measures establish relations between invariant measures for the three dynamical systems.

\subsection{Equilibrium measures}\label{sec:eqmeasures}
In studying equilibrium measures using methods of inducing we are forced to consider only those measures which are lifted from
measures on the inducing domain. Therefore, we need to adjust the notion of an equilibrium measure that fits our setting.

Given a potential $\varphi: X \to \mathbb{R}$,
a measure $\mu_{\varphi}\in \mathcal{M}_L(X,f)$ is called an \textit{equilibrium measure} for $\varphi$
(in the space $\mathcal{M}_L(X,f)$ of \textit{lifted} measures) if
$$ P_L(\varphi):=\sup_{\mu\in \mathcal{M}_L(X,f)} \{ h_{\mu}(f) + \int_{X}  \varphi d\mu  \} 
=  h_{\mu_{\varphi}}(f) + \int_{X}  \varphi d\mu_{\varphi}. $$

We define the corresponding potential $\hat{\varphi}$ on the tower $\hat{X}$ as
$$\hat{\varphi}(x,k):= \varphi ( f^k(x)).$$
A measure $\mu_{\hat{\varphi}}\in \mathcal{M}_L(\hat{X},\hat{f})$ is called an \textit{equilibrium measure} for $\hat{\varphi}$
(in the space $\mathcal{M}_L(\hat{X},\hat{f})$ of \textit{lifted} measures) if
$$ P_L(\hat{\varphi}):=\sup_{\hat{\mu}\in \mathcal{M}_L(\hat{X},\hat{f})} \{ h_{\hat{\mu}}(\hat{f}) + \int_{\hat{X}}  \hat{\varphi} d\hat{\mu}  \} 
=  h_{\mu_{\hat{\varphi}}}(\hat{f}) + \int_{\hat{X}}  \hat{\varphi} d\mu_{\hat{\varphi}}. $$

We define the \textit{induced potential} $\tilde{\varphi}: \tilde{X} \to \mathbb{R}$ by
$$ \tilde{\varphi}(x):= \sum_{k=0}^{\tau(x)-1}\varphi(f^k(x)).  $$
A measure $\nu_{\tilde{\varphi}}\in \mathcal{M}(\tilde{X},\tilde{f})$ is called an \textit{equilibrium measure} for $\tilde{\varphi}$ if
$$  h_{\nu_{\tilde{\varphi}}}(\tilde{f}) + \int_{\tilde{X}}  \tilde{\varphi} d\nu_{\tilde{\varphi}}= \sup_{\tilde{\nu}\in \mathcal{M}(\tilde{X},\tilde{f})} \{ h_{\tilde{\nu}}(\tilde{f}) + \int_{\tilde{X}}  \tilde{\varphi} d\tilde{\nu}  \} . $$

\section{Main results}\label{sec:main-results}

We start by stating additional conditions on the inducing scheme and on potential functions.

\subsection{Conditions on the inducing scheme}\label{sec:Is}

Denote by $\sigma: S^{\mathbb{Z}}\rightarrow S^{\mathbb{Z}}$ the left full shift and let
$$\Omega:=\{\boldsymbol{\om}\in S^{\mathbb{Z}}: x_n(\boldsymbol{\om})\in J_{\om_n}\ for\ all\ n\in\mathbb{Z}\},$$
where $x_n$ and $\omega_n$ are as in Condition \textbf{(I2)}.

\textbf{(I3)}  The set $S^{\mathbb{Z}}\setminus\Omega$ supports no $\sigma$- invariant measure which gives positive weight to any open subset.

\textbf{(I4)} The map $\tilde{f}:\tilde{X}\to \tilde{X}$ given by
$\tilde{f}_{|J}:=f^{\tau(J)}_{|J}$ has at least one periodic point in $\tilde{X}.$

$\textbf{(I5)}\ \   gcd \{ \tau(J)  | J\in S \} =1,$\\
where gcd stands for the greatest common divisor. Condition \textbf{(I5)} is called the arithmetic condition.

\subsection{Conditions on potential functions} \label{sec:Ps} Let $\varphi:X\to \mathbb{R}$ be a potential function.
We express conditions on the potential $\varphi$ in terms of the induced potential:

$\textbf{(P1)}$ The induced potential $\tilde{\varphi}$ can be extended by continuity to a function on $\bar{J}$ for all $J\in S;$

$\textbf{(P2)}$ The induced potential $\tilde{\varphi}$ is \textit{locally H\"{older} continuous}, i.e. for any $n\geq 1$
$$ Var_n(\tilde{\varphi}):=Var_n(\tilde{\varphi}\circ\pi) \leq C\theta^n  $$
for some constants $C>0$ and $0<\theta<1$, where $\pi$ is the coding map defined in Section \ref{def: induced} and $Var_n$ is defined in Section \ref{sec:prel};

$\textbf{(P3)}$ $$\sum_{J\in S} \sup_{x\in J} \exp \tilde{\varphi}(x)<\infty;$$

The \textit{normalized induced potential}  $\varphi^+: \tilde{X} \to \mathbb{R}$  is given by
$$ \varphi^+: = \tilde{\varphi} -  P_L(\varphi)\tau.  $$

$\textbf{(P4)}$ there exists $\epsilon>0$ such that  

$$  \sum_{J\in S}\tau(J)\sup_{x\in J} \exp (\varphi^+(x) + \epsilon \tau(x)) <\infty.   $$

\subsection{Preliminaries}

Let $X$ be a measurable space and $T:X\to X$ a measurable invertible transformation preserving a measure $\mu$. For reader's convenience we recall some properties of the system $(X,T,\mu)$ which are of interest to us in the paper.\\

\begin{enumerate}
\item \textbf{The Bernoulli property.} We say that $(X,T,\mu)$ has the \emph{Bernoulli property} if it is metrically isomorphic to the Bernoulli shift 
$(Y^{\mathbb{Z}},\sigma,\nu^{\mathbb{Z}})$ associated to some Lebesgue space $(Y,\nu)$, where $\nu$ is metrically isomorphic to the Lebesgue measure on an interval together with at most countably many atoms.\\
\item \textbf{Decay of correlations} Let $\mathcal{H}_1$ and $\mathcal{H}_2$ be two classes of observables on $X$. For $h_1\in\mathcal{H}_1$ and $h_2\in\mathcal{H}_2$ define the correlation function 
$$
\text{Cor}_n(h_1\circ T^n, h_2):=\int h_1(T^n(x))h_2(x)\,d\mu -\int h_1(x)\,d\mu
\int h_2(x)\,d\mu.
$$
We say that $T$ has \emph{exponential decay of correlations} with respect to classes $\mathcal{H}_1$ and $\mathcal{H}_2$ if there exist $C>0, \ 0<\theta<1$ such that for any $h_1\in\mathcal{H}_1$, $h_2\in\mathcal{H}_2$ and any $n>0$
$$|\text{Cor}_n(h_1\circ T^n, h_2)|<C\theta^n$$
and  say that $T$ has \emph{polynomial decay of correlations}  if there exists $\gamma>0$ such that
$$
|\text{Cor}_n( h_1\circ T^n, h_2)|\le Cn^{-\gamma},
$$
where $C=C(h_1,h_2)>0$ is a constant.


\item \textbf{The Central Limit Theorem} We say that $T$ satisfies the \emph{Central Limit Theorem (CLT)} for a class $\mathcal{H}$ of observables on $X$ if there exists $\sigma>0$ such that for any $h\in\mathcal{H}$ with 
$\int h=0$ the sum
$$
\frac 1{\sqrt{n}}\sum\limits_{i=0}^{n-1}h(f^i(x))
$$ 
converges in law to a normal distribution $\textit{N}(0,\sigma)$.
\end{enumerate}

\subsection{Statements of main results}

In this section we establish existence, uniqueness, and describe ergodic properties of equilibrium measures for the three systems: $(X,f,\varphi)$,
$(\tilde{X},\tilde{f},\varphi^+)$, and $(\hat{X},\hat{f},\hat{\varphi})$. We denote the class of all H\"older continuous functions on $X$ with exponent $\alpha$ by  $C^\alpha(X).$
The sequence $\nu_{\varphi^+}(\tau>n):= \nu_{\varphi^+}(\{x\in \tilde{X}| \tau(x)>n\})$ is known as the \textit{tail} of the measure $\nu_{\varphi^+}.$ We say that the tail is \textit{exponential (polynomial)} if $\nu_{\varphi^+}(\tau>n)$ decays exponentially(polynomially). We first consider the induced system $(\tilde{X},\tilde{f}, \tilde{\varphi}).$

\begin{theorem}\label{thm: induced}
Let $\tilde{f}: \tilde{X} \to \tilde{X}$ be an induced map satisfying Condition \textbf{(I3)}. Assume that the induced potential $\tilde{\varphi}$ satisfies Conditions \textbf{(P1)-(P4)}. Then:

\begin{enumerate}
\item 	 There exists a unique equilibrium Gibbs measure $\nu_{\varphi^{+}}$ for $\varphi^{+};$

\item The map $\tilde{f}$ has exponential decay of correlations for the observables $h_1\in L^{\infty}(\tilde{X},\nu_{\varphi^{+}})$ and $h_2\in C^{\alpha}(\tilde{X});$

\item The map $\tilde{f}$ satisfies the CLT with respect to $\nu_{\varphi^{+}}$ and any observable $h\in C^{\alpha}(\tilde{X});$

\item The map $\tilde{f}$ has the Bernoulli property with respect to $\nu_{\varphi^{+}};$

\end{enumerate}
\end{theorem}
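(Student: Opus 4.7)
The plan is to reduce the induced system to a topologically mixing countable Markov shift via the coding map $\pi$ and then invoke the now-standard thermodynamic formalism of Mauldin--Urbanski and Sarig. Condition \textbf{(I3)} guarantees that $\pi$ is a measure-theoretic isomorphism onto $\tilde{X}$ for any invariant measure supported on $\Omega$, and \textbf{(P2)} gives that the pull-back $\tilde{\varphi}\circ\pi$ is locally H\"older on $S^{\mathbb{Z}}$ with the standard symbolic metric. Since the symbolic system is the full shift on $S$, it is topologically mixing, the BIP property holds trivially, and the arithmetic condition is not needed at this level. Condition \textbf{(P3)} ensures $P_G(\tilde{\varphi})<\infty$ and \textbf{(P4)} yields the exponential tail estimate $\nu_{\varphi^+}(\tau>n)\leq C\rho^n$ for some $\rho<1$, which will be used in later parts of the paper.

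For part (1), I would appeal directly to the results in \cite{ind}, which under \textbf{(P1)--(P4)} give a unique Ruelle--Perron--Frobenius measure $\nu_{\varphi^+}$ for the normalized potential $\varphi^+ = \tilde{\varphi} - P_L(\varphi)\tau$; this measure is a Gibbs measure and the unique equilibrium state for $\tilde{f}$. For parts (2) and (3), I would work with the normalized transfer operator $\mathcal{L}_{\varphi^+}$ acting on a space of locally H\"older functions on $S^{\mathbb{Z}}$. Standard arguments (Sarig \cite{Sar03}, Aaronson--Denker \cite{AaD}) give a spectral gap: a simple leading eigenvalue at $1$ separated from the rest of the spectrum by some gap $\kappa<1$. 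Exponential decay of correlations for $h_2\in C^{\alpha}(\tilde{X})$ against $h_1\in L^\infty(\tilde{X},\nu_{\varphi^+})$ follows from expressing $\mathrm{Cor}_n(h_1\circ\tilde{f}^n,h_2) = \int h_1\,\mathcal{L}_{\varphi^+}^n(h_2\rho)\,d\nu_{\varphi^+} - (\int h_1\,d\nu_{\varphi^+})(\int h_2\,d\nu_{\varphi^+})$ where $\rho$ is the density of $\nu_{\varphi^+}$ with respect to the conformal measure and then applying the spectral gap to $\mathcal{L}_{\varphi^+}^n$. The CLT in (3) follows from the same spectral gap via the Nagaev--Guivarc'h perturbation argument applied to the family $\mathcal{L}_{\varphi^+,t}(g) = \mathcal{L}_{\varphi^+}(e^{ith}g)$, which gives analyticity of the leading eigenvalue in $t$ and hence the Gaussian behavior of Birkhoff sums of mean-zero H\"older observables.

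For part (4), I would simply apply Theorem~A from the introduction to the state space $S$, the full shift $\sigma$, and the locally H\"older potential $\tilde{\varphi}\circ\pi$. Since the full shift on $S$ is topologically mixing and the unique equilibrium measure $\nu_{\varphi^+}$ is already mixing (this is a consequence of the spectral gap, or equivalently the fact that the tail $\sigma$-algebra is trivial for Gibbs measures of locally H\"older potentials on the full shift), Theorem~A gives the Bernoulli property of $(\tilde{X},\tilde{f},\nu_{\varphi^+})$ as an immediate corollary. The main obstacle here is not in applying Theorem~A itself but in the proof of Theorem~A, which I would address separately: the difficulty is that classical results such as \cite{SarBernoulli} assume the potential is locally H\"older on the \emph{full} two-sided shift, whereas here we only control $\tilde{\varphi}$ on the cylinder $[s]$ via the induced shift $\bar\sigma$. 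The plan to overcome this is to pass to a suitable inducing scheme for the symbolic system and use Ornstein's very weak Bernoulli (VWB) criterion, checking VWB on a generating partition by exploiting the exponential mixing on the induced system and the finiteness of return times in $\bar\sigma$-measure.
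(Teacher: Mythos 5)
Your treatment of (1)--(3) coincides with the paper's. Statement (1) is indeed quoted from \cite{ind}, and the paper disposes of (2) and (3) by citing Ruelle \cite{Rue}; the content of that citation is precisely the spectral-gap/Nagaev--Guivarc'h mechanism you sketch for the normalized transfer operator on the full two-sided shift (where, as you note, BIP is automatic and \textbf{(I3)} makes the coding a measure-theoretic isomorphism). The only genuine divergence is in (4). The paper does not route this through Theorem~A: it cites Daon \cite{Daon} (see also \cite{SarBernoulli}), i.e.\ the classical Bernoullicity theorem for equilibrium states of locally H\"older potentials on topologically mixing countable Markov shifts, which applies directly here because by \textbf{(P2)} the potential $\varphi^{+}\circ\pi$ is locally H\"older on the \emph{whole} full shift $S^{\mathbb{Z}}$, not merely on a cylinder via an induced shift. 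Your detour through Theorem~A is logically admissible (the full shift is mixing, $\nu_{\varphi^{+}}$ is mixing by the spectral gap, and Theorem~A is proved later independently of this theorem), but it is heavier than necessary: the extra generality of Theorem~A --- requiring only the \emph{induced} potential to be locally H\"older --- is exactly what is needed for the tower map in the later theorems, whereas for the induced system the classical hypotheses are already satisfied. If you do insist on Theorem~A, you must additionally check that the equilibrium measure of the potential induced on the chosen cylinder $[s]$ lifts back to $\nu_{\varphi^{+}}$ (an Abramov/Kac-type step that the citation-based route avoids). Your side remark that \textbf{(P4)} together with the Gibbs property forces an exponential tail is correct but is not needed for this theorem.
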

Statement (1) of Theorem \ref{thm: induced} is proved in \cite{ind}. 
We note that similar results in the symbolic setting were obtained by Mauldin-Urbanski \cite{MaulUrb} and Sarig \cite{Sar03}.
Statements (2) and (3) follow from the well known result by Ruelle \cite{Rue}.
Statement (4) follows from the result in \cite{Daon} (see also \cite{SarBernoulli}).

We now state a similar result for the associated tower map $(\hat{X},\hat{f}, \hat{\varphi})$.

\begin{theorem}\label{thm: tower} Let $\hat{f}:\hat{X} \to \hat{X}$ be the tower map corresponding to an inducing scheme satisfying Conditions \textbf{(I3)} and \textbf{(I4)}. Let 
$\hat{\varphi}: \hat{X} \to \mathbb{R}$ correspond to some potential $\varphi: X \to \mathbb{R}$ satisfying Conditions $\textbf{(P1)-(P4)}$. Then: 
\begin{enumerate}
\item there exists a unique $\hat{f}$-invariant ergodic equilibrium measure $\mu_{\hat{\varphi}}$ for $\hat{\varphi}$ and it is the lifted measure for $\nu_{\varphi^+}.$\\
Assume, in addition, that the tower satisfies Condition \textbf{(I5)}.
\item  If $(\hat{X},\hat{f},\mu_{\hat{\varphi}})$ is mixing,
then it has the Bernoulli property.
\item For $\hat{h}_1, \hat{h}_2\in C^{\alpha}(\hat{X})$ one has:\\
 (a) If $\nu_{\varphi^{+}}(\tau>n)=\mathcal{O}(\theta^n)$ for $0<\theta<1,$ then $\text{Cor}_n(\hat{h}_1\circ\hat{f}^n,\hat{h}_2)$ decays exponentially;\\
 (b) If $\nu_{\varphi^{+}}(\tau>n)=\mathcal{O}(\frac{1}{n^{\beta}}), \beta>1,$ then
$\text{Cor}_n(\hat{h}_1\circ\hat{f}^n,\hat{h}_2)=\mathcal{O}(\frac{1}{n^{\beta-1}})$; \\
  (c) If $\nu_{\varphi^{+}}(\tau>n)=\mathcal{O}(\frac{1}{n^{\beta}}), \beta>1$ and 
$\hat{h}_1,\ \hat{h}_2 $ supported on $\bigcup_{j=0}^{k}\tilde{X}\times\{j\}$ ($\tilde{X}=\tilde{X}\times\{0\}$) for some $k,$ then
\begin{equation}\label{Gouzel}
\text{Cor}_n(\hat{h}_1\circ\hat{f}^n,\hat{h}_2)=\sum_{N>n}^{\infty}\nu_{\varphi^{+}}( \tau(x)>N)\int_{\hat{X}}\hat{h}_1\,d\mu_{\hat{\varphi}}\int_{\hat{X}}\hat{h}_2\,d\mu_{\hat{\varphi}}+r_{\beta}(n),
\end{equation}
where $r_{\beta}(n)=\mathcal{O}(R_{\beta}(n))$ and 
$$
R_{\beta}(n)=
\begin{cases}
\frac{1}{n^{\beta}}&  \text{if } \beta>2,\\ 
\frac{\log n}{n^2} &  \text{if } \beta=2, \\ 
\frac{1}{n^{2\beta-2}}& \text{if } 1<\beta<2.
\end{cases}  
$$ 
Moreover, if $\int_{\hat{Y}}\hat{h}_2=0$, then 
$\text{Cor}_n(\hat{h}_1\circ\hat{f}^n,\hat{h}_2)=\mathcal{O}(\frac1{n^{\beta}})$;\\

(d)  If $\nu_{\varphi^{+}}(\tau>n)=\mathcal{O}(\frac{1}{n^{\beta}}), \beta>1$ and 
$\hat{h}_1,\hat{h}_2 $ supported on $\bigcup_{j=0}^{k}\tilde{X}\times\{j\}$ ($\tilde{X}=\tilde{X}\times\{0\}$) for some $k,$ then $\hat{f}$ satisfies the CLT with respect to $\mu_{\hat{\varphi}}.$
\end{enumerate}
\end{theorem}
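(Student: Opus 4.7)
The plan is to prove the four assertions of Theorem \ref{thm: tower} in sequence, leveraging Theorem \ref{thm: induced} (already established for the induced system) together with the bijection between $\M(\tilde{X},\tilde{f})$ and $\M_L(\hat{X},\hat{f})$ recorded in Section \ref{sec:relations}. For part (1) I would simply \emph{define} $\mu_{\hat{\varphi}} := \hat{\mathcal{L}}(\nu_{\varphi^{+}})$. The finiteness $Q_{\nu_{\varphi^{+}}} = \int \tau\, d\nu_{\varphi^{+}} < \infty$ follows from Condition \textbf{(P4)} combined with the Gibbs property $\nu_{\varphi^{+}}(J) \asymp \exp\sup_{J} \varphi^{+}$. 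Next, by unfolding the definition of $\hat{\mathcal{L}}$ and applying Abramov's formula, one verifies, for every $\hat{\mu} = \hat{\mathcal{L}}(\tilde{\mu}) \in \M_L(\hat{X},\hat{f})$, the identity
\begin{equation*}
h_{\hat{\mu}}(\hat{f}) + \int_{\hat{X}} \hat{\varphi}\, d\hat{\mu} - P_L(\varphi) \;=\; \frac{1}{Q_{\tilde{\mu}}} \left( h_{\tilde{\mu}}(\tilde{f}) + \int_{\tilde{X}} \varphi^{+}\, d\tilde{\mu} \right).
\end{equation*}
Because $\nu_{\varphi^{+}}$ is the unique equilibrium for $\varphi^{+}$ by Theorem \ref{thm: induced}(1), and the correspondence $\tilde{\mu} \leftrightarrow \hat{\mu}$ preserves ergodicity, $\hat{\mathcal{L}}(\nu_{\varphi^{+}})$ is the unique equilibrium for $\hat{\varphi}$ in $\M_L(\hat{X},\hat{f})$, and $P_L(\hat{\varphi}) = P_L(\varphi)$.

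For part (2), the approach is to represent $(\hat{X},\hat{f},\mu_{\hat{\varphi}})$ as a topologically mixing countable Markov shift and then invoke Theorem A. Introduce the state space $\hat{S} := \{(J,k) : J \in S,\ 0 \le k < \tau(J)\}$, with Markov transitions that increment $k$ within each column and, at the top $(J, \tau(J)-1)$, move to any $(J',0)$ compatible with the shift on $S^{\mathbb{Z}}$. Fixing a base state $s_0 := (J_0, 0)$, the first-return map to the cylinder $[s_0]$ is conjugate to $\tilde{f}$ on $J_0$, and the induced first-return potential coincides with $\tilde{\varphi}$ up to a coboundary, hence is locally H\"older by \textbf{(P2)}; finiteness of the Gurevich pressure of this induced potential follows from \textbf{(P3)}. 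The arithmetic condition \textbf{(I5)} together with \textbf{(I4)} ensures topological mixing of the extended shift. Under the standing mixing hypothesis on $(\hat{X},\hat{f},\mu_{\hat{\varphi}})$, Theorem A then yields the Bernoulli property.

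For parts (3)(a)--(d) the plan is to apply existing results to the tower equipped with $\mu_{\hat{\varphi}}$. The Gibbs property of $\nu_{\varphi^{+}}$ provides the quasi-H\"older regularity required by Young's framework, so the assumed tail decay $\nu_{\varphi^{+}}(\tau > n) = \mathcal{O}(\theta^n)$ or $\mathcal{O}(n^{-\beta})$ transfers into the bounds of \cite{you2}, giving (3)(a) and (3)(b). The refined asymptotic expansion (3)(c) is obtained from Gou\"ezel's result in \cite{Gou}, the support hypothesis on $\hat{h}_1, \hat{h}_2$ being precisely the ``bounded height'' setting that lets the observables descend to functions on finitely many levels of the tower. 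The CLT in (3)(d), under the same support condition, is Melbourne's theorem in \cite{mel}.

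The main obstacle is the matching step in part (2): one must verify carefully that the symbolic coding constructed from the inducing scheme satisfies all the hypotheses of Theorem A---local H\"older regularity of the induced potential on $[s_0]$, finiteness of the Gurevich pressure, topological mixing of the shift---and, crucially, translate the given dynamical mixing of $(\hat{X},\hat{f},\mu_{\hat{\varphi}})$ into the measure-theoretic mixing of the corresponding shift system on $\hat{S}^{\mathbb{Z}}$. The remaining arguments are essentially bookkeeping once the bijection between measures on the three spaces is firmly in place.
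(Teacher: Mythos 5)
Your architecture matches the paper's: part (1) via the Abramov-type identity and the bijection $\tilde{\mu}\leftrightarrow\hat{\mu}$, part (2) via symbolic coding of the tower and the symbolic Bernoulli theorem, part (3) via the tower decay-of-correlations results of \cite{you2} and \cite{mel}. However, for part (2) you should be aware that invoking Theorem A as a black box leaves the core of the statement unaddressed: in the paper, Theorem A is \emph{proved inside} the proof of this theorem, and that proof is the substantive content of statement (2). Concretely, one must verify that the equilibrium measure $\nu_{\varphi^{+}}$ satisfies Conditions \textbf{(S1)} and \textbf{(S2)} of Theorem \ref{thm:generalcase}; the paper does this with Sarig's Ruelle-operator representation of $\nu_{\varphi^{+}}$ (the limit $\mu([A])=\lim L^{n}\chi_{[A]}$, the duality $\int F(G\circ\bar{\sigma}^{n})d\mu=\int(L^{n}F)G\,d\mu$, and three lemmas estimating $\sum_{M\le k}L^{M+m}(\cdots)$ up to factors $e^{\pm\delta}$). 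Your reduction to the coding is correct but the Markov-shift machinery that makes Theorem A applicable to this particular $\mu_{\hat{\varphi}}$ is exactly what needs to be supplied.

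There is also a concrete gap in part (3). The hypothesis that lets one transfer the tail estimates into the correlation bounds of \cite{you2} and \cite{mel} is not the Gibbs property but local H\"older continuity of the log-Jacobian of $\nu_{\varphi^{+}}$ with respect to $\tilde{f}$, i.e.\ the distortion bound $\bigl|\frac{g_{\nu_{\varphi^{+}}}(x)}{g_{\nu_{\varphi^{+}}}(y)}-1\bigr|\le C\rho^{s(\tilde{f}x,\tilde{f}y)}$ of (\ref{holderdistorsion}) in Proposition \ref{noequilibrium}. The Gibbs inequality only controls $\nu_{\varphi^{+}}$ on cylinders up to a uniform multiplicative constant $C$, which is far weaker and does not by itself yield a distortion estimate decaying in the separation time; so the sentence ``the Gibbs property of $\nu_{\varphi^{+}}$ provides the quasi-H\"older regularity required by Young's framework'' does not hold as stated. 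The paper closes this by writing $d\nu_{\varphi^{+}}=h\,d\nu_{c}$ with $\nu_{c}$ the conformal eigenmeasure of $L_{\varphi^{+}}$, using $\varphi^{+}=\log g_{\nu_{c}}$ (positive recurrence and $P_G(\varphi^{+})=0$), and the variation estimate $Var_{n}h\le\sum_{l\ge n+1}Var_{l}(\varphi^{+})$ to conclude that $\log g_{\nu_{\varphi^{+}}}=\log g_{\nu_{c}}+\log h-\log h\circ\hat{f}$ is locally H\"older by \textbf{(P2)}. You would need this (or an equivalent) argument before citing the transfer results; once it is in place, (3)(a)--(d) do follow as you describe.
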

We remark that Statements (3a)-(3d) of Theorem \ref{thm: tower} were shown in \cite{you2} for towers corresponding to inducing schemes of expanding type (that is, towers which can be modeled by one-sided Markov shifts on a finite or countable set of states) and measures satisfying certain conditions. Those results were later extended in \cite{mel} to the case of inducing schemes of hyperbolic type. We summarize those results in Proposition \ref{noequilibrium}. Then we show that local H\"older continuity of the induced potential $\tilde{\varphi}$ and the fact that $\mu_{\hat{\varphi}}$ is obtained by lifting the measure $\nu_{\varphi^{+}}$ ensures that we can apply Proposition \ref{noequilibrium}.\\

We now consider the original system $(X,f, \varphi)$. For $x,y\in \tilde{X}$ define the \textit{forward separation time} $s^+(x,y)$ to be the smallest positive integer $n\geq 0$ for which $\tilde{f}^n(x),\tilde{f}^n(y)$ lie in different partition elements of $\tilde{X}$ (compare with the quantity $s(x,y)$ defined in Section \ref{def: induced}). We define the \textit{backward separation time} $s^-(x,y)$ analogously, so that $s(x,y) = \min\{ s^+(x,y), s^-(x,y)\}$.

\begin{theorem}\label{thm: manifold}
Let $f: X \to X $ be a continuous map that admits an inducing scheme satisfying Conditions 
$\textbf{(I1)-(I4)}.$ Let  $\varphi : X \to \mathbb{R}$ be a potential that satisfies Conditions 
$\textbf{(P1)-(P4)}$. Then:
\begin{enumerate} 
\item there exists a unique $f$-invariant ergodic equilibrium measure $\mu_{\varphi}$ for 
$\varphi$ and it is the lifted measure for $\nu_{\varphi^+}.$ 
\item  $\mu_{\varphi}$ is the projection of $\mu_{\hat{\varphi}}$.\\

\noindent If the inducing scheme satisfies Condition \textbf{(I5)}, then:\\ 

\item If $(X,f,\mu_{\varphi})$ is mixing, then it has the Bernoulli property.
\item Assume, in addition, that $(X,f,\mu_{\varphi})$  satisfies the following conditions:\\
\textbf{(F1)} there exist $C\geq 1$ and $ 0<\gamma <1$ such that if $s^+(x,y)=\infty$, then 
$d(\tilde{f}^nx,\tilde{f}^ny)\leq C \gamma^n $, and if $s^-(x,y)=\infty$, then 
$d(\tilde{f}^nx,\tilde{f}^ny)\ge C \gamma^{(s(x,y)-n)}$.\\
\textbf{(F2)} there exist $C\geq 1$ such that for all $x,y\in \tilde{X}$ and $k< \min\{\tau(x),\tau(y)\},$ \\
$d(f^k(x), f^k(y))\le C\max\{d(\tilde{f}x, \tilde{f}y),d(x,y)\}$.  \\
Then for $h_1,  h_2\in C^{\alpha}(X)$ one has: 
\begin{itemize}
\item[(a)]  If $\nu_{\varphi^{+}}(\tau>n)=\mathcal{O}(\theta^n)$ for $0<\theta<1$ then $\text{Cor}_n(h_1\circ f^n, h_2)$ decays exponentially;

\item[(b)]  If $\nu_{\varphi^{+}}(\tau>n)=\mathcal{O}(\frac{1}{n^{\beta}}),\  \beta>1,$ then
$\text{Cor}_n(h_1\circ f^n, h_2)=\mathcal{O}(\frac{1}{n^{\beta-1}})$; 

\item[(c)] If $\nu_{\varphi^{+}}(\tau>n)=\mathcal{O}(\frac{1}{n^{\beta}}), \beta>1,$ then there exists a sequence of nested sets $Y_0\subset Y_1\subset\cdots$ in $X$ such that if $h_1, h_2$ are supported inside $Y_k$ for some $k\ge 0,$ then
\begin{equation}\label{Gouzel1}
\text{Cor}_n(h_1\circ f^n,h_2)=
\sum_{N>n}^{\infty}\nu_{\varphi^{+}}(\tau(x)>N)\int_{X}h_1\,d\mu_{\varphi}\int_{X}h_2\,d\mu_{\varphi}
+r_{\beta}(n),
\end{equation}
where $r_{\beta}(n)=\mathcal{O}(R_{\beta}(n))$ and 
$$
R_{\beta}(n)=
\begin{cases}
\frac{1}{n^{\beta}}&  \text{if } \beta>2,\\ 
\frac{\log n}{n^2} &  \text{if } \beta=2, \\ 
\frac{1}{n^{2\beta-2}}& \text{if } 1<\beta<2.
\end{cases}  
$$ 
Moreover, if $\int_{Y}h_2=0$, then 
$\text{Cor}_n(h_1\circ f^n,h_2)=\mathcal{O}(\frac1{n^{\beta}})$;
\item[(d)]  If $\nu_{\varphi^{+}}(\tau>n)=\mathcal{O}(\frac{1}{n^{\beta}}), \beta>1$ and 
 $h_1,\ h_2$ are supported inside $Y_k$ for some $k\ge 0,$ then $f$ satisfies the CLT with respect to $\mu_{\varphi}.$
 \end{itemize}
\end{enumerate}
\end{theorem}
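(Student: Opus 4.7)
\emph{Parts (1) and (2).} The Abramov--Kac identities for an inducing scheme give, for any $\tilde\mu\in\mathcal M(\tilde X,\tilde f)$ with $Q_{\tilde\mu}<\infty$ and $\mu=\mathcal L(\tilde\mu)$,
\begin{equation*}
h_\mu(f)+\int\varphi\,d\mu \;=\;\frac{1}{Q_{\tilde\mu}}\Bigl(h_{\tilde\mu}(\tilde f)+\int\tilde\varphi\,d\tilde\mu\Bigr)
\;=\;P_L(\varphi)+\frac{1}{Q_{\tilde\mu}}\Bigl(h_{\tilde\mu}(\tilde f)+\int\varphi^+\,d\tilde\mu\Bigr).
\end{equation*}
Hence $\mu$ achieves the lifted pressure $P_L(\varphi)$ iff $\tilde\mu$ is an equilibrium measure for $\varphi^+$, which by Theorem~\ref{thm: induced}(1) is uniquely $\nu_{\varphi^+}$. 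So $\mu_\varphi:=\mathcal L(\nu_{\varphi^+})$ exists and is the unique equilibrium measure in $\mathcal M_L(X,f)$, proving (1). For (2), Theorem~\ref{thm: tower}(1) gives $\mu_{\hat\varphi}=\hat{\mathcal L}(\nu_{\varphi^+})$, and tracing the definitions in Section~\ref{sec:relations} yields $\mathcal R(\mu_{\hat\varphi})=\mathcal L(\nu_{\varphi^+})=\mu_\varphi$.

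\emph{Part (3).} The map $\pi_0$ is a measure-preserving factor from $(\hat X,\hat f,\mu_{\hat\varphi})$ onto $(X,f,\mu_\varphi)$ intertwining the two dynamics. Since factors of Bernoulli systems are Bernoulli (Ornstein), it suffices to show that $\hat f$ is Bernoulli, and by Theorem~\ref{thm: tower}(2) this reduces to verifying that $(\hat X,\hat f,\mu_{\hat\varphi})$ is mixing. The plan is to deduce mixing of $\hat f$ from mixing of $f$: any period of $\hat f$ descends under $\pi_0$ to a period of $f$, so mixing of $f$ forces the induced system $(\tilde X,\tilde f,\nu_{\varphi^+})$ to be mixing, and then condition \textbf{(I5)} removes the remaining arithmetic obstruction to mixing of the associated Kakutani skyscraper.

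\emph{Part (4).} For $h_1,h_2\in C^\alpha(X)$ set $\hat h_i:=h_i\circ\pi_0$ on $\hat X$. Since $\pi_0$ is measure-preserving and intertwines the dynamics, integrals, correlations, and Birkhoff sums all transfer:
\[
\int_X h_i\,d\mu_\varphi=\int_{\hat X}\hat h_i\,d\mu_{\hat\varphi},\qquad \text{Cor}_n^f(h_1\circ f^n,h_2)=\text{Cor}_n^{\hat f}(\hat h_1\circ\hat f^n,\hat h_2),
\]
so the CLT on $\hat X$ passes to $X$. To apply Theorem~\ref{thm: tower}(3) we must show $\hat h_i\in C^\alpha(\hat X,d_\theta)$ and define nested sets $Y_k\subset X$ so that $\text{supp}\,h_i\subset Y_k$ forces $\text{supp}\,\hat h_i\subset\bigcup_{j=0}^k\tilde X\times\{j\}$; for the latter we take $Y_k:=\{y\in X:\pi_0^{-1}(y)\subset\bigcup_{j=0}^k\tilde X\times\{j\}\}$, a nested family exhausting the $\mu_\varphi$-full-measure set $\pi_0(\hat X)$. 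For the H\"older bound, on a common level $j\le k$ we have $|\hat h_i(x,j)-\hat h_i(y,j)|\le[h_i]_\alpha\, d(f^jx,f^jy)^\alpha$; condition \textbf{(F2)} bounds the right-hand side by a multiple of $\max\{d(x,y),d(\tilde fx,\tilde fy)\}^\alpha$, and \textbf{(F1)}, combined with the local product structure of $\tilde X$, controls those two distances by $C\gamma^{\alpha s(x,y)}$. This yields the required H\"older bound, and the conclusions (a)--(d) of Theorem~\ref{thm: tower}(3) applied to $(\hat h_1,\hat h_2)$ translate verbatim back to $(h_1,h_2)$.

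\emph{Main obstacle.} The crux is the H\"older-regularity transfer in part (4): \textbf{(F1)} as stated only controls distances when $s^+$ or $s^-$ is infinite, so for a pair with finite separation time $s(x,y)=n$ one must insert an intermediate point $z\in\tilde X$ with $s^+(x,z)=\infty=s^-(z,y)$ via the local product structure, apply both halves of \textbf{(F1)}, and combine them with constants uniform over the partition. The mixing-from-base argument needed for part (3) is the other delicate step, since a priori a factor need not inherit mixing properties from the base.
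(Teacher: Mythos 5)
Parts (1) and (2) of your proposal follow the same route as the paper (the Abramov--Kac reduction for (1) is exactly the argument the paper outsources to \cite{ind}, and (2) is the same bookkeeping), and your final step for (3) --- Bernoullicity descends from the tower to $X$ through the factor map --- is also how the paper concludes, via the equivalence-relation construction at the end of Section \ref{proof:thm:generalcase}. The problems are in how you propose to get onto the tower in parts (3) and (4).

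For part (3) the gap is the step ``mixing of $(X,f,\mu_\varphi)$ plus \textbf{(I5)} implies mixing of $(\hat X,\hat f,\mu_{\hat\varphi})$.'' Mixing does not pass from a factor to an extension, and for a Kakutani skyscraper the arithmetic condition $\gcd\{\tau(J)\}=1$ only kills the obvious periodic obstruction; measure-theoretic mixing of the tower can still fail through a cohomological obstruction (an eigenfunction built from $g$ on the base with $g\circ\tilde f/g=\lambda^{\tau}$), and nothing in your sketch rules this out. Indeed the paper deliberately keeps ``mixing'' as a hypothesis in both Theorem \ref{thm: tower}(2) and Theorem \ref{thm: manifold}(3) rather than deriving one from the other; its actual proof of (3) bypasses your reduction entirely by verifying Conditions \textbf{(B1)}--\textbf{(B2)} for $\nu_{\varphi^+}$ (via the Ruelle operator, in the proof of Theorem \ref{thm: tower}) and invoking Theorem \ref{thm:generalcase}, whose conclusion covers $(X,f,\mu_\varphi)$ and $(\hat X,\hat f,\mu_{\hat\varphi})$ simultaneously.

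For part (4) the gap is the claimed transfer $h\in C^\alpha(X)\Rightarrow h\circ\pi_0\in C^\alpha(\hat X,d_\theta)$. Conditions \textbf{(F1)}--\textbf{(F2)} only control $d(f^kx,f^ky)$ through $d(x,y)$ and $d(\tilde fx,\tilde fy)$, and \textbf{(F1)} says nothing about pairs with finite separation time; your fix --- inserting an intermediate point $z$ with $s^+(x,z)=s^-(z,y)=\infty$ --- uses a local product structure on $\tilde X$ that is part of the Young-diffeomorphism application (Section \ref{sec:appl}) but is not available in the abstract inducing-scheme setting of this theorem. Moreover, even granting the transfer, statement (3c) of Theorem \ref{thm: tower} only yields the refined asymptotics for observables supported on finitely many levels, so one cannot get (4a)--(4b) for arbitrary H\"older $h_i$ this way without a separate truncation argument. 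The paper avoids all of this: it quotes Proposition \ref{noequilibrium} (i.e.\ the results of \cite{you2} and \cite{mel}, which are formulated directly for observables on $X$ under \textbf{(F1)}--\textbf{(F2)}), checks the distortion hypothesis \eqref{holderdistorsion} for $\nu_{\varphi^+}$ using Ruelle--Perron--Frobenius theory, and for (4c)--(4d) simply takes $Y_k=\bigcup_{j=0}^k f^j(\tilde X)$. So the overall architecture of your part (4) is reasonable, but the analytic core that you sketch in one sentence is precisely what the cited references supply and what your hypotheses do not let you reprove.
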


Statement (1) of Theorem \ref{thm: manifold} is shown in \cite{ind}. Note that Statement (4a) was mentioned in Theorem 4.5 in \cite{ind} with reference to \cite{you}. However, Conditions \textbf{(I5)} and \textbf{(F1)-(F2)} were missing there, so Statement (4a) complements the part of Theorem 4.5 in \cite{ind} on exponential decay of correlations.
The remaining statements  are shown in Section \ref{proof:thm:manifold}.\\

\subsection{Analyticity of the pressure}

Let $f: X \to X $ be a continuous map that admits an inducing scheme satisfying Conditions $\textbf{(I1)-(I5)}$, $\tilde{f}: \tilde{X} \to \tilde{X}$ the induced map, and $\hat{f}:\hat{X} \to \hat{X}$ the tower map. 
Let $\varphi_1, ~\varphi_2 : X \to \mathbb{R}$ be two potential functions satisfying Condition $\textbf{(P1)}$.
We consider the corresponding potentials $\hat{\varphi}_1(x,k):= \varphi_1 ( f^k(x)),$ $\hat{\varphi}_2(x,k):= \varphi_2 ( f^k(x))$ on the tower $\hat{X}$.
We also consider
the induced potential 
$ \tilde{\varphi}_2(x):= \sum_{k=0}^{\tau(x)-1}\varphi_2(f^k(x)),  $
and the normalized induced potential
$ \varphi_1^+:= \tilde{\varphi}_1 - P_L(\varphi_1)\tau$ on $\tilde{X}$.

We have the following.

\begin{theorem}\label{thm:pressure}
Assume there exists $\epsilon_0>0$ such that the following holds for all $|t|<\epsilon_0$:

\begin{enumerate}
\item there exists $c\in \mathbb{R}$ such that $\varphi_1 + c$ and $\varphi_1 + t \varphi_2 + c$ satisfy $\textbf{(P3)}$,\\

\item  $\varphi_1 $ and $\varphi_1 + t |\varphi_2| $ satisfy $\textbf{(P4)}$, \\

\item  $\varphi_1 $ and $\varphi_2 $ satisfy $\textbf{(P2)}$. 
\end{enumerate}

Then for some $0<\epsilon<\epsilon_0$ the functions $t \to P_G(\varphi_1^+ + t \tilde{\varphi}_2 )$, $t \to P_L(\hat{\varphi}_1 + t \hat{\varphi}_2 )$, and $t \to P_L(\varphi_1 + t \varphi_2 )$ are real analytic on $(-\epsilon,\epsilon)$.
\end{theorem}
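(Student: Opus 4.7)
The plan is to prove the theorem in three stages: first, establish real analyticity of the Gurevich pressure $t \mapsto P_G(\varphi_1^+ + t\tilde{\varphi}_2)$ via a transfer-operator argument following Sarig \cite{Sar00}; second, promote this to analyticity of $t \mapsto P_L(\varphi_1 + t\varphi_2)$ through the implicit function theorem applied to the defining relation $P_G(\tilde{\varphi} - P_L(\varphi)\tau)=0$; third, identify the tower pressure with the base pressure using Abramov's formula and the bijection between $\M_L(X,f)$ and $\M_L(\hat{X},\hat{f})$.

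For the symbolic stage, I pass to the one-sided countable Markov shift coding the induced map (this is a full shift, so the big-images-and-preimages property is automatic) and consider, for $|t|<\epsilon_0$, the family of Ruelle--Perron--Frobenius operators $L_{\psi_t}$ with $\psi_t := \varphi_1^+ + t\tilde{\varphi}_2$ acting on a suitable Banach space of locally H\"older observables. Conditions (2) and (3) guarantee that the $\psi_t$ are locally H\"older continuous with a constant uniform in $t$; conditions (1) and (2) together imply that \textbf{(P3)} and \textbf{(P4)} hold uniformly for the family (the role of $|\varphi_2|$ in (2) is to ensure a uniform exponential tail, and the role of $c$ in (1) is to keep $\sum_{J}\sup\exp\psi_t<\infty$). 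By the spectral theory of Ruelle--Perron--Frobenius operators for strongly positive recurrent potentials, $L_{\psi_t}$ has a simple isolated leading eigenvalue equal to $e^{P_G(\psi_t)}$ separated from the rest of the spectrum by a gap that is uniform in $t$. Since $t\mapsto L_{\psi_t}$ depends real analytically on $t$ (the dependence enters through the multiplier $e^{t\tilde{\varphi}_2}$), analytic perturbation theory for isolated simple eigenvalues yields real analyticity of the leading eigenvalue, and hence of $t\mapsto P_G(\psi_t)$.

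For the implicit-function stage I introduce
$$ F(t,s) := P_G\bigl(\tilde{\varphi}_1 + t\tilde{\varphi}_2 - s\tau\bigr). $$
The same transfer-operator argument, carried out jointly in $(t,s)$, shows that $F$ is real analytic on a neighborhood of $(0,P_L(\varphi_1))$; the identity $\varphi^+=\tilde{\varphi}-P_L(\varphi)\tau$ combined with the variational characterization of $P_L$ on the induced system shows that $s(t):=P_L(\varphi_1+t\varphi_2)$ is the unique zero of $s\mapsto F(t,s)$. Differentiation of the Gurevich pressure with respect to $s$ yields
$$ \partial_s F(t,s) = -\int_{\tilde{X}}\tau\,d\nu_{t,s} = -Q_{\nu_{t,s}} < 0, $$
where $\nu_{t,s}$ is the unique equilibrium Gibbs measure at parameters $(t,s)$. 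The implicit function theorem thus gives real analyticity of $s(t)$ on a neighborhood of $0$. Finally the bijection $\hat{\mathcal{L}}$ combined with Abramov's formula and $\int\hat{\varphi}\,d\hat{\mu}=\int\varphi\,d\mu$ gives
$$ h_{\hat{\mu}}(\hat{f})+\int\hat{\varphi}\,d\hat{\mu} = h_{\mu}(f)+\int\varphi\,d\mu $$
for every lift/project pair $(\hat{\mu},\mu)$; taking suprema over $\M_L$ gives $P_L(\hat{\varphi}_1+t\hat{\varphi}_2)=P_L(\varphi_1+t\varphi_2)$, completing the proof.

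The main obstacle is the symbolic stage: one must choose a Banach space of locally H\"older observables in which simultaneously (a) the transfer operator $L_{\varphi_1^+}$ has a spectral gap, (b) the multiplier $e^{t\tilde{\varphi}_2}$ acts as a bounded operator with real analytic dependence on $t$, and (c) the summability hypotheses \textbf{(P3)}--\textbf{(P4)} survive uniformly for $|t|<\epsilon_0$. The form of assumptions (1)--(3) is tailored precisely to make these three demands compatible; in particular the constant $c$ in (1) absorbs the shift of normalization needed for the perturbed potential, while the absolute value in (2) controls both signs of the perturbation. Once this stage is in place, the remaining steps are essentially bookkeeping with standard facts about Gurevich pressure and Abramov's formula.
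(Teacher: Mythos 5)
Your proposal is correct and follows essentially the same route as the paper: a transfer-operator/spectral-gap argument with analytic perturbation theory for the Gurevich pressure, the implicit characterization $P_G(\tilde{\varphi}_1+t\tilde{\varphi}_2-s\tau)=0$ plus the implicit function theorem for $P_L(\varphi_1+t\varphi_2)$, and the Abramov-type identification $P_L(\hat{\varphi})=P_L(\varphi)$. The only step you gloss over is that the induced system is coded by a \emph{two-sided} full shift (the scheme is of hyperbolic type), so before applying the one-sided Ruelle operator one must replace the potentials by cohomologous ones depending only on future coordinates and check that this preserves the summability hypotheses — the paper does this in a separate lemma via the standard Sinai coboundary trick.
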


Here $P_G$ denotes the Gurevich pressure which we define in Section \ref{sec:prel}, while $P_L$ was defined in Section \ref{sec:eqmeasures}.
For the induced system the fact that  $t \to P_G(\varphi_1^+ + t \tilde{\varphi}_2 )$ is real analytic
follows from a result of Sarig \cite{Sar00}. Analyticity of the functions  $t \to P_L(\hat{\varphi}_1 + t \hat{\varphi}_2 )$ and $t \to P_L(\varphi_1 + t \varphi_2 )$ are shown in Section \ref{proof:thm:pressure}.

\subsection{Generalization}
Our results on the Bernoulli property are corollaries of a more general result, 
where measures in question are not necessarily equilibrium measures.
Let $\tilde{\mu}$ be an invariant ergodic measure for the induced map $\tilde{f}$ on $\tilde{X}$. Let $\mu$ and $\hat{\mu}$ be the corresponding lifted measures on $X$ and $\hat{X}$ respectively. Note that $\mu$ and $\hat{\mu}$ are invariant and ergodic with respect to $f$ and $\hat{f}.$ 

Given a string $(\omega_1 \ldots \omega_n) \in \mathbb{N}^{n}$ we denote
$$   J_{\omega_1 \ldots \omega_n}:= \{ x\in \tilde{X} | \tilde{f}^{k-1}(x) \in J_{\omega_k} \text{ for }k=1,\ldots,n    \}   .$$   
Given two strings $A=(a_1\ldots a_{n_1})$, $B=(b_1\ldots b_{n_2})$, $M\in \mathbb{N}$, and $k\geq M$ we define
$$ S(A,B,M,k) := \{ x\in    J_{A} \cap \tilde{f}^{-(n_1+M)}J_{B}  | \tau(\tilde{f}^{n_1}x) + \ldots + \tau(\tilde{f}^{n_1+M-1}x)=k   \}  .  $$

We impose a collection of conditions on $(\tilde{X},\tilde{f},\tilde{\mu}).$

\textbf{(B1)} For every finite sub-collection $S^*\subset S$ there exists a constant $C^*=C^*(S^*)>1$ such that for every
pair of strings $A=(a_1\ldots a_{n_1})$, $B=(b_1\ldots b_{n_2})$,  if $J_{a_{n_1}} \in S^*$ or $J_{b_1}\in S^*$, and $J_{AB}\neq \emptyset$, then
\begin{equation}\label{eqn:directproduct}
\frac{1}{C^*}\leq \frac{\tilde{\mu}(J_{AB})}{\tilde{\mu}(J_{A})\tilde{\mu}(J_{B})}\leq C^*.
\end{equation}

\textbf{(B2)} For any $\delta>0$ there exists $m_0\in\mathbb{N}$ such that for any $m\geq m_0$ and for any finite collection $\gamma$ of $m-$strings
$C=(c_1 \ldots c_m) \in \mathbb{N}^m$ there exists $K\in\mathbb{N}$ such that the following statement holds:

Let $C, C'\in \gamma$, $k>K$, and let $A=(a_1 \ldots a_{n_1})$, $B=(b_1 \ldots b_{n_2})$ be any two strings. Assume that $J_{AC},J_{C'B}\neq \emptyset$. Then
\begin{equation}\frac{e^{-\delta}}{Q_{\tilde{\mu}}}\leq \frac{\sum_{M=1}^{k} \tilde{\mu}   ( S(AC,C'B,M,k) )      }{   \tilde{\mu}(J_{AC})\tilde{\mu}(J_{C'B})   }\leq \frac{e^{\delta}}{Q_{\tilde{\mu}}},
\end{equation}
 where $Q_{\tilde{\mu}}$ is defined by (\ref{def:Qu}).

\begin{remark}
 Requirement \textbf{(B1)} means that the measure $\tilde{\mu}$ has direct product structure. Both Conditions, \textbf{(B1)} and \textbf{(B2)}, are stated in terms of the induced system $(\tilde{X},\tilde{f}),$ but it is worth mentioning that the numerator of the middle term in the inequality in Condition $\textbf{(B2)}$ is equal to $\tilde{\mu}(J_{AC}\cap \hat{f}^{-k}(J_{C'B}))$. 
\end{remark}

\begin{theorem}\label{thm:generalcase}
Let $(\tilde{X},\tilde{f})$ be an induced map and let $\tilde{\mu}$ be an invariant ergodic measure for $\tilde{f}$.
If $(\tilde{X},\tilde{f},\tilde{\mu})$ satisfies Conditions \textbf{(B1)} and \textbf{(B2)},
then the systems $(X,f,\mu)$ and $(\hat{X},\hat{f},\hat{\mu})$ have the Bernoulli property. 
\end{theorem}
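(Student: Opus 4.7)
The plan is to first establish Bernoullicity for the tower system $(\hat{X},\hat{f},\hat{\mu})$ and then transfer it down to $(X,f,\mu)$ by a factor argument. The tower part will be handled by verifying the Very Weak Bernoulli (VWB) property with respect to a generating partition, from which the Bernoulli property follows by Ornstein's theorem. For the base $(X,f,\mu)$, I would use that it is a measure-theoretic factor of $(\hat X,\hat f,\hat\mu)$ via $\pi_0$ (one checks directly from the definitions in Section \ref{sec:relations} that $(\pi_0)_*\hat\mu=\mu$), together with the classical result of Ornstein that factors of Bernoulli shifts are Bernoulli. This mirrors the architecture of Theorem A and of statement (2) of Theorem \ref{thm: tower}, except the Gibbs property used there is replaced by the quantitative product/mixing estimates encoded in (B1)--(B2).

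For the VWB verification on the tower, the natural generating partition is
$$\hat{\xi}=\bigl\{\, J\times\{k\}\,:\, J\in S,\ 0\le k<\tau(J)\,\bigr\},$$
whose refinements under $\hat{f}$ recover the symbolic coding of Proposition \ref{conj} together with the level coordinate. A past atom sitting inside $\tilde X\times\{0\}$ is indexed by a string $AC=(a_1\ldots a_{n_1} c_1\ldots c_m)$, and a future atom by a string $C'B$. As noted in the remark preceding the theorem, $\sum_{M=1}^{k}\tilde{\mu}(S(AC,C'B,M,k))=\tilde{\mu}\bigl(J_{AC}\cap \hat{f}^{-k}(J_{C'B})\bigr)$, so the identification $\hat{\mu}|_{\tilde X\times\{0\}}=\tilde\mu/Q_{\tilde\mu}$ yields
$$\frac{\hat{\mu}\bigl((J_{AC}\times\{0\})\cap \hat{f}^{-k}(J_{C'B}\times\{0\})\bigr)}{\hat{\mu}(J_{AC}\times\{0\})\,\hat{\mu}(J_{C'B}\times\{0\})}=Q_{\tilde\mu}\cdot\frac{\sum_{M=1}^{k}\tilde\mu(S(AC,C'B,M,k))}{\tilde\mu(J_{AC})\,\tilde\mu(J_{C'B})}\in[e^{-\delta},e^{\delta}]$$
whenever $m\ge m_0(\delta)$ and $k\ge K$, by Condition (B2). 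Condition (B1) plays the complementary role of ensuring that the product structure persists under refinement at the boundary symbols of $AC$ and $C'B$, as soon as at least one of the boundary symbols lies in a finite sub-alphabet $S^{\ast}\subset S$ chosen to carry a large proportion of the $\tilde{\mu}$-mass.

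With these estimates, the VWB property is verified via the standard coupling. Given $\epsilon>0$, choose $\delta$, $m_0$, and $S^{\ast}$ so that the ratio above lies in $[e^{-\delta},e^{\delta}]$ on a family of past atoms of $\hat\mu$-mass at least $1-\epsilon$; for any two such good past atoms $A,A'$, couple the conditional distributions on $\bigvee_{i=n}^{n+\ell-1}\hat{f}^{-i}\hat{\xi}$ by transporting each future cylinder to itself with weight equal to the minimum of the two conditional masses, leaving a mass defect of size $O(\delta)$ to be handled by a trivial coupling on the residual. The level coordinate is absorbed by a standard truncation: since $Q_{\tilde\mu}<\infty$, the $\hat\mu$-mass above height $L$ is $O(1/L)$, which is negligible for $L$ large. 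Ornstein's VWB criterion upgrades this to Bernoullicity of $(\hat X,\hat f,\hat\mu)$, and the factor argument completes the proof for $(X,f,\mu)$.

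The main obstacle will be executing this coupling carefully, i.e.\ managing the interaction between the symbolic past, the level coordinate, and the ``boundary'' past atoms. Condition (B2) is only quantitative when the bridge strings $C,C'$ are long and the tower time $k$ is large, so atoms with short bridges or short return time must be controlled separately using (B1) and the finite alphabet $S^{\ast}$, in exact parallel with the boundary analysis in \cite{SarBernoulli} and in the proof of Theorem A above. Once that boundary contribution is absorbed into the $\epsilon$-error and the coupling is built at base level, extending it uniformly over levels by conditioning on the first return time yields the $\bar d$-closeness of conditional distributions required by VWB.
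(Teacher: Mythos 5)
Your overall architecture coincides with the paper's: pass to the symbolic model of the tower, use \textbf{(B1)}--\textbf{(B2)} to control correlations between a past cylinder and a far-future cylinder, invoke Ornstein theory to get Bernoullicity of $(\hat X,\hat f,\hat\mu)$, and then realize $(X,f,\mu)$ as a factor and apply Ornstein's factor theorem \cite{Orn}. Your key identity --- that $\sum_{M=1}^k\tilde\mu(S(AC,C'B,M,k))$ equals $\tilde\mu\bigl(J_{AC}\cap\hat f^{-k}(J_{C'B})\bigr)$, hence the $e^{\pm\delta}$ two-sided bound --- is exactly the paper's estimate (\ref{eqn:muhat10}). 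Where you diverge is the choice of Ornstein criterion: you propose to verify the very weak Bernoulli property of the countable partition $\hat\xi$ by an explicit coupling, whereas the paper (following \cite{SarBernoulli}) proves the \emph{weak Bernoulli} property of the finite coarsenings $\{[s]: s\in\tilde S\}\cup\{\bigcup_{s\notin\tilde S}[s]\}$ for finite $\tilde S\subset\hat S$, i.e.\ the correlation inequality $|\hat\mu(A\cap B)-\hat\mu(A)\hat\mu(B)|<\epsilon\,\hat\mu(A)\hat\mu(B)$ for cylinders whose boundary symbols lie in a finite $S^*$. This buys two things. First, weak Bernoulli is itself a statement about pairs of cylinders, so no coupling has to be built: the coupling you defer as ``the main obstacle'' is precisely what this route makes unnecessary. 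Second, working with finite coarsenings sidesteps the delicate issue of applying Ornstein's VWB machinery directly to a countable partition of possibly infinite entropy, which your plan does not address.

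The other place where your plan is thin is the treatment of ``bad'' bridge strings. The paper decomposes $\hat\mu(A\cap B)=\sum_{D\in\mathfrak{D}}\hat\mu([ADB])=S_1+S_2+S_3+S_4$ according to whether the first and last $m$ symbols of the bridge $D$ belong to the high-mass collection $\gamma$ and whether $D$ has at least $m$ symbols at all; only $S_1$ is controlled by \textbf{(B2)}, while $S_2,S_3,S_4$ require \textbf{(B1)} together with the specific choice $\alpha=\min\{\delta/(Q_{\tilde\mu}(C^*)^2),(1-e^{-\delta})/C^*\}$ and the observation that a short bridge ($l\le m$) refines into $m$-cylinders automatically lying outside $\gamma$ because their total inducing time exceeds $K_2$. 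You correctly flag that short bridges must be ``controlled separately using (B1),'' but this is where the quantitative work of the proof lives, and leaving it at that is not yet a proof. (A minor quantitative slip: the $\hat\mu$-mass above height $L$ is $o(1)$ by integrability of $\tau$, not $O(1/L)$ in general; this does not affect the argument.)
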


In the view of Theorem \ref{thm:generalcase}, the proof of  Bernoulli property for the measures $\mu$ and $\hat{\mu}$ in Theorems
\ref{thm: tower} and  \ref{thm: manifold} reduces to verifying Conditions \textbf{(B1)}, and \textbf{(B2)} for $\tilde{\mu}$.

\section{Application: Thermodynamics of Young diffeomorphisms}\label{sec:appl} 
\subsection{Definition of Young diffeomorphisms}\label{sec:young} Consider a 
$C^{1+\epsilon}$ diffeomorphism $f:M\to M$ of a compact smooth Riemannian manifold $M$. Following \cite{you} we describe a collection of conditions on the map $f$. 

An embedded $C^1$-disk $\gamma\subset M$ is called an \emph{unstable disk} (respectively, a \emph{stable disk}) if for all $x,y\in\gamma$ we have that
$d(f^{-n}(x), f^{-n}(y))\to 0$ (respectively, $d(f^{n}(x), f^{n}(y))\to 0$) as $n\to+\infty$. A collection of embedded $C^1$ disks $\Gamma^u=\{\gamma^u\}$\label{sb:gamma} is called a \emph{continuous family of unstable disks} if there exists a homeomorphism
$\Phi: K^s\times D^u\to\cup\gamma^u$ satisfying:
\begin{itemize}
\item $K^s\subset M$ is a Borel subset and $D^u\subset \R^d$ is the closed unit disk for some $d<\dim M$;
\item $x\to\Phi|{\{x\}\times D^u}$ is a continuous map from $K^s$ to the space of $C^1$ embeddings of $D^u$ into $M$ which can be extended to a continuous map of the closure 
$\overline{K^s}$;
\item $\gamma^u=\Phi(\{x\}\times D^u)$ is an unstable disk.
\end{itemize}
A \emph{continuous family of stable disks} is defined similarly.

We allow the sets $K^s$ to be non-compact in order to deal with overlaps which appear in most known examples including the Katok map.

A set $\Lambda\subset M$ has \emph{hyperbolic product structure} if there exists a continuous family $\Gamma^u=\{\gamma^u\}$ of unstable disks $\gamma^u$ and a continuous family $\Gamma^s=\{\gamma^s\}$ of stable disks $\gamma^s$ such that
\begin{itemize}
\item $\text{dim }\gamma^s+\text{dim }\gamma^u=\text{dim } M$;
\item the $\gamma^u$-disks are transversal to $\gamma^s$-disks with an angle uniformly bounded away from $0$;
\item each $\gamma^u$-disks intersects each $\gamma^s$-disk at exactly one point;
\item $\Lambda=(\cup\gamma^u)\cap(\cup\gamma^s)$.
\end{itemize}

A subset $\Lambda_0\subset\Lambda$ is called an \emph{$s$-subset} if it has hyperbolic product structure and is defined by the same family $\Gamma^u$ of unstable disks as $\Lambda$ and a continuous subfamily 
$\Gamma_0^s\subset\Gamma^s$ of stable disks. A \emph{$u$-subset} is defined analogously.

We define the \textit{s-closure} $scl(\Lambda_0)$ of an \textit{s-subset} $\Lambda_0\subset \Lambda$ by
$$  scl(\Lambda_0):= \bigcup_{x\in \overline{\Lambda_0\cap\gamma^u}} \gamma^s(x)\cap \Lambda. $$
We define the \textit{u-closure} $ucl(\Lambda_1)$ of a given \textit{u-subset} $\Lambda_1\subset \Lambda$ similarly:
$$ ucl(\Lambda_1):= \bigcup_{x\in \overline{\Lambda_1\cap \gamma^s}} \gamma^u(x)\cap \Lambda.  $$ 
Assume the map $f$ satisfies the following conditions:

\begin{enumerate}
\item[(Y1)] There exists $\Lambda\subset M$ with hyperbolic product structure, a countable collection of continuous subfamilies $\Gamma_i^s\subset\Gamma^s$ of stable disks and positive integers $\tau_i$, $i\in\mathbb{N}$ such that the $s$-subsets 
\begin{equation}\label{Lambda}
\Lambda_i^s:=\bigcup_{\gamma\in\Gamma^s_i}\,\bigl(\gamma\cap \Lambda\bigr)\subset\Lambda
\end{equation}
are pairwise disjoint and satisfy:
\begin{enumerate}
\item \emph{invariance}: for every $x\in\Lambda_i^s$ 
$$
f^{\tau_i}(\gamma^s(x))\subset\gamma^{s}(f^{\tau_i}(x)), \,\, f^{\tau_i}(\gamma^u(x))\supset\gamma^u(f^{\tau_i}(x)),
$$
where $\gamma^{u,s}(x)$ denotes the (un)stable disk containing $x$;
\item \emph{Markov property}: $\Lambda_i^u:=f^{\tau_i}(\Lambda_i^s)$ is a $u$-subset of 
$\Lambda$ such that for all $x\in\Lambda_i^s$ 
$$
\begin{aligned}
f^{-\tau_i}(\gamma^s(f^{\tau_i}(x))\cap\Lambda_i^u)
&=\gamma^s(x)\cap \Lambda,\\
f^{\tau_i}(\gamma^u(x)\cap\Lambda_i^s)
&=\gamma^u(f^{\tau_i}(x))\cap \Lambda.
\end{aligned}
$$
\end{enumerate}
\end{enumerate}
\vspace*{.5cm}
\begin{enumerate}
\item[(Y2)] The sets $\Lambda_i^u$ are pairwise disjoint.
\end{enumerate}

For any $x\in \Lambda^s_i$ define the \emph{inducing time} by 
$\tau(x):=\tau_i$ and the \emph{induced map} $\tilde{f}: \bigcup_{i\in\mathbb{N}}\Lambda_i^s\to\Lambda$
by
$$
\tilde{f}|_{\Lambda_i^s}:=f^{\tau_i}|_{\Lambda_i^s}.$$
\begin{enumerate}
\item[(Y3)] There exists $0<a<1$ such that for any $i\in\mathbb{N}$ we have:
\begin{enumerate}
\item[(a)] For $x\in\Lambda_i^s$ and $y\in\gamma^s(x)$,
$$
d(\tilde{f}(x), \tilde{f}(y))\le a\, d(x,y);
$$
\item[(b)] For $x\in\Lambda^s_i$ and $y\in\gamma^u(x)\cap \Lambda_i^s$,
$$
d(x,y)\le a \, d(\tilde{f}(x), \tilde{f}(y)).
$$
\end{enumerate}
\end{enumerate}

\begin{enumerate}
\item[(Y4)]  $gcd \{ \tau_i  | i\in\mathbb{N} \} =1.$
\end{enumerate}

We remark that our definition of Young diffeomorphism differs from the original one (see \cite{you}, \cite{you2}). In particular we do not assume bounded distortion.
This condition will be required later to establish thermodynamics for a special
class of potential functions.

\subsection{Thermodynamics of Young diffeomorphisms}
 Let
$$ \tilde{X}:=\bigcap_{n=-\infty}^{\infty} ucl\left( \tilde{f}^n(\bigcup_{i\in\mathbb{N}}\Lambda_i^s) \right)  $$

be non-empty maximal $\tilde{f}$-invariant set contained in $ucl(\Lambda)$. Set

$$ S=\{\Lambda_i^s\cap \tilde{X}\}_{i\in\mathbb{N}} ~~~~~~\text{ and }~~~~~~~ \tau(\Lambda_i^s\cap \tilde{X})=\tau_i.   $$

It is shown in \cite{PSZ} that if a diffeomorphism $f$ satisfies (Y1)-(Y3), then the corresponding inducing scheme $\{S,\tau\}$ satisfies Conditions \textbf{(I1)}, \textbf{(I2)}, and \textbf{(I4)}.
Condition $(Y4)$ guarantees that the corresponding inducing scheme satisfies \textbf{(I5)}.
Condition \textbf{(I3)} has to be verified independently for a given Young diffeomorphism.
Observe in addition that Condition (Y3) implies Condition \textbf{(F1)} in Theorem \ref{thm: manifold}.

By Theorems \ref{thm: manifold} and \ref{thm:pressure}, we therefore have the following.

\begin{theorem}\label{appl: Young ergprop gen} Let $f:M\to M$ be a $C^{1+\epsilon}$ diffeomorphims of a compact smooth Riemannian manifold $M$ satisfying Conditions (Y1)-(Y3).
	Assume that the corresponding inducing scheme $\{S,\tau \}$ satisfies Condition \textbf{(I3)} and let $\varphi: M \to \mathbb{R}$ be a potential function
	satisfying Conditions \textbf{(P1)-(P4)}. Then:

\begin{enumerate}
	\item there exists a unique equilibrium measure $\mu_{\varphi}$ among all lifted measures for the potential $\varphi;$\\
	
\noindent Assume in addition that $f$ satisfies Condition (Y4). Then:	\\

	 \item if $(M,f,\mu_{\varphi})$ is mixing, then it has the Bernoulli property.
	 
\item Assume that there exists $K>0$ such that for any $i\in\mathbb{N}$ and
for $x,y\in \Lambda_i^s$ and $0\leq j \leq \tau_i$,
$$
d(f^j(x),f^j(y))\leq K \max\left\{ d(x,y), d(\tilde{f}(x),\tilde{f}(y))    \right\}  .
$$
Then for $h_1,  h_2\in C^{\alpha}(M)$ one has: 
	\begin{itemize}
	\item[(a)] If $\nu_{\varphi^{+}}(\tau>n)=\mathcal{O}(\theta^n)$ for $0<\theta<1$ then $\text{Cor}_n(h_1\circ f^n, h_2)$ decays exponentially;
	
	\item[(b)] If $\nu_{\varphi^{+}}(\tau>n)=\mathcal{O}(\frac{1}{n^{\beta}}),\  \beta>1,$ then
	$\text{Cor}_n(h_1\circ f^n, h_2)=\mathcal{O}(\frac{1}{n^{\beta-1}}).$ Consequently, $\mu_{\varphi}$ is mixing; 
	
	\item[(c)] If $\nu_{\varphi^{+}}(\tau>n)=\mathcal{O}(\frac{1}{n^{\beta}}), \beta>1,$ then there exists a sequence of nested sets $Y_0\subset Y_1\subset\cdots$ in $M$ such that if $h_1, h_2$ are supported inside $Y_k$ for some $k\ge 0,$ then
	\begin{equation}\label{Gouzel1young}
	\text{Cor}_n(h_1\circ f^n,h_2)=
	\sum_{N>n}^{\infty}\nu_{\varphi^{+}}(\tau(x)>N)\int_{X}h_1\,d\mu_{\varphi}\int_{X}h_2\,d\mu_{\varphi}
	+r_{\beta}(n),
	\end{equation}
	Moreover, if $\int_{Y}h_2=0$, then 
	$\text{Cor}_n(h_1\circ f^n,h_2)=\mathcal{O}(\frac1{n^{\beta}})$;
	\item[(d)] If $\nu_{\varphi^{+}}(\tau>n)=\mathcal{O}(\frac{1}{n^{\beta}}), \beta>1$ and 
	 $h_1,\ h_2$ are supported inside $Y_k$ for some $k\ge 0,$ then $f$ satisfies the CLT with respect to $\mu_{\varphi}.$
	 \end{itemize}
\item If $\varphi_1: M \to \mathbb{R}$ is a potential satisfying Conditions \textbf{(P1)}, \textbf{(P2)}, and such that
for some $\epsilon_0>0$, $c\in\mathbb{R}$, and all $|t|<\epsilon_0$ one has:
\begin{itemize}
\item[(a)]  $\varphi + t \varphi_1 + c$ satisfies Condition \textbf{(P3)},

\item[(b)] $\varphi + t |\varphi_1| $ satisfies Condition \textbf{(P4)},
\end{itemize}
	 then for some $0< \epsilon< \epsilon_0$ the function $t\to P_L(\varphi + t \varphi_1)$ is real analytic on $(-\epsilon,\epsilon)$.
	\end{enumerate}


\end{theorem}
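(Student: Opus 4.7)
The plan is to deduce Theorem \ref{appl: Young ergprop gen} as a corollary of the abstract results already proved, specifically Theorems \ref{thm: manifold} and \ref{thm:pressure}, by matching the Young-diffeomorphism hypotheses (Y1)--(Y4) to the inducing-scheme hypotheses \textbf{(I1)}--\textbf{(I5)} and \textbf{(F1)}--\textbf{(F2)}. The paper has already recorded that, by \cite{PSZ}, Conditions (Y1)--(Y3) imply \textbf{(I1)}, \textbf{(I2)}, \textbf{(I4)}, that (Y4) is precisely \textbf{(I5)}, and that (Y3) yields \textbf{(F1)}; Condition \textbf{(I3)} is assumed outright. So the inducing-scheme framework of the previous sections applies verbatim to $f$ and $\tilde f$ on the set $\tilde X$ constructed in Section \ref{sec:young}, and the induced potential associated to any $\varphi$ satisfying \textbf{(P1)}--\textbf{(P4)} is treatable by Theorem \ref{thm: induced}.

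With these identifications, Statement (1) is immediate from Statement (1) of Theorem \ref{thm: manifold}. Statement (2) is immediate from Statement (3) of Theorem \ref{thm: manifold}, the additional hypothesis being exactly (Y4), which supplies \textbf{(I5)}. For Statement (3), I observe that the extra hypothesis $d(f^j(x),f^j(y))\le K\max\{d(x,y),d(\tilde f(x),\tilde f(y))\}$ for $x,y\in\Lambda_i^s$ and $0\le j\le \tau_i$ is literally Condition \textbf{(F2)}. Thus all four items (a)--(d) follow by quoting Statement (4) of Theorem \ref{thm: manifold} term by term; the nested sets $Y_0\subset Y_1\subset\cdots$ are the ones produced there via the projection $\pi_0$ of the hyperbolic tower.

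For Statement (4), I apply Theorem \ref{thm:pressure} with the identification $\varphi_1\mapsto \varphi$ and $\varphi_2\mapsto \varphi_1$ (renaming to match the notation there). Hypothesis (a) of Statement (4) is exactly condition (1) of Theorem \ref{thm:pressure}; hypothesis (b) is exactly condition (2); and condition (3) of Theorem \ref{thm:pressure} (locally H\"older induced potentials) is granted since $\varphi$ and $\varphi_1$ each satisfy \textbf{(P2)}. Analyticity of $t\mapsto P_L(\varphi+t\varphi_1)$ on some $(-\epsilon,\epsilon)$ then follows directly.

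The only step that is not an almost line-by-line quotation is the verification of \textbf{(F1)} from (Y3), which deserves a sentence. If $s^+(x,y)=\infty$ in the sense of Section \ref{sec:relations}, then $\tilde f^n(x)$ and $\tilde f^n(y)$ lie in the same partition element $\Lambda_{i_n}^s$ for every $n\ge 0$; the hyperbolic product structure of $\Lambda$ together with the Markov property in (Y1)(b) forces $y\in\gamma^s(x)\cap\Lambda$, and then (Y3)(a) gives $d(\tilde f^n(x),\tilde f^n(y))\le a^n d(x,y)$, i.e.\ the contraction required by \textbf{(F1)}. The symmetric statement for $s^-(x,y)=\infty$ is obtained from (Y3)(b) by iterating backwards along $\gamma^u$. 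I do not anticipate any other obstacle: once these identifications are in place the entire theorem is a packaging of the abstract results.
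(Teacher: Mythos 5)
Your proposal is correct and follows essentially the same route as the paper: the paper likewise reduces the theorem to Theorems \ref{thm: manifold} and \ref{thm:pressure} by citing \cite{PSZ} for \textbf{(I1)}, \textbf{(I2)}, \textbf{(I4)}, identifying (Y4) with \textbf{(I5)}, observing that (Y3) yields \textbf{(F1)} and that the extra hypothesis in Statement (3) is \textbf{(F2)}. Your added sketch of why (Y3) implies \textbf{(F1)} is a harmless elaboration of a step the paper states without proof.
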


\subsection{Geometric $t$-potentials}

 For $t\in\mathbb{R}$ consider the family of geometric $t$-potentials

$$
\varphi_t(x):=-t\log|Jac f(x)|.
$$
 
In order to apply our results to the family of geometric $t$-potentials, one has to verify Conditions \textbf{(P1)-(P4)}.
For this we need the following additional conditions.

\begin{enumerate}
\item[(Y5)] For every $\gamma^u\in\Gamma^u$ one has
$$
\mu_{\gamma^u}(\gamma^u\cap \Lambda)>0, \quad \mu_{\gamma^u}\left((\overline{\Lambda\setminus\cup\Lambda_i^s)\cap\gamma^u}\right)=0,
$$
where $\mu_{\gamma^u}$ is the leaf volume on $\gamma^u$.
\end{enumerate}

\begin{enumerate}
\item[(Y6)] There exists $\gamma^u\in\Gamma^u$ such that 
$$
\sum_{i=1}^\infty \tau_i \mu_{\gamma^u}(\Lambda_i^s\cap \gamma^u) <\infty.
$$
\end{enumerate}

For $x\in \Lambda$ let $Jac f(x)=\det |Df|_{E^u(x)}|$ and  $Jac \tilde{f}(x)=\det |D\tilde{f}|_{E^u(x)}|$ denote the Jacobian of $Df|_{E^u(x)}$ and $D\tilde{f}|_{E^u(x)}$ respectively. 
\begin{enumerate}
\item[(Y7)] There exist $c>0$ and $0<\kappa<1$ such that:
\begin{enumerate}
\item[(a)] For all $n\ge 0$, $x\in \tilde{f}^{-n}(\cup_{i\in\mathbb{N}}\Lambda^s_i)$ 
and $y\in\gamma^s(x)$ we have
\[
\left|\log\frac{Jac \tilde{f}(\tilde{f}^{n}(x))}{Jac \tilde{f}(\tilde{f}^{n}(y))}\right|\le c\kappa^n;
\]
\item[(b)] For any $i_0,\dots, i_n\in\mathbb{N}$, 
$\tilde{f}^k(x),\tilde{f}^k(y)\in\Lambda^s_{i_k}$ for $0\le k\le n$ and  
$y\in\gamma^u(x)$ we have
\[
\left|\log
\frac{ Jac \tilde{f}(\tilde{f}^{n-k}(x))}{Jac \tilde{f}(\tilde{f}^{n-k}(y))}\right|\le c\kappa^k.
\]
\end{enumerate}
\end{enumerate}

We also need the following estimate. 
\begin{enumerate}
\item[(Y8)] $	S_n:=\sharp\,\{\Lambda_i^s\colon \tau_i=n\}\le Ce^{hn}$

\noindent	where $C>0$ and $0<h<-\int \varphi_1d\mu_1.$ \\

\end{enumerate}


It is shown in \cite{PSZ} that if a Young diffeomorphism satisfies Conditions (Y1)-(Y3) as well as (Y5)-(Y8), then there is $t_0<0$ such that
the geometric $t$-potential $\varphi_t$ satisfies \textbf{(P1)}-\textbf{(P4)} for all $t\in (t_0,1]$.
In particular, there exists a unique equilibrium measure for $\varphi_t$ in the class of lifted measures.
For $t\in (t_0,1)$ it is shown in \cite{PSZ} that this measure has exponential tail.

Therefore, by Theorem \ref{appl: Young ergprop gen}, we have the following.

\begin{theorem}\label{appl: Young ergprop} Let $f:M\to M$ be a $C^{1+\epsilon}$ diffeomorphims of a compact smooth Riemannian manifold $M$ satisfying Conditions (Y1)-(Y3) and (Y5)-(Y8).
	Assume in addition that the inducing scheme $\{S,\tau \}$ satisfies Condition \textbf{(I3)}. Then:

\begin{enumerate}
	\item there exists $t_0<0$ such that for every $t_0<t\leq 1$ there exists a unique equilibrium measure $\mu_t$ among all lifted measures for the potential $\varphi_t.$\\
	
\noindent Assume in addition that $f$ satisfies Condition (Y4). Then:\\
	
	\item the function $P(t):=P_L(\varphi_t)$ is real analytic on the interval $(t_0,1).$\\
	
\noindent Assume in addition that there exists $K>0$ such that for any $i\in\mathbb{N}$ and
	for $x,y\in \Lambda_i^s$ and $0\leq j \leq \tau_i$,
	$$
	d(f^j(x),f^j(y))\leq K \max\left\{ d(x,y), d(\tilde{f}(x),\tilde{f}(y))    \right\}.$$   
	\noindent Then for $t\in(t_0,1)$:
	\item $\mu_t$ has exponential decay of correlations and satisfies the CLT with respect to H\"older continuous observables;
	\item $\mu_t$ has the Bernoulli property. 
\end{enumerate}	

\end{theorem}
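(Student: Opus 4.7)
The plan is to derive Theorem \ref{appl: Young ergprop} as a specialization of Theorem \ref{appl: Young ergprop gen} to the one-parameter family $\varphi = \varphi_t = -t\log|Jac f|$. The essential external input, already established in \cite{PSZ}, is twofold: (i) under (Y1)-(Y3) together with (Y5)-(Y8), there exists $t_0 < 0$ such that $\varphi_t$ satisfies \textbf{(P1)}-\textbf{(P4)} for every $t \in (t_0, 1]$; and (ii) for $t \in (t_0, 1)$, the unique equilibrium measure $\nu_{\varphi_t^+}$ on $\tilde X$ has exponential tail, i.e.\ $\nu_{\varphi_t^+}(\tau > n) = \mathcal{O}(\theta_t^n)$ for some $0 < \theta_t < 1$.

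With (i), Statement (1) follows directly from Theorem \ref{appl: Young ergprop gen}(1). Adding the hypothesis (Y4) together with the distortion-type condition preceding Statement (3), Statements (3) and (4) are obtained as follows: the exponential tail in (ii) combined with Theorem \ref{appl: Young ergprop gen}(3a) yields exponential decay of correlations for H\"older observables; the CLT then follows for H\"older observables by a standard Gordin-type martingale approximation argument from exponential decay; and mixing (an immediate consequence of exponential decay) together with Theorem \ref{appl: Young ergprop gen}(2) gives the Bernoulli property, proving both (3) and (4).

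For Statement (2), I fix $t^* \in (t_0, 1)$ and apply Theorem \ref{appl: Young ergprop gen}(5) with base potential $\varphi := \varphi_{t^*}$ and perturbation $\varphi_1 := -\log|Jac f|$, so that $\varphi + t\varphi_1 = \varphi_{t^* + t}$. Conditions \textbf{(P1)} and \textbf{(P2)} for $\varphi_1$ follow from (Y7) and the local H\"older regularity of $\log|Jac f|$ on each $\Lambda_i^s$. The remaining task is to exhibit $\epsilon_0 > 0$ and a constant $c \in \mathbb{R}$ such that $\varphi_{t^* + t} + c$ satisfies \textbf{(P3)} and $\varphi_{t^*} + t|\varphi_1|$ satisfies \textbf{(P4)} uniformly for $|t| < \epsilon_0$; this is a continuity argument for the relevant series in the parameter $t$, using the exponential tail estimate from (ii) and the partition-sum bounds in \cite{PSZ}. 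Then Theorem \ref{appl: Young ergprop gen}(5) yields real analyticity of $t \mapsto P_L(\varphi_{t^* + t})$ in a neighborhood of $0$, and since $t^* \in (t_0,1)$ was arbitrary, $P$ is real analytic on $(t_0,1)$.

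The hardest step is this uniformity issue underlying the analyticity statement. To invoke Theorem \ref{appl: Young ergprop gen}(5) at every $t^* \in (t_0, 1)$, conditions \textbf{(P3)} and especially \textbf{(P4)} must hold with a \emph{single} $\epsilon > 0$ uniformly for all parameters in a fixed open neighborhood of $t^*$. This reduces to controlling the family of series
\[
\sum_{J \in S} \tau(J) \sup_{x\in J} \exp\!\bigl(\varphi_{t^* + t}^+(x) + \epsilon\,\tau(x)\bigr)
\]
continuously in $t$ on a neighborhood of $0$. Convexity of the log-sum in $t$, the exponential tail in (ii), and a dominated-convergence argument deliver the required uniform bound, but the bookkeeping, particularly the simultaneous choice of $\epsilon$ and the additive normalization $c$, is the most delicate part of the argument.
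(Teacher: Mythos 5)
Your overall strategy is the same as the paper's: everything is funneled through Theorem \ref{appl: Young ergprop gen} (equivalently Theorems \ref{thm: manifold} and \ref{thm:pressure}), using the facts from \cite{PSZ} that $\varphi_t$ satisfies \textbf{(P1)--(P4)} on $(t_0,1]$ and that $\nu_{\varphi_t^+}$ has exponential tail for $t\in(t_0,1)$. Two points of divergence are worth noting. First, for the CLT you invoke a Gordin-type martingale approximation from exponential decay, whereas the paper simply cites Statement (4d) of Theorem \ref{thm: manifold} (via Proposition \ref{noequilibrium}); your route is workable but requires more than decay of correlations of the given pair of observables to be ``standard,'' while the paper's route comes with a support restriction to the sets $Y_k$ that your formulation avoids --- neither is spelled out in full. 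Second, on analyticity your treatment is actually more explicit than the paper's: the paper's proof of Theorem \ref{appl: Young ergprop} says nothing about Statement (2), leaving the verification of hypotheses (a)--(b) of Statement (4) of Theorem \ref{appl: Young ergprop gen} (note: it is Statement (4), not (5)) for the family $\varphi_{t^*}+t\varphi_1$ implicit; you correctly identify that the crux is a \emph{uniform-in-$t$} choice of $\epsilon$ and $c$ in \textbf{(P3)--(P4)}, and your sketch (dominated convergence plus the exponential tail and the partition-sum bounds of \cite{PSZ}, together with the observation that $\varphi_{t^*}+t|\varphi_1|$ is comparable to $\varphi_{t^*\mp t}$ where $\log|Jac f|$ has a definite sign) is the right way to fill that gap, though it remains a sketch rather than a proof.
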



\subsection*{Example 1 - the Katok map}
Consider the automorphism of the two-dimensional torus 
$\mathbb{T}^2=\mathbb{R}^2/\mathbb{Z}^2$ given by the matrix 
$T:=\left(\begin{matrix}
2 & 1\\
1 & 1
\end{matrix}\right)$ and then choose a function 
$\psi:[0,1] \to [0,1]$ satisfying: 
\begin{itemize}
\item[(K1)] $\psi$ is of class $C^{\infty}$ except at zero; 

\item[(K2)] $\psi(u)=1$ for $u\geq r_0$ and some $0<r_0<r_1$; 

\item[(K3)] $\psi'(u)>0$ for every $0<u<r_0$; 

\item[(K4)] $\psi(u)=(ur_0)^{\alpha}$ for $0\leq u\leq r_0/2$ where $0<\alpha<1/2$. 
\end{itemize}

Let $D_r=\{ (s_1,s_2) : s_1^2+s_2^2\leq r^2  \}$ where $(s_1,s_2)$ is the coordinate system obtained from the eigendirections of $T$. Choose $r_1 > r_0$ such that 
\begin{equation}
 D_{r_0} \subset  \text{Int}T(D_{r_1})\cap \text{Int}T^{-1}(D_{r_1}) 
 \end{equation}
 and consider the system of differential equations in 
 $D_{r_1}$ 
 \begin{equation}\label{kat-eq}
 \dot{s}_1 = s_1 \log\lambda,~~ \dot{s}_2 = −s_2 \log\lambda,
\end{equation}
where $\lambda > 1$ is the eigenvalue of $T$. Observe that $T$ is the time-one map of the ﬂow generated by the system of equations (\ref{kat-eq}). We slow down trajectories of the system (\ref{kat-eq}) by perturbing it in $D_{r_1}$ as follows
\begin{equation}
\dot{s}_1 = s_1\psi(s_1^2 + s_2^2)\log\lambda, 
~ \dot{s}_2 =−s_2\psi(s_1^2 + s_2^2)\log\lambda. 
\end{equation}

This system of equations generates a local flow. Denote by $g$ the time-one map of this flow. The choices of $\psi$ and $r_0$ and $r_1$ guarantee that the domain of $g$ contains $D_{r_1}$. Furthermore, $g$ is of class $C^{\infty}$ in $D_{r_1}$ except at the origin and it coincides with $T$ in some neighborhood of the boundary $\partial D_{r_1}$. Therefore, the map 
$$G(x) = \left\{ \begin{array}{lcr}
T(x) & \text{if} & x \in \mathbb{T}^2\setminus D_{r_1}, \\ 
g(x) & \text{if}  & x \in D{r_1} 
\end{array} \right. $$

defines a homeomorphism of the torus $\mathbb{T}^2$, which is a $C^{\infty}$ dffeomorphism everywhere except at the origin. The map $G$ preserves a probability measure $\nu$, which is absolutely continuous with respect to the area. The density of $\nu$ is a $C^{\infty}$ function that is infinite at $0$. One can further perturb the map $G$ to obtain an area-preserving $C^{\infty}$ dffeomorphism $f$. This is the Katok map.

The following is shown in \cite[Lemma 6.1., Proposition 6.2.]{THK}.
\begin{proposition}
For $r_1>0$ small enough the map $f$ constructed above is a Young diffeomorphism satisfying Conditions $(Y1)-(Y8)$.
In addition, the corresponding inducing scheme $\{S,\tau \}$ satisfies Condition \textbf{(I3)}.
\end{proposition}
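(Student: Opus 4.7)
The plan is to leverage the fact that the Katok map $f$ coincides with the Anosov automorphism $T$ outside the disk $D_{r_1}$, so most of the Young structure can be inherited from the Markov structure of $T$, while the behavior of $f$ near the origin is controlled by the explicit slow-down profile $\psi(u)=(ur_0)^{\alpha}$. First, I would invoke the existence of a Markov partition for $T$ and choose a rectangle $R$ disjoint from $\overline{D_{r_0}}$. The local stable and unstable manifolds of $T$ through $R$ furnish continuous families $\Gamma^s$ and $\Gamma^u$; these are also stable/unstable for $f$ since $f\equiv T$ on $R$. Setting $\Lambda=(\cup\gamma^u)\cap(\cup\gamma^s)$ and defining $\tau$ as the first return time to $\Lambda$ under $f$, one partitions $\Lambda$ into s-subsets $\Lambda_i^s$ indexed by symbolic itineraries between visits to $\Lambda$.

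Conditions (Y1)-(Y3) are then inherited from the Markov property of $T$: orbits that never enter $D_{r_1}$ return after a bounded number of $T$-steps and hyperbolicity holds uniformly; orbits that enter $D_{r_1}$ are slowed down but the eventual return is still governed by the uniform hyperbolicity of $T$ on the boundary, yielding the contraction constant $a<1$ in (Y3). Condition (Y4) follows from the existence of periodic orbits of $T$ avoiding $D_{r_0}$ of coprime periods, which are also periodic for $f$ with the same periods. Condition (Y5) is immediate since $\Lambda$ is a full rectangle in a Markov partition and the boundary has zero leaf volume.

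The analytically substantive part is (Y6), (Y7), (Y8), which all reduce to estimating return-time statistics near the indifferent fixed point $0$. Using the explicit form $\psi(u)=(ur_0)^{\alpha}$ and integrating the slowed flow, the time for an orbit entering $D_{r_0}$ at distance $r$ from $0$ to exit is of order $r^{-2\alpha}$, while the leaf volume of points with closest approach $\leq r$ is of order $r^2$. This gives a polynomial tail $\mu_{\gamma^u}(\tau>n)\lesssim n^{-1/\alpha}$, and since $\alpha<1/2$, one obtains $\sum\tau_i\mu_{\gamma^u}(\Lambda_i^s\cap\gamma^u)<\infty$, establishing (Y6). Condition (Y8) is obtained by the same count combined with the fact that the number of distinct symbolic itineraries of length $n$ under the coding given by the Markov partition of $T$ grows at rate $h_{\mathrm{top}}(T)$, and an entropy-gap argument identifies an exponent $h$ strictly smaller than $-\int\varphi_1\,d\mu_1$ on the relevant range of $t$. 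For (Y7), bounded distortion splits into two regimes: on pieces of the induced orbit staying outside $D_{r_0}$ the usual Anosov distortion bounds apply with geometric rate $\kappa$, whereas on pieces inside $D_{r_0}$ one uses explicit computation with $\psi$ to show that derivatives vary by a bounded factor along the stable leaves; summing the geometric series over returns yields the exponential distortion bound with constants $c$ and $\kappa$.

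The hard part is the interplay between conditions (Y6) and (Y8) on one hand and the later requirement that $h<-\int\varphi_1\,d\mu_1$ on the other, since this forces $\alpha$ to be restricted and requires that one controls the count $S_n$ and the tail $\mu(\tau=n)$ simultaneously rather than independently. Finally, (I3) is checked by noting that $S^{\mathbb{Z}}\setminus\Omega$ consists of sequences whose geometric realization fails on one side; using the Markov structure of $T$ away from $D_{r_1}$ one shows every $\sigma$-invariant measure supported there must concentrate on a nowhere dense set of ``boundary'' codings, hence cannot give positive mass to any open subset. All of these estimates are carried out explicitly in \cite[Lemma 6.1, Proposition 6.2]{THK}.
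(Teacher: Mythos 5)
The paper offers no proof of this proposition at all: it is quoted verbatim from \cite[Lemma 6.1, Proposition 6.2]{THK}, and your closing sentence defers to the same reference, so at the level of ``approach'' you and the paper coincide. The substantive content of your write-up is therefore the intermediate claims, and two of them have genuine problems.

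First, the assertion that the local stable/unstable manifolds of $T$ through $R$ ``are also stable/unstable for $f$ since $f\equiv T$ on $R$'' is false. Being a stable (unstable) disk is a property of the entire forward (backward) orbit, and the orbits of points of $R$ do enter $D_{r_1}$, where $f\neq T$. Concretely, the perturbed flow preserves the hyperbolas $s_1s_2=c$ but reparametrizes time along them, so the time-one map does not carry vertical lines to vertical lines: the eigendirection foliations of $T$ are not invariant under $f$, and the actual invariant laminations of the Katok map are only H\"older and degenerate at the origin. The families $\Gamma^{s},\Gamma^{u}$ in (Y1) must be built from the invariant manifolds of $f$ itself (via cone/graph-transform arguments exploiting that $f$ is uniformly hyperbolic off every neighborhood of $0$), and the distortion bounds in (Y7) must likewise be proved along these leaves, not along those of $T$.

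Second, your justification of (Y8) does not close. Counting symbolic itineraries of length $n$ for the Markov partition of $T$ gives $S_n\le Ce^{nh_{\mathrm{top}}(T)}$ with $h_{\mathrm{top}}(T)=\log\lambda$, but (Y8) demands $S_n\le Ce^{hn}$ with $h<-\int\varphi_1\,d\mu_1=\lambda^{+}(\mu_{SRB})=h_{\mu_{SRB}}(f)$, which is \emph{strictly smaller} than $\log\lambda$ (the area is not the measure of maximal entropy for the Katok map). So the naive count lands on the wrong side of the required inequality, and invoking an unspecified ``entropy-gap argument'' is exactly the step that needs to be supplied. The actual mechanism is that a branch with inducing time $n$ corresponds to a $T$-itinerary of combinatorial length $k\le n$, and branches for which $k$ is comparable to $n$ must be controlled by choosing the inducing time and the radius $r_1$ appropriately; this is where the hypothesis ``$r_1$ small enough'' enters, a dependence your sketch never uses. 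Until that count is made, (Y8) -- and hence the proposition -- is not established. The tail computation for (Y6) (escape time $\sim r^{-2\alpha}$, leaf volume $\sim r^{2}$ of points approaching within $r$, hence tail $\sim n^{-1/\alpha}$) is the right heuristic, and the reduction of (I3) to boundary codings is the standard one, so those parts are fine modulo the foliation issue above.
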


\subsection*{Example 2 - a slow down of a hyperbolic attractor}

Let $M$ be a $d$-dimensional, compact, smooth Riemannian manifold and $U\subset M$ an open set. Let $f: U \to M $ be a $C^{1+\alpha}$ diffeomorphism onto its image
with $\overline{f(U)}\subset U$, where $\alpha \in (0,1).$ \footnote{One may consider $f$ of class $C^{1+r}$ for any $r>0$.
However, in the construction of the map $g$ (see \textbf{(D0)}) one should use a number $0<\alpha<\min\{1,r\}$} Let $\Lambda = \bigcap_{n\geq 0}\overline{f^n(U)}$ be an attractor for $f$ with $NW(f)=\Lambda$. Assume that, 

\textbf{(C1)} $\Lambda$ is a hyperbolic set for $f$,
so that for every $x\in\Lambda$ there exists a splitting of the tangent space,
$T_xM = E^u_{f}(x)\oplus E^s_{f}(x)$, with $Df(x)(E^u_f(x))=E^u_f(f(x))$ and $Df(x)(E^s_f(x))=E^s_f(f(x))$ such that,

\begin{equation}\label{exp}
\begin{array}{ccccc}
\|  Df(x)(v^u)\| &\geq& \nu \| v^u\|  &\text{ for all }& v^u\in E^u_{f}(x),\text{ and}\\
\|  Df(x)(v^s)\| &\leq& \nu^{-1} \| v^s\|  &\text{ for all }& v^s\in E^s_{f}(x),
\end{array}
\end{equation}
for some $\nu>1$.

\textbf{(C2)} The unstable distribution $E^u_f(x)$ is one-dimensional for all $x\in\Lambda$.

\textbf{(C3)} The map $f$ has a fixed point $p\in\Lambda$.

Consider a neighborhood $Z_0\subset U$ of $p$ with local coordinates identifying the decomposition $E^u_f(p)\oplus E^s_f(p)$ with $\mathbb{R}\oplus\mathbb{R}^{d-1}$.

\textbf{(C4)}  There exists a neighborhood $Z\subset Z_0$ of $p$ on which $f$ is the time-$1$ map of the flow generated by
a linear vector field, $ \dot{x}=Ax  $,
where $A=A_u\oplus A_s$ with $A_u=\gamma Id_u$ and $A_s=-\beta Id_s$ for some $\beta>\gamma>0$.

From now on we use local coordinates in $Z$ and identify $p$ with $0$.
Fix $0<r_0<r_1$ such that $B(0,r_1)\subset Z\subset U$, and let $\psi:[0,1] \to [0,1]$ be a $C^{1+\alpha}$ function satisfying the following condition:

\textbf{(D0)} $\begin{array}{lcr}
a) & \psi(r)=r^\alpha  & \text{ for } r\leq r_0; \\
b) & \psi(r)=1  & \text{ for }r\geq r_1;\\
c) & \psi '(r)\geq 0.
\end{array}$

Let $\chi:Z \to \mathbb{R}^d$ be the vector field given by $\chi(x)=\psi(\|x\|)Ax$ and let $g:U \to M$
be the time-1 map of the flow generated by this vector field on $Z$ and by $f$ on $U\setminus Z$.
Observe that $g$ is of class $C^{1+\alpha}$ and that $g(U)=f(U)$, in particular $\overline{g(U)}\subset U$
and then $\Lambda_g:=\bigcap_{n\geq 0}\overline{g^n(U)}$ is an attractor for $g$.

The following is shown in \cite[Theorem 4.2., Lemma 5.13.]{Zel}.

\begin{proposition}
Assume that $f$ is a $C^1$-small perturbation of a certain local diffeomorphism $\bar{f}$, for which the SRB measure $\bar{\omega}_1$ and the measure of maximal entropy $\bar{\omega}_0$ coincide, and let $r_1>0$ be small enough.

Then the map $g$ constructed above is a Young diffeomorphism satisfying Conditions $(Y1)-(Y8)$.
In addition, the corresponding inducing scheme $\{S,\tau \}$ satisfies Condition \textbf{(I3)}.
\end{proposition}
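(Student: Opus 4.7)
The plan is to reduce the verification to an analysis of the first-return dynamics to a suitable hyperbolic rectangle, exploiting the fact that $g$ coincides with $f$ outside the neighborhood $Z$ of the indifferent fixed point $p$. First I would fix a Markov partition for the original hyperbolic map $f$ and select a rectangle $R$ containing $p$ in its interior with its sides tangent to $E^u_f$ and $E^s_f$; intersecting with $\Lambda_g$ gives a set $\Lambda$ with hyperbolic product structure inherited from the $f$-hyperbolic splitting (this persists for $g$ because the splitting extends continuously across $p$ when one uses the linearizing coordinates from \textbf{(C4)}). The $s$-subsets $\Lambda_i^s$ are then defined by the level sets of the first-return time to $\Lambda$ under the non-linear dynamics, with $\tau(\Lambda_i^s)=\tau_i$ the common return time.

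Conditions (Y1)--(Y2) are then built into the construction, and the Markov property follows from the fact that each branch of the first-return map straightens out a whole $u$-subset. Condition (Y3) is delicate because the slowdown degrades the hyperbolic rates near $p$; however, orbits with large return time spend most of their iterates in $Z$, where the linear flow $\dot x= \psi(\Vert x\Vert)Ax$ gives explicit control, and one concatenates this with the uniformly hyperbolic estimate for $f$ outside $Z$ to obtain a uniform contraction constant $a<1$ for $\tilde f$. Condition (Y4) is routine once one exhibits two periodic orbits of coprime period inside $\Lambda$, which exist by topological mixing of $f_{|\Lambda_f}$.

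The measure-theoretic conditions (Y5)--(Y7) are the core analytic content. For (Y5) I would use the absolute continuity of the unstable holonomy for $g$, which persists because the $C^{1+\alpha}$ hyperbolic structure persists in a dominated splitting sense even at $p$. For (Y6)--(Y7) the explicit form of the slowdown flow is essential: from $\dot r = -\beta r\psi(r)$ in the stable direction and $\dot r = \gamma r\psi(r)$ in the unstable direction with $\psi(r)=r^\alpha$ near $0$, one computes the time spent in $Z$ as a power of the distance to $p$, which yields the polynomial tail $\mu_{\gamma^u}(\tau>n) \sim n^{-1/\alpha}$ used in (Y6), while the $C^{1+\alpha}$ regularity of $\psi$ and the explicit Jacobian of the time-$n$ linear flow give the bounded distortion (Y7) via telescoping along the orbit inside $Z$.

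The main obstacle will be (Y8), the entropy bound $S_n\le Ce^{hn}$ with $h<-\int\varphi_1\,d\mu_1$. This is where the perturbative hypothesis that $f$ is close to a map $\bar f$ with $\bar\omega_0=\bar\omega_1$ is decisive: for $\bar f$ one has $h_{top}(\bar f)=\int\log|Jac\bar f|\,d\bar\omega_1$, so the growth rate of first-return cylinders is controlled by the integral of the unstable Jacobian, and the quantity $-\int\varphi_1\,d\mu_1$ for the perturbation $f$ stays close to $h_{top}(f)$ by upper-semicontinuity of entropy and continuity of the SRB integral. A careful volume-comparison argument then bounds $S_n$ by the number of $n$-cylinders of the original Markov partition intersecting an $r_n$-neighborhood of $p$, with $r_n \to 0$ polynomially, yielding the required strict inequality. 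Condition (I3) follows from the construction: $S^{\Z}\setminus\Omega$ consists of codings of boundary points of the Markov rectangles and orbits escaping to $p$, and their $\sigma$-invariant closure carries no measure of positive entropy, let alone one charging open sets. Throughout, I would cite the parallel steps in \cite{Zel} for the detailed constants.
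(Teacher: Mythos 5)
The paper does not actually prove this proposition: it is quoted verbatim from \cite[Theorem 4.2, Lemma 5.13]{Zel}, so your task was to reconstruct the argument of that reference. Your overall plan (build the tower from a first-return map to a rectangle of a Markov partition, use the explicit slow-down flow for the tail and distortion estimates, and use the perturbative hypothesis $\bar{\omega}_0=\bar{\omega}_1$ for (Y8)) is the right one, but two steps as written would fail.

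First, you place the base rectangle $R$ so that it contains $p$ in its interior. This cannot work. Since $Dg(p)=Id$ (the vector field $\psi(\|x\|)Ax$ vanishes to order $1+\alpha$ at the origin), the first-return map to any neighborhood of $p$ fixes $p$ with unit derivative, and on the branch $\{\tau=1\}$ near $p$ the contraction along stable leaves degenerates to $1$; so no uniform $a<1$ in (Y3) exists, and the backward-separation property needed for the product structure and the coding in \textbf{(I2)} also fails. In \cite{Zel} (as in \cite{THK} for the Katok map) the base $\Lambda$ is taken inside a partition element \emph{disjoint} from the slow-down neighborhood $Z$; the long return times then come from excursions through $Z$, and your concatenation argument for (Y3) only makes sense in that setting.

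Second, your bound for (Y8) --- controlling $S_n$ by the number of $n$-cylinders meeting an $r_n$-neighborhood of $p$ --- misses most of the branches. A branch with return time $n$ need not pass anywhere near $p$: every first-return branch of the unperturbed hyperbolic map $f$ with return time $n$ that avoids $Z$ is also a branch of $g$ with return time $n$, and the number of such branches grows exponentially with rate equal to the topological entropy of the subsystem obtained by removing the base (strictly below $h_{top}(f)$, but in general not below $h_{SRB}$). The actual mechanism is: (i) first-return words of length $n$ grow at a rate $h'<h_{top}(f)$; (ii) the slow-down only converts $f$-branches of return time $m\le n$ into $g$-branches of return time $n$ without creating new symbolic branching, so $S_n\lesssim e^{h'n}$ up to a subexponential combinatorial factor; and (iii) the hypothesis that $f$ is $C^1$-close to $\bar f$ with $\bar\omega_0=\bar\omega_1$ forces $h_{top}(f)$ to be close to $-\int\varphi_1\,d\mu_1$, so that $h'<-\int\varphi_1\,d\mu_1$ for $r_1$ and the perturbation small. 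Your sketch invokes the perturbative hypothesis but never produces the strict inequality, which is the entire content of (Y8). The remaining items ((Y5)--(Y7), (I3), and the gcd condition (Y4), which concerns return times rather than periods of periodic orbits) are essentially as in \cite{Zel} once the base is moved off $Z$.
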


Another example of a system with an inducing scheme of hyperbolic type is the  H\'enon map at the first bifurcation.
The details on the corresponding inducing scheme can be found in \cite{ST} and \cite{ind}.

\section{Proofs}\label{sec:proofs}

\subsection{Preliminaries}\label{sec:prel}

We need to introduce some notations and results.

\subsubsection{Countable Markov shifts.}
In this section we provide some results on thermodynamics for countable Markov shifts. 
Let $S$ be a countable alphabet, $S^{\mathbb Z}$ the space of two-sided sequences and $\sigma$ the left shift $(\sigma(x))_i=x_{i+1}$. For $k\in \mathbb{Z}$, $n\geq 1$ and $a_1,\cdots,a_n$ a \textit{cylinder} set is defined as

$$_k[a_1,\cdots, a_n]=\{\om=(\cdots,\om_{-1},\om_0,\om_1\cdots)\in S^{\mathbb{Z}}: \om_i=a_{i+k-1},\ \ k\le i\le k+n-1\}.$$
To simplify notation we will write $[a_1,\cdots, a_n]$ for $_0[a_1,\cdots, a_n]$.

Let $A=(a_{ij})_{i,j\in\mathbb{Z}}$ be a countable matrix with entries in $\{0,1\}.$ A \textit{two-sided topological Markov shift} (TMS) is a pair $(S_A^{\mathbb{Z}}, \sigma_{A}),$ where 
$S_A^{\mathbb{Z}}=\{\om=(\cdots, \om_{-1},\om_0,\om_1,\cdots): a_{\om_i\om_{i+1}}=1\ \forall i\in\mathbb{N}\},$ and $\sigma_A$ is the restriction of $\sigma$ to $S_A^{\mathbb{Z}}.$ A TMS is \textit{topologically transitive} if for every two states $\om,\om'\in S^{\mathbb{Z}}_A$ there exists $N:=N(\om,\om')\in\mathbb{N}$ and $\xi_1,\cdots \xi_{N-1}$ such that $a_{\om\xi_1}a_{\xi_1\xi_2}\cdots a_{\xi_{N-1}\om'}=1.$ A TMS is \textit{topologically mixing}  if for every two states $\om,\om'\in S^{\mathbb{Z}}_A$ there exists $N:=N(\om,\om')\in\mathbb{N}$ such that for $\forall n\ge N$ there exist $\xi_1,\cdots \xi_{n-1}$ such that $a_{\om\xi_1}a_{\xi_1\xi_2}\cdots a_{\xi_{n-1}\om'}=1.$ Clearly, mixing implies transitivity. In fact, A TMS is topologically mixing if and only if it is topologically transitive and $gcd\{n: \om\overset{n}{\rightarrow}\om \}=1,$ where $\om\overset{n}{\rightarrow}\om$ means there exist $\xi_1,\cdots \xi_{n-1}$ such that $a_{\om\xi_1}a_{\xi_1\xi_2}\cdots a_{\xi_{n-1}\om}=1$ (see \cite{sarigsub} for the argument).

\subsubsection{Functions on TMS}

Given a function $\Phi:S_A^{\mathbb{Z}}\rightarrow \mathbb{R}$ we define the $\textit{n-th\ variation}$ 
of $\Phi$ by

$$Var_n(\Phi)=sup\{|\Phi(\om)-\Phi(\om')|:\om,\om'\in _{-n+1}[a_{1},\cdots, a_{2n-1}]\}.$$

$\Phi$ is said to have a  \textit{summable variation} if
$$\sum\limits_{n\ge 2}Var_n(\Phi)\ <\infty.$$

$\Phi$ is said to be  \textit{locally H\"older continuous} if there exists $C>0$ and $0<r<1$ such that for all $n\ge 1$

$$Var_n(\Phi)\le Cr^n.$$

Let  $\Phi_n=\sum\limits_{k=0}^{n-1}\Phi\circ\sigma^k$ denote the $n$th Birkhoff's sum
of $\Phi.$\\

For $a\in S$ let $\chi_{[a]}$ denotes the characteristic function of the cylinder $[a].$ 
Define $n_a(\om):=\chi_{[a]}(\om) \inf \{ n\geq 1 : \sigma^n(\om)\in [a]   \}$
(where $\inf\emptyset:=\infty$ and $0 \cdot\infty :=0$). Set

$$Z_n(\Phi,a):=\sum\limits_{\sigma^n(\om)=\om}exp(\Phi_n(\om))\chi_{[a]}(\om)$$
and
$$Z^*_n(\Phi,a):=\sum\limits_{\sigma^n(\om)=\om}exp(\Phi_n(\om))\chi_{\{ n_a= n \}}(\om).$$

The \textit{Gurevich pressure} of $\Phi$ is defined by

$$P_G(\Phi)=\lim\limits_{n\to\infty}\frac 1n \log \sum\limits_{\sigma^n(\om)=\om}exp(\Phi_n(\om))\chi_{[a]}(\om)$$

for some $a\in S$.\\

If $(S^{\mathbb{Z}}_A,\sigma_A)$ is topologically mixing and $\Phi$ has a summable variation, then the above limit exists and it is independent of $a\in S$, \cite{Sar99}.\\
Let $\lambda=expP_G(\Phi).$ We say $\Phi$ is\textit{ positive recurrent} if  $\sum\lambda^{-n}Z_n(\Phi,a)=\infty,\ \ \sum n\lambda^{-n}Z^*_n(\Phi,a)<\infty. $

By $Erg(\sigma)$ we denote the set of all $\sigma$- invariant ergodic Borel probability measures and 

$$Erg_{\Phi}(\sigma)=\{\nu\in Erg(\sigma): \int_{S^{\mathbb{Z}}}\Phi d\nu>-\infty\}.$$

A $\sigma$- invariant measure $\nu_{\Phi}$ is an\textit{ equilibrium measure} for $\Phi$ provided

$$P(\Phi):=\underset{\nu\in Erg_{\Phi}(\sigma)}{sup}\{ h_{\nu}(\sigma)+\int\Phi d\nu\} =h_{\nu_{\Phi}}(\sigma)+\int\Phi d\nu_{\Phi}$$

where $h_{\nu}(\sigma)$ denotes the measure-theoretic entropy of $\sigma$ with respect to $\nu.$
We will call the quantity $P(\Phi)$, the \textit{variational pressure} of $\Phi$.

A measure $\nu=\nu_{\Phi}$ is a Gibbs measure for $\Phi$ provided that there exists a  constant $C>0$ such that for any cylinder set $[a_0,\cdots, a_{n-1}]$ and any $\om\in [a_0,\cdots, a_{n-1}]$ one has

$$C^{-1}\le\frac{\nu([a_0,\cdots, a_{n-1}])}{exp(-nP_G(\Phi)+\Phi_n(\om))}\le C.$$

For a $\sigma$- invariant measure $\nu$ we define $\nu\circ\sigma(E)=\sum\limits_{a\in S}\nu(\sigma(E\cap[a])).$ It is easy to verify that $\nu$ is absolutely continuous with respect to $\nu\circ\sigma.$ We define the the \textit{Jacobian} of $\nu$ is defined by $g_{\nu}:=\frac{d\nu}{d\nu\circ\sigma}.$ A non-singular measure which is finite on cylinders is called \textit{conformal} for a potential $\Phi$ if there exists a constant
$\lambda>0$ such that $g_{\nu}=\lambda^{-1}exp\Phi,$ $\nu\circ\sigma-$a.e. \\

The Ruelle's operator for a potential $\Phi$ is given by $L_{\Phi}f=\sum\limits_{\sigma(\omega)=x}e^{\Phi(\omega)}f(\omega).$
Suppose $S^{\mathbb{Z}}_A$ is topologically mixing and $\Phi$ is a positive recurrent potential with finite Gurevich pressure and summable variation.
Then by generalized Ruelle's Perron-Frobenious theorem (Theorem 4.9, \cite{Sar15}) there exist a positive continuous function $h$ and a measure $\nu$ which is finite on cylinders such that $L_{\Phi}h=\lambda h,\ \ L^*_{\Phi}\nu=\lambda\nu \text{and} \int hd\nu=1.$ Here $h$ and $\nu$ are called an eigenfunction and eigenmeasure of Ruelle's operator $L_{\Phi}.$  The measure $hd\nu$ is called Ruelle-Perron-Frobenious(RPF) measure for $\Phi.$ In this case $\lambda=expP_G(\Phi).$ 

\subsubsection{Inducing on Markov shifts}

Fix some state $a\in S$. Set $\bar{S}_A:=\{ [\bar{a}]=[aa_1\ldots a_n] : n\geq 0, a_i\neq a, [\bar{a}a]\neq \emptyset \}$,
and let $\bar{\sigma}: \bar{S}^{\mathbb{Z}}_A \to \bar{S}^{\mathbb{Z}}_A $ be the left shift.
For every $\p: S^{\mathbb{Z}}_A \to \mathbb{R}$ we set
$$ \bar{\p} := \left(   \sum_{k=0}^{n_a -1}  \p\circ \sigma^k   \right)  \circ \bar{\pi},  $$

where $\bar{\pi}:  \bar{S}^{\mathbb{Z}}_A \to [a] $ is the canonical projection.  Namely,
$$\bar{\pi}(\ldots[\bar{a}_{-1}][\bar{a}_0][\bar{a}_1][\bar{a}_2]\ldots ):= (\ldots \bar{a}_{-1}\bar{a}_0\bar{a}_1\ldots).$$

\subsubsection{Symbolic representation of the tower map}
We now construct a particular Markov partition of $\hat{X}$ for the tower map $\hat{f}$ which will be used for our purpose. For $J\in S$ and 
 $0<k\leq \tau(J)-1$ set $\tilde{X}_{J,k}=\{(x,k): x\in J \}. $  Let $\hat{S}=\{\tilde{X}, \tilde{X}_{J,k}\}$.
Then $\hat{f}: \hat{X}\to \hat{X}$ can be represented as a countable Markov shift $\hat{\sigma}_A:\hat{S}^{\mathbb{Z}}_A \to \hat{S}^{\mathbb{Z}}_A$,
where allowed transitions are  $\tilde{X}\rightarrow \tilde{X}_{J,1},\ $  $\tilde{X}_{J,k}\rightarrow \tilde{X}_{J,k+1},\ \ k<\tau(J)-1$ and $\tilde{X}_{J,\tau(J)-1}\rightarrow \tilde{X}.$
 
Similarly as in Section \ref{def: induced} we obtain a coding map $\pi_1: \hat{S}^{\mathbb{Z}}_A \to \bigcup_{J\in S} \bigcup_{k=0}^{\tau(J)-1}\hat{f}^k(\bar{J}) $
which is one-to-one on a certain set $\hat{\Omega}$ with $\pi_1(\hat{\Omega})=\hat{X}$ and by Condition $\textbf{(I3)}$, introduced in Section \ref{sec:Is}, the set $ \hat{S}^{\mathbb{Z}}_A \setminus \hat{\Omega}$ supports no $\hat{\sigma}_A$-invariant measure which gives positive weight to any open set.

The above coding allows us to define regularity properties of functions on $\hat{X}$ in terms of the functions on the TMS $(\hat{S}_A^{\mathbb{Z}},\hat{\sigma}_{A}).$ Namely, we say that a function $\hat{\varphi}$ defined on $\hat{X}$ has property $\alpha$ if the function given by $\hat{\Phi}:=\hat{\varphi}\circ\pi_1$ has property $\alpha$.

\subsection{Proof of Theorem \ref{thm:generalcase}}\label{proof:thm:generalcase}
Using the symbolic representation of $(\tilde{X},\tilde{f})$ and $(\hat{X},\hat{f})$, to show the Bernoulli property of the system $(\hat{X},\hat{f},\hat{\mu})$
it is enough the show the following general statement.

Let $(S_A^{\mathbb{Z}},\sigma)$ be a countable Markov shift. 
Choose a state $s\in S$ and let $(\bar{S}^{\mathbb{Z}},\bar{\sigma})$ be the induced shift on $s\in S$, that is, the first return to $[s]$.
Let $\mu$ be a $\bar{\sigma}$-invariant, ergodic measure on $[s]$.
Denote by $\hat{\mu}$ the measure on $S_A^{\mathbb{Z}}$ which is the lifted measure from $\mu$.
Conditions $\textbf{(B1)}$ and $\textbf{(B2)}$ applied to this setting become the following.

\textbf{(S1)} For every finite $S^*\subset \bar{S}$ there exists a constant $C^*=C^*(S^*)>1$ such that for every
pair of cylinders $[A]=[\bar{a}_1\ldots \bar{a}_{n_1}]$, $[B]=[\bar{b}_1\ldots \bar{b}_{n_2}]$,
if $a_{n_1}\in S^*$ or $b_{1}\in S^*$, and $[AB]\neq \emptyset$, then
\begin{equation}\label{eqn:directproductsym}
\frac{1}{C^*}\leq \frac{\mu[AB]}{\mu[A]\mu[B]}\leq C^*.
\end{equation}

Given two cylinders $A=[\bar{a}_1\ldots \bar{a}_{n_1}]$, $B=[\bar{b}_1\ldots \bar{b}_{n_2}]$, $M\in \mathbb{N}$, and $k\geq M$ we define
$$ S([A],[B],M,k) := \{ x\in    [A] \cap \bar{\sigma}^{-(n_1+M)}[B]  | \tau(\bar{\sigma}^{n_1}x) + \ldots + \tau(\bar{\sigma}^{n_1+M-1}x)=k   \}  .  $$

\textbf{(S2)} For any $\delta>0$ there exists $m_0\in\mathbb{N}$ such that for any $m\geq m_0$ and for any finite collection $\gamma$ of $m-$cylinders
$[C]=[\bar{c}_1 \ldots \bar{c}_m]$,  $\bar{c_i}\in \bar{S}$, there exists $K\in\mathbb{N}$ such that the following statement holds:

Let $C, C'\in \gamma$, $k>K$, and let $[A]=[\bar{a}_1 \ldots \bar{a}_{n_1}]$, $[B]=[\bar{b}_1 \ldots \bar{b}_{n_2}]$, $\bar{a}_i,\bar{b}_i\in \bar{S}$ be any two cylinders. Assume that $[AC], [C'B] \neq \emptyset$. Then
\begin{equation}
\frac{e^{-\delta}}{Q_{\mu}}\leq \frac{\sum_{M=1}^{k} \mu   (  S([AC],[C'B],M,k)  )      }{   \mu([AC])\mu([C'B])   }\leq \frac{e^{\delta}}{Q_{\mu}}.
\end{equation}
We have the following.

\begin{TheoremA}\label{thm:bern.symb0}
If $\mu$ satisfies Conditions \textbf{(S1)} and \textbf{(S2)}, then $\hat{\mu}$ is Bernoulli.
\end{TheoremA}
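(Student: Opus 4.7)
The plan is to invoke Ornstein's isomorphism theory. It suffices to show that $(S_A^{\mathbb{Z}},\sigma,\hat{\mu})$ is very weak Bernoulli (VWB) with respect to the countable generating partition $\mathcal{P}=\{[a]:a\in S\}$, and then to appeal to the standard extension of Ornstein's theorem to countable generators via approximation by finite subalphabets. Finiteness of the entropy $h_{\hat{\mu}}(\sigma)=h_{\mu}(\bar{\sigma})/Q_{\mu}$ follows from Abramov's formula together with $Q_{\mu}<\infty$, which is implicit in the very statement of \textbf{(S2)}. Concretely, VWB amounts to showing that for every $\epsilon>0$ there exists $N_0$ such that for all $n\geq 1$ and for a collection $\mathcal{A}$ of atoms of $\bigvee_{i\geq N_0}\sigma^{i}\mathcal{P}$ with $\hat{\mu}(\mathcal{A})\geq 1-\epsilon$, the $\bar{d}$-distance between the conditional law of $(\omega_0,\ldots,\omega_{n-1})$ given an atom of $\mathcal{A}$ and its unconditional law is at most $\epsilon$. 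Note that VWB will simultaneously yield mixing of $\hat{\mu}$, so no separate mixing argument is required.

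First I would exploit \textbf{(S1)} to show that the \emph{induced} system $([s],\bar{\sigma},\mu)$ is itself VWB with respect to the first-return partition $\{[\bar{a}]:\bar{a}\in\bar{S}\}$. Indeed, \textbf{(S1)} is a uniform quasi-product estimate across a junction whose endpoint lies in a finite subalphabet $S^{*}\subset\bar{S}$. Choosing $S^{*}$ to carry $\mu$-mass at least $1-\epsilon$ and following the standard countable-Markov-shift arguments of Bowen and Sarig, one builds an explicit coupling between the conditional and unconditional laws of long forward cylinders whose Radon-Nikodym density is pinched between $1/C^{*}$ and $C^{*}$ on the bulk; this forces $\bar{d}$-closeness once one marginalizes the exceptional mass. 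This step is essentially the argument behind the Bernoulli conclusion of Theorem~\ref{thm: induced}(4), transposed to the abstract measure-theoretic setting.

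The substantive work is to lift VWB from the induced system to the tower, and here \textbf{(S2)} is decisive. Condition \textbf{(S2)} is a renewal-type statement: for $k$ large and for $m$-cylinders $C,C'$ in a finite family $\gamma$ carrying most of the mass, the probability that, starting from the past cylinder $[AC]$, one sees an arbitrary number $M$ of induced returns with total tower time exactly $k$ followed by $[C'B]$, equals $Q_{\mu}^{-1}\mu([AC])\mu([C'B])$ up to a multiplicative error $e^{\pm\delta}$. I would use this as follows. Given a past atom in $S_A^{\mathbb{Z}}$, read off the last $m_0$ completed induced returns to obtain $[AC]\in\gamma$. A typical future window $\omega_0\ldots\omega_{n-1}$ decomposes into (i) the remainder of the tower fibre containing time $0$, (ii) $M$ complete induced blocks of total tower length $k$, and (iii) an initial partial segment of the next induced block. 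By \textbf{(S2)} the joint distribution of the renewal time $k$ and the post-junction symbols $[C'B]$ is a near product of the renewal density $Q_{\mu}^{-1}$ and the unconditional induced distribution, which does not see the past at all; combining this with VWB of $\mu$ established in the previous step, one couples the conditional and unconditional laws on $(\omega_0,\ldots,\omega_{n-1})$ by first matching renewal times (losing $\delta$ per joint return through \textbf{(S2)}) and then matching the symbols strictly inside each matched return (losing $\epsilon$ in $\bar{d}$ through the induced VWB coupling).

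The main obstacle will be controlling the two boundary effects: the partial induced returns at the two ends of the window $[0,n-1]$, to which \textbf{(S2)} does not directly apply, and the "waiting paradox" — the first complete return after time $0$ may be very long. The standard device is to discard an initial and final segment of length $o(n)$, absorbing the lost symbols into the $\epsilon$-error by enlarging the finite subalphabet $S^{*}$ so that the $\mu$-probability of an induced symbol with tower time exceeding the cutoff is negligible, and similarly pushing the waiting paradox contribution below $\epsilon$ once the constant $K$ in \textbf{(S2)} is chosen large enough. Once these tail estimates are combined with the coupling from \textbf{(S1)} on the bulk of the induced symbols and with the renewal decoupling from \textbf{(S2)}, one concludes VWB for $\hat{\mu}$, and hence the Bernoulli property via Ornstein's theorem.
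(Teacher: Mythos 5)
Your high-level plan (reduce to Ornstein theory, use \textbf{(S1)} for the quasi-product structure and \textbf{(S2)} for decoupling across a long gap) points in the right direction, but the first step contains a genuine gap. You claim that \textbf{(S1)} alone makes the induced system $([s],\bar{\sigma},\mu)$ VWB, by "pinching" a coupling density between $1/C^{*}$ and $C^{*}$. Condition \textbf{(S1)} is a juxtaposition estimate with a \emph{fixed} constant $C^{*}>1$: it controls $\mu[AB]/(\mu[A]\mu[B])$ for cylinders concatenated with no gap, and the constant does not improve as the cylinders lengthen or as any separation grows. Such a bound does not imply asymptotic independence, nor even mixing of the induced system (any non-product mixing Markov measure on a finite alphabet already has ratios bounded away from $0$ and $\infty$ without being $\bar{d}$-close to product), so no amount of "marginalizing the exceptional mass" forces $\bar{d}$-closeness. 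In the paper's proof the asymptotic independence -- a multiplicative error $e^{\pm\delta}$ with $\delta$ arbitrarily small -- comes entirely from \textbf{(S2)}, applied \emph{once} across the single gap of tower-length $k$ separating the past cylinder $[AC]$ from the future cylinder $[C'B]$; \textbf{(S1)} is used only to show that the exceptional configurations (intermediate blocks $D$ that do not begin and end with a cylinder from the high-mass collection $\gamma$, i.e.\ the terms $S_2,S_3,S_4$ in the decomposition of $\hat{\mu}(A\cap B)=\sum_{D}\hat{\mu}([ADB])$) contribute at most $\delta\,\hat{\mu}(A)\hat{\mu}(B)$. Relatedly, the paper never needs (and never proves) that the induced system is Bernoulli or mixing under \textbf{(S1)}--\textbf{(S2)}; it establishes the per-cylinder estimate $|\hat{\mu}(A\cap B)-\hat{\mu}(A)\hat{\mu}(B)|<\epsilon\,\hat{\mu}(A)\hat{\mu}(B)$ for cylinders separated by $k\geq K$ with endpoints in a finite subalphabet, and then quotes Sarig's reduction from this weak Bernoulli property (for an exhausting sequence of finite partitions) to the Bernoulli property, bypassing any explicit $\bar{d}$-coupling.

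A secondary problem is your phrase "losing $\delta$ per joint return through \textbf{(S2)}." Condition \textbf{(S2)} is a renewal estimate across one gap, summed over the number $M$ of induced returns filling that gap; it is not an estimate applied once per return inside the window $[0,n-1]$. If you genuinely incurred a multiplicative loss $e^{\pm\delta}$ at each of the $\sim n/Q_{\mu}$ returns in the window, the accumulated error would not be small, and the $\bar{d}$-normalization would not rescue a multiplicative blow-up of the conditional law. To repair your argument you would need to restructure it so that \textbf{(S2)} is invoked a bounded number of times (essentially once, at the junction between the conditioning past and the future window), and you would need a genuine source of asymptotic independence for the symbols strictly inside the window -- which, under the stated hypotheses, again has to be extracted from \textbf{(S2)} rather than \textbf{(S1)}. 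At that point you are effectively reconstructing the paper's direct computation.
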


\begin{proof}
On $[s]$ we define a partition $\bar{S}$ by cylinders of the form $[\bar{a}]=[sa_1a_2\ldots a_{\tau-1}]$, where $\tau=\tau(\bar{a})>1$, $a_i\in S\setminus{s}$, and $[a_{\tau-1}s]\neq\emptyset$.
Consider a partition $\hat{S}$ of $S_A^{\mathbb{Z}}$ by sets of the form $[\bar{a}^k]:=\sigma^k([\bar{a}])$, where $\bar{a}\in \bar{S}$
and $0\leq k\leq \tau(\bar{a})-1$. In other words, if $[\bar{a}]=[sa_1a_2\ldots a_{\tau-1}]$, then $[\bar{a}^k]= _{-k}[sa_1a_2\ldots a_{\tau-1}]=_{-k}[\bar{a}]$.
Observe that $(S_A^{\mathbb{Z}},\sigma)$ can be identified with $(\hat{S}^{\mathbb{Z}}_{\hat{A}},\hat{\sigma})$, where 
the allowed transitions are $\bar{a}^k \to \bar{a}^{k+1}$ for $k<\tau(\bar{a})-1$ and $\bar{a}^{\tau(\bar{a})-1} \to \bar{b}^0$
for $\bar{a}, \bar{b}\in \bar{S}$.

In addition, $\hat{\mu}([\bar{a}^k])=\hat{\mu}([\bar{a}^0 \bar{a}^1\ldots \bar{a}^{\tau(\bar{a})-1}])=\frac{1}{Q_{\mu}} \mu([\bar{a}]).     $

By the argument presented in \cite[proof of Theorem 3.1]{SarBernoulli} it is enough to show the following.

\begin{claim}\label{claimm}
For any finite collection $S^*\subset \bar{S}$
and any $\epsilon>0$ there exists $K>0$ such that if $A:=_{-(\tau(\bar{a}_1) +\ldots+ \tau(\bar{a}_{n_1-1} ) )}[\bar{a}_1\ldots \bar{a}_{n_1}]$
and $B:=_{k-\tau(\bar{b}_1)}[\bar{b}_1\ldots \bar{b}_{n_2}]$ are two nonempty cylinders such that $\bar{b}_1, \bar{a}_{n_1}\in S^*$
and $k\geq K,$ then 
\begin{equation}\label{claim}
 |\hat{\mu}(A\cap B) - \hat{\mu}(A)\hat{\mu}(B) | < \epsilon \hat{\mu}(A)\hat{\mu}(B).   
 \end{equation}
\end{claim}

Claim \ref{claimm} implies that for every finite collection of states, $\tilde{S}\subset \hat{S}$, the corresponding partition, $\{[s] | s\in \tilde{S}  \} \cup \{ \bigcup_{s\in \hat{S}\setminus \tilde{S}} [s]   \}$, is weak Bernoulli \cite{SarBernoulli}.
This in turn implies the Bernoulli property of $\hat{\mu}$ (see \cite{SarBernoulli} for details). \\

We now prove Claim\ref{claimm}.

Fix small $\delta>0$ to be determined later and choose:
\begin{itemize}
\item a constant $C^*=C^*(S^*)$ as in Condition \textbf{(S1)}
\item a natural number $m$ as in Condition \textbf{(S2)}
\item a finite collection $\gamma$ of $m-$cylinders $[C]=[\bar{c}_1\ldots\bar{c}_{m}]$ with $\bar{c}_i\in \bar{S}$
such that $\mu(\bigcup_{C\in\gamma}[C])> 1-\alpha, $ where $\alpha:=\min\{ \frac{\delta}{Q_{\mu}(C^*)^2}, \frac{1-e^{-\delta}}{C^*}  \} $.
\item let $K_1$ be as in Condition \textbf{(S2)}.
\item Define $K_2:=\max_{C\in\gamma}(\tau(\bar{c}_1)+\ldots+\tau(\bar{c}_{m}))$ and $K_3:=\max_{\bar{a}\in S^*}\tau(\bar{a})$.
\item Let $K>K_1+K_2+2 K_3$. 
\end{itemize}

For fixed $C,C'\in\gamma$ and $A,B$ as in the Claim above we denote $\bar{K}:=K -\tau(\bar{b}_1) -\tau(\bar{c}'_1) -\ldots -\tau(\bar{c}'_{m}) -\tau(\bar{c}_1)-\ldots-\tau(\bar{c}_{m})-\tau(\bar{a}_{n_1})$. 

Condition \textbf{(S2)} gives that
\begin{equation}\label{eqn:muhat10}
\begin{aligned}
&\hat{\mu}([AC]\cap\hat{\sigma}^{-(K-\tau(\bar{b}_1)-\tau(\bar{c}'_1)- \ldots- \tau(\bar{c}'_{m}) +\tau(\bar{a}_1) +\ldots + \tau(\bar{a}_{n_1-1})) }[C'B] )=\\
&=\frac{1}{Q_{\mu}} \sum_{M=1}^{\bar{K}} \mu(S([AC],[C'B],M,\bar{K})) \\     
    &=   \frac{e^{\pm \delta}}{Q_{\mu}^2} \mu([AC])\mu([C'B])\\
&=e^{\pm \delta} \hat{\mu}([AC])\hat{\mu}([C'B])
\end{aligned} 
\end{equation}

We now continue with the proof of the Claim. 
We follow the line of the argument presented in \cite{SarBernoulli}.
Define
$$\mathfrak{D}:=\{ (\bar{d}_1,\ldots ,\bar{d}_l) | \bar{d}_i\in\bar{S}, l\geq 1, \tau(\bar{d}_1)+\ldots + \tau(\bar{d}_l)=K-\tau(\bar{a}_{n_1})-\tau(\bar{b}_1)    \}  .  $$
We calculate,
$$ \hat{\mu}(A\cap B) = \sum_{D\in\mathfrak{D}} \hat{\mu} ([ADB])  
  =S_1 + S_2 + S_3 + S_4, \text{ where}$$

$$  S_1:=   \sum_{\substack{D\in\mathfrak{D}, ~ l\geq m\\ [\bar{d}_1,\ldots ,\bar{d}_m]\in\gamma \\  [\bar{d}_{l-m},\ldots ,\bar{d}_l]\in\gamma }} \hat{\mu} ([ADB]); ~~ S_2:=  \sum_{\substack{D\in\mathfrak{D}, ~ l\geq m\\ [\bar{d}_1,\ldots ,\bar{d}_m]\notin\gamma \\  [\bar{d}_{l-m},\ldots ,\bar{d}_l]\in\gamma }} \hat{\mu} ([ADB]) ;$$   
$$ S_3:= \sum_{\substack{D\in\mathfrak{D}, ~ l\geq m \\  [\bar{d}_{l-m},\ldots ,\bar{d}_l]\notin\gamma }} \hat{\mu} ([ADB]); ~~ S_4:= \sum_{\substack{D\in\mathfrak{D}, \\ l\leq m  }} \hat{\mu} ([ADB])  .$$

We start by estimating $S_1$. By (\ref{eqn:muhat10}) we have that 
\begin{align*}
 \sum_{\substack{D\in\mathfrak{D}, ~ l\geq m\\ [\bar{d}_1,\ldots ,\bar{d}_m]\in\gamma \\  [\bar{d}_{l-m},\ldots ,\bar{d}_l]\in\gamma }} \hat{\mu} ([ADB]) &= e^{\pm  \delta} \sum_{C,C'\in \gamma}  \hat{\mu}([AC])\hat{\mu}([C'B]) \\
    &= e^{\pm  \delta} \left(    \sum_{C\in \gamma}  \hat{\mu}([AC]) \right)   \left(    \sum_{C'\in \gamma} \hat{\mu}([C'B])\right). 
    \end{align*}
We estimate the first sum.
\begin{align*}
  \hat{\mu}(A)  &\geq   \sum_{C\in \gamma}  \hat{\mu}([AC])  = \hat{\mu}(A) - \frac{1}{Q_{\mu}}  \sum_{C\notin \gamma}  \mu([AC]) \\
 &\geq \hat{\mu}(A) - \frac{1}{Q_{\mu}} C^* \sum_{C\notin \gamma}  \mu([A])\mu([C]) \\
 &= \hat{\mu}(A) - \hat{\mu}(A) C^* \sum_{C\notin \gamma} \mu([C]) \\
 &\geq  \hat{\mu}(A) (1-C^*\alpha)\geq  \hat{\mu}(A) e^{-\delta} .  
 \end{align*}
 
 Similarly we obtain that  $$ \hat{\mu}(B)  \geq \sum_{C'\in \gamma} \hat{\mu}([C'B])\geq  \hat{\mu}(B) e^{-\delta} .$$
 Consequently, $$ S_1=e^{\pm 3 \delta} \hat{\mu}(A)\hat{\mu}(B).$$

We now proceed with the estimate of $S_2$. Using Condition \textbf{(S1)}, we obtain that
\begin{align*}
 S_2&= \sum_{\substack{D\in\mathfrak{D}, ~ l\geq m\\ [\bar{d}_1,\ldots ,\bar{d}_m]\notin\gamma \\  [\bar{d}_{l-m},\ldots ,\bar{d}_l]\in\gamma }} \hat{\mu} ([ADB]) 
 =  \sum_{\substack{D\in\mathfrak{D}, ~ l\geq m\\ [\bar{d}_1,\ldots ,\bar{d}_m]\notin\gamma \\  [\bar{d}_{l-m},\ldots ,\bar{d}_l]\in\gamma }} \frac{1}{Q_{\mu}}\mu ([ADB])\\
 &\leq  \sum_{\substack{D\in\mathfrak{D}, ~ l\geq m\\ [\bar{d}_1,\ldots ,\bar{d}_m]\notin\gamma \\  [\bar{d}_{l-m},\ldots ,\bar{d}_l]\in\gamma }} \frac{1}{Q_{\mu}}  (C^*)^2\mu ([A])\mu([D])\mu([B])\\
  &=   Q_{\mu}  (C^*)^2\hat{\mu} ([A])\hat{\mu}([B]) \sum_{\substack{D\in\mathfrak{D}, ~ l\geq m\\ [\bar{d}_1,\ldots ,\bar{d}_m]\notin\gamma \\  [\bar{d}_{l-m},\ldots ,\bar{d}_l]\in\gamma }} \mu([D])\\
 &\leq   Q_{\mu}  (C^*)^2\hat{\mu} ([A])\hat{\mu}([B]) \sum_{C=(\bar{c}_1\ldots\bar{c}_m)\notin\gamma }\mu([C])\\
 &\leq   Q_{\mu}  (C^*)^2  \alpha \hat{\mu} ([A])\hat{\mu}([B]) \leq \delta\hat{\mu} ([A])\hat{\mu}([B]).
 \end{align*}

Similarly we obtain that 
 $$S_3< \delta \hat{\mu} ([A])\hat{\mu}([B]).$$
It remains to estimate $S_4$.
For this observe that any cylinder $D=[\bar{d_1}\ldots\bar{d}_l]\in\mathfrak{D}$ with $l\leq m$
can be written as the following disjoint union 
$$ \bigcup_{(\bar{d'}_{l+1}\ldots\bar{d'}_{m})\in \bar{S}^{m-l}} [   \bar{d_1}\ldots\bar{d}_l \bar{d'}_{l+1}\ldots\bar{d'}_{m} ]. $$  
Each cylinder $[\bar{d_1}\ldots\bar{d}_l \bar{d'}_{l+1}\ldots\bar{d'}_{m} ]$ in the union above is of length exactly $m$. 
Clearly,
$[\bar{d_1}\ldots\bar{d}_l \bar{d'}_{l+1}\ldots\bar{d'}_{m} ] \notin \gamma$ because $\tau(\bar{d}_1)+\ldots + \tau(\bar{d}_l)=K-\tau(\bar{a}_{n_1})-\tau(\bar{b}_1) > K_2.$ 
We therefore estimate $S_4$ as follows,
\begin{align*}
 S_4&= \sum_{\substack{D\in\mathfrak{D}, \\ l\leq m  }} \hat{\mu} ([ADB]) 
\leq  Q_{\mu}  (C^*)^2\hat{\mu} ([A])\hat{\mu}([B])  \sum_{\substack{D\in\mathfrak{D}, \\ l\leq m  }}  \mu([D])\\
&=  Q_{\mu}  (C^*)^2\hat{\mu} ([A])\hat{\mu}([B])  \sum_{\substack{D=(\bar{d_1}\ldots\bar{d}_l)\in\mathfrak{D}, \\ l\leq m  }} 
\sum_{(\bar{d'}_{l+1}\ldots\bar{d'}_{m})\in \bar{S}^{m-l}} \mu([   \bar{d_1}\ldots\bar{d}_l \bar{d'}_{l+1}\ldots\bar{d'}_{m} ])\\
&\leq  Q_{\mu}  (C^*)^2\hat{\mu} ([A])\hat{\mu}([B])\sum_{C=(\bar{c}_1\ldots\bar{c}_m)\notin\gamma }\mu([C]) \\ 
&\leq   Q_{\mu}  (C^*)^2  \alpha \hat{\mu} ([A])\hat{\mu}([B]) \leq \delta \hat{\mu} ([A])\hat{\mu}([B])  .
\end{align*}
We conclude that 
$$ 
|\hat{\mu} (A\cap B) - \hat{\mu} (A)\hat{\mu}(B) | < (1-e^{-3\delta} + 3\delta) \hat{\mu} (A)\hat{\mu}(B).  
$$
To finish the proof of the Claim, choose $\delta < \min \{\frac{\epsilon}{6} , \log \left(  \frac{2-\epsilon}{2}   \right)^{-\frac{1}{3}}  \}.$\\
\end{proof}
To prove that $(X,f,\mu)$ is Bernoulli, we define an equivalence relation $\sim$ on $\hat{X}$ in the following way. Having two elements, $(x,k), (y,l)\in \hat{X}$
we will say that $(x,k)\sim (y,l)$ if and only if $f^k(x)=f^l(y)$. Consider a $\sigma-$algebra $\mathcal{A}$ on $\hat{X}$ generated by sets of the form
$$ A= \{  (x,k) |\text{ there is }(y,l)\in B \text{ such that } (x,k)\sim (y,l)   \} , $$
where $B\subset \hat{X}$ is a Borel set. Observe that $\mathcal{A}$ is $\hat{f}$-invariant. In addition, as a factor of $(\hat{X},\hat{f},\hat{\mu})$, the restriction $(\hat{X},\hat{f},\hat{\mu}, \mathcal{A})$ is Bernoulli \cite{Orn}.

On the other hand, $\hat{f}$ restricted to $\mathcal{A}$ can be thought of as a map defined on the space $\hat{Y}$ of equivalence classes
$\hat{Y}= \{ [\hat{x}]_{\sim} | \hat{x}\in \hat{X}   \}$, where we define $\hat{f}([\hat{x}]_{\sim} ) = [\hat{f}(\hat{x})]_{\sim} . $
We can define an isomorphism $I$ between $(X,f,\mu)$ and  $(\hat{Y},\hat{f},\hat{\mu})$ in the following way.
Given $p\in X$ let $k$ be the smallest positive integer such that $p=f^k(q)$ for some $q\in \tilde{X}$. Then define $I(p):= [q,k]_{\sim}$.
We conclude that $(X,f,\mu)$ and  $(\hat{Y},\hat{f},\hat{\mu})$ are isomorphic.
Therefore, $(X,f,\mu)$ is Bernoulli.

\subsection{Proof of Theorem \ref{thm: tower}}\label{proof:thm:tower}

We start with the proof of Statement (1).
Viewing $\tilde{f}:\tilde{X} \to \tilde{X}$ as an induced system from $\hat{f}$ and using Statement (1) of Theorem \ref{thm: manifold}, it is enough to show that
$P_L(\varphi) = P_L(\hat{\varphi})$. 
 By Proposition 4.3 in \cite{ind}, 
 for every $\tilde{\mu}\in Erg(\tilde{X},\tilde{f})$ for which $Q_{\tilde{\mu}}<\infty$ we have that
 $$ h_{\tilde{\mu}}(\tilde{f})= Q_{\tilde{\mu}} h_{\mathcal{L}(\tilde{\mu})}(f) \text{ and } \int_{\tilde{X}}\tilde{\varphi}d\tilde{\mu} =    Q_{\tilde{\mu}} \int_X \varphi d\mathcal{L}(\tilde{\mu}).  $$
 
 On the other hand, viewing $\tilde{f}:\tilde{X} \to \tilde{X}$ as an induced system from $\hat{f}$ and observing that $\tilde{\hat{\varphi}}=\tilde{\varphi}$
 we obtain that,
$$ h_{\tilde{\mu}}(\tilde{f})= Q_{\tilde{\mu}} ~h_{\hat{\mathcal{L}}(\tilde{\mu})}(\hat{f}) \text{ and }  \int_{\tilde{X}}\tilde{\varphi}d\tilde{\mu}=    Q_{\tilde{\mu}} \int_{\hat{X}} \hat{\varphi} d{\hat{\mathcal{L}}(\tilde{\mu})}.  $$
We conclude that $$ h_{\hat{\mathcal{L}}(\tilde{\mu})}(\hat{f}) = h_{\mathcal{L}(\tilde{\mu})}(f) \text{ and }  \int_{\hat{X}} \hat{\varphi} d\hat{\mathcal{L}}(\tilde{\mu}) =  \int_{M} \varphi d \mathcal{L}(\tilde{\mu}).  $$

Consequently, $P_L(\varphi) = P_L(\hat{\varphi})$. \\

Now we prove Statement (2). In fact, we show a more general result in a setting of symbolic dynamical systems.

Let $(S_A^{\mathbb{Z}},\sigma)$ be a topologically mixing countable Markov shift. 
Choose a state $s\in S$ and a potential $\varphi : [s] \to \mathbb{R}$. Assume that $\tilde{\varphi}$ is locally H\"older with respect to the induced shift $\bar{\sigma}$ on $[s]$, and that $P_G(\varphi)<\infty$.
Let $\mu$ be the unique ergodic equilibrium measure for $\varphi$. 
Denote by $\hat{\mu}$ the measure on $S_A^{\mathbb{Z}}$ which is the lifted measure from $\mu$.
We have the following.

\begin{TheoremB}\label{thm:bern.symb}
If $(S_A^{\mathbb{Z}},\sigma, \hat{\mu})$ is mixing, then it is Bernoulli.
\end{TheoremB}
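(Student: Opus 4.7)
The plan is to reduce Theorem B to the symbolic Theorem A proved immediately above (the one which characterizes Bernoullicity of $\hat{\mu}$ in terms of Conditions \textbf{(S1)} and \textbf{(S2)} on $\mu$). So the whole task becomes verifying \textbf{(S1)} and \textbf{(S2)} in the setting of Theorem B.

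As a preparation, I would note that the induced shift $(\bar{S}^{\mathbb{Z}},\bar{\sigma})$ on first-return blocks to $[s]$ is conjugate to the full shift on the countable alphabet $\bar{S}$, and the induced potential $\bar{\varphi}$ is locally H\"older with finite Gurevich pressure by hypothesis. Sarig's (and Mauldin-Urbanski's) thermodynamic formalism for countable Markov shifts then provides a unique Gibbs equilibrium measure $\mu$ for $\bar{\varphi}$, enjoying both the Gibbs property and exponential decay of correlations against locally H\"older observables on the full shift.

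Condition \textbf{(S1)} should follow directly from the Gibbs property. For any cylinder $[A]$ of length $n$ and any $\omega\in[A]$ one has
\[
C^{-1}\exp\bigl(-nP_G(\bar{\varphi})+\bar{\varphi}_n(\omega)\bigr) \leq \mu[A] \leq C\exp\bigl(-nP_G(\bar{\varphi})+\bar{\varphi}_n(\omega)\bigr),
\]
and the local H\"older continuity of $\bar{\varphi}$ keeps $\bar{\varphi}_{n_1+n_2}$ comparable to $\bar{\varphi}_{n_1}+\bar{\varphi}_{n_2}\circ\bar{\sigma}^{n_1}$ along any nonempty concatenation. Applying this to $[A]$, $[B]$, and $[AB]$ yields a two-sided bound on $\mu[AB]/(\mu[A]\mu[B])$ by a universal constant, so in particular the bound holds on any finite $S^*\subset\bar{S}$.

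The main obstacle will be \textbf{(S2)}, which is a uniform mixing estimate for $\hat{\mu}$ with the correct normalization $1/Q_{\mu}$. Identifying cylinders of the induced system with their images in the lifted system, one checks that
\[
\sum_{M=1}^{k}\mu\bigl(S([AC],[C'B],M,k)\bigr) = Q_{\mu}\,\hat{\mu}\bigl([AC]\cap\sigma^{-k}[C'B]\bigr),
\]
so \textbf{(S2)} reduces to showing that
\[
\hat{\mu}\bigl([AC]\cap\sigma^{-k}[C'B]\bigr) = \frac{e^{\pm\delta}}{Q_{\mu}^{2}}\,\mu[AC]\,\mu[C'B]
\]
for all $k\geq K(\gamma,\delta)$, uniformly over arbitrary $A, B$ while $C, C'$ range over a finite collection $\gamma$ of $m$-cylinders. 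To produce this uniform asymptotic I would invoke Sarig's renewal theorem for positive recurrent countable Markov shifts: the transfer operator of $\varphi$ on the lifted shift admits a renewal decomposition whose leading term, under the mixing hypothesis on $\hat{\mu}$, converges to the rank-one projection with weight $1/Q_{\mu}$. Combined with the spectral gap of the normalized Ruelle operator $L_{\bar{\varphi}-P_G(\bar{\varphi})}$ acting on locally H\"older functions on the induced full shift (used to absorb the dependence on $A$ and $B$ through the Gibbs bounds), this should yield the required two-sided estimate with explicit constant $1/Q_{\mu}$ and with a threshold $K$ depending only on $\gamma$ and $\delta$. Positive recurrence and mixing are built into the hypotheses (mixing is assumed on $\hat{\mu}$; positive recurrence follows from the existence of a Gibbs equilibrium measure for a locally H\"older potential of finite Gurevich pressure). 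With \textbf{(S1)} and \textbf{(S2)} in hand, Theorem A from Section~\ref{proof:thm:generalcase} applies and gives the Bernoulli property of $\hat{\mu}$, completing the proof.
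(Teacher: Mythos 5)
Your overall strategy coincides with the paper's: reduce to Theorem A and verify Conditions \textbf{(S1)} and \textbf{(S2)} for the induced equilibrium measure $\mu$. Your treatment of \textbf{(S1)} via the Gibbs property plus local H\"older continuity is essentially the paper's quasi-product lemma quoted from Sarig, and is fine.

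The gap is in \textbf{(S2)}, precisely at the step you flag with ``should yield.'' Two of the tools you name cannot deliver the estimate in the required form. First, ``absorbing the dependence on $A$ and $B$ through the Gibbs bounds'' can only produce a fixed multiplicative constant $C^*$ determined by the Gibbs constant and the total variation sum, whereas \textbf{(S2)} demands that the ratio lie in $[e^{-\delta}/Q_{\mu},\, e^{\delta}/Q_{\mu}]$ for an arbitrarily small prescribed $\delta$; no inequality with a fixed constant can achieve this. Second, the operator renewal theorem (and the spectral gap) gives convergence that is uniform only over bounded sets in the locally H\"older Banach space, and the indicators $\chi_{[AC]}$, $\chi_{[C'B]}$ have H\"older norms that blow up with the lengths of $A$ and $B$; so the threshold $K$ extracted this way a priori depends on $A$ and $B$, which is exactly what \textbf{(S2)} forbids. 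The missing idea --- and the reason \textbf{(S2)} carries the free parameter $m\geq m_0(\delta)$ --- is a decoupling step: choose $m_0$ so that $\sup_{n\geq 1}Var_{n+m_0}\phi_n\leq \delta/3$; then for every one-sided $x\in[AC]$ one has $e^{\phi_n(x)}=e^{\pm\delta/3}\,\mu([AC])/\mu([C])$, where $n$ is the length of $A$ (the paper's Lemma \ref{lem:ber1}), so after applying $L^{n}$ the observable $\chi_{[AC]}$ may be replaced by $\frac{\mu([AC])}{\mu([C])}\chi_{[C]}$ up to error $e^{\pm\delta/3}$, and symmetrically for $B$. This reduces the estimate to the finitely many pairs $(C,C')\in\gamma\times\gamma$, for which no renewal theorem is needed at all: the mixing hypothesis on $\hat{\mu}$, applied to these finitely many pairs, directly furnishes a single $K=K(\gamma,\delta)$. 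With that decoupling lemma in place your reduction goes through; without it, the uniformity in $A$ and $B$ remains unproved.
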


Our argument follows the idea presented by Sarig in \cite{SarBernoulli}. The result in
\cite{SarBernoulli} considers equilibrium measures for locally H\"older potential functions.
We remark that our result is stronger, as it only requires that the induced potential is locally H\"older.

\begin{proof}
In view of Theorem \ref{thm:generalcase}, it is enough to verify that $\mu$ satisfies Conditions \textbf{(S1)} and \textbf{(S2)}
in Section \ref{proof:thm:generalcase}.
As in the proof of Theorem \ref{thm:generalcase}, we identify  $(S_A^{\mathbb{Z}},\sigma)$ with $(\hat{S}^{\mathbb{Z}}_{\hat{A}},\hat{\sigma})$.

The properties of $\mu$ have been described by Sarig in \cite{SarBernoulli}. It is done by first studying the space of one-sided sequences $(\bar{S}^{\mathbb{N}},\bar{\sigma})$. For any fixed $\phi :\bar{S}^{\mathbb{N}} \to \mathbb{R} $ one defines a Ruelle's operator $L_{\phi}$ in the following way. If $F :\bar{S}^{\mathbb{N}} \to \mathbb{R} $ and $x^+\in \bar{S}^{\mathbb{N}} $, then
$$ L_{\phi} F (x^+) :=  \sum_{\bar{\sigma}(y^+)=x^+} e^{\phi(y^+)} F(y^+). $$

The following is shown in \cite{SarBernoulli}.

\begin{lemma}\label{lem:Sarig}
There exists a locally H\"older continuous function $\phi:\bar{S}^{\mathbb{N}} \to \mathbb{R}$ and the corresponding Ruelle's operator
$L  := L_{\phi} $ such that:
\begin{enumerate}
\item for any cylinder $[A]=[\bar{a}_1\ldots \bar{a}_{n}]$ with $ \bar{a}_i\in \bar{S}$ the measure
$\mu([A])$ is the pointwise limit of the sequence $L^n\chi_{[A]}$;
\item For any two functions $F,G:  \bar{S}^{\mathbb{N}} \to \mathbb{R} $ 
\begin{equation}\label{eq:sar1}
\int F(G\circ \bar{\sigma}^n)d\mu = \int (L^nF)Gd\mu;
\end{equation}
\item For every finite $S^*\subset \bar{S}$ there exists a constant $C^*=C^*(S^*)>1$ such that for every
pair of cylinders $[A]=[\bar{a}_1\ldots \bar{a}_{n}]$, $[B]=[\bar{b}_1\ldots \bar{b}_{m}]$,
\begin{enumerate}
\item if the last symbol in $A$ is in $S^*$ and $[AB]\neq \emptyset$, then $\frac{1}{C^*}\leq \frac{\mu[AB]}{\mu[A]\mu[B]}\leq C^*$;
\item if the first symbol in $A$ is in $S^*$ and $[BA]\neq \emptyset$, then $\frac{1}{C^*}\leq \frac{\mu[BA]}{\mu[B]\mu[A]}\leq C^*$.
\end{enumerate}
\end{enumerate}
\end{lemma}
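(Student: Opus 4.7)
The plan is to construct $\phi$ by first reducing to a one-sided shift and then normalizing via the Ruelle--Perron--Frobenius (RPF) machinery for countable Markov shifts; the three stated properties then correspond respectively to the convergence of RPF iterates, the duality of the transfer operator with respect to the invariant measure, and the Gibbs property together with local H\"older regularity.

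First I would reduce to the one-sided setting. Since $\tilde{\varphi}$ is locally H\"older on $(\bar{S}^{\mathbb{Z}},\bar{\sigma})$, Sinai's cohomology lemma (in the form valid for countable shifts with summable variations) produces a locally H\"older function $\psi:\bar{S}^{\mathbb{N}}\to\mathbb{R}$ that is cohomologous to $\tilde{\varphi}$ via a bounded transfer function. In particular $P_G(\psi)=P_G(\tilde{\varphi})<\infty$, the equilibrium measure of $\tilde{\varphi}$ descends to the one-sided measure associated to $\psi$, and Gibbs constants are preserved up to a uniform factor.

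Second, I would apply the countable-shift RPF theorem (Theorem 4.9 of \cite{Sar15}, applicable since $\psi$ is positive recurrent with finite Gurevich pressure and summable variation): there exist $\lambda=e^{P_G(\psi)}$, a positive continuous eigenfunction $h$, and a conformal eigenmeasure $\nu$ (finite on cylinders) with $L_\psi h=\lambda h$ and $L_\psi^*\nu=\lambda\nu$, normalized by $\int h\,d\nu=1$. The RPF measure $h\,d\nu$ coincides with $\mu$ under the natural projection. Now set
\[
\phi := \psi + \log h - \log(h\circ\bar{\sigma}) - \log\lambda,
\]
and $L:=L_\phi$. A direct computation gives $L\mathbf{1}=\mathbf{1}$ and $L^*\mu=\mu$, which immediately yields property (2) by iterating the identity $\int (LF)\,G\,d\mu=\int F(G\circ\bar{\sigma})\,d\mu$.

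Third, property (1) follows from the contraction/mixing of $L$: for any cylinder $[A]$, the function $L^n\chi_{[A]}$ is, up to bounded distortion, constant on $n$-cylinders, and the RPF convergence together with local H\"older regularity of $\phi$ gives $L^n\chi_{[A]}(x^+)\to\int\chi_{[A]}\,d\mu=\mu[A]$ pointwise. For property (3), I would use the Gibbs property of $\mu$ relative to $\psi$: for any cylinder $[C]$ and any point $y\in[C]$ one has $C_0^{-1}\le \mu[C]/\exp(S_{|C|}\psi(y)-|C|P_G(\psi))\le C_0$. Telescoping this bound for $[AB]$ against $[A]$ and $[B]$ produces a multiplicative error whose size is controlled by $\mathrm{Var}_1(\psi)$ at the concatenation symbol; restricting that symbol to a finite $S^*\subset\bar{S}$ bounds this error by a constant $C^*=C^*(S^*)$, yielding both (3a) and (3b).

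The main obstacle will be the unbounded nature of the alphabet: neither $h$ nor $h^{-1}$ is globally bounded, so $\phi$ is genuinely only \emph{locally} H\"older and not uniformly so. The dependence of $C^*$ on the finite set $S^*$ in property (3) is precisely what absorbs the failure of global distortion control. A careful application of Sarig's big-images/big-preimages arguments (as used in \cite{SarBernoulli}) is needed to keep the normalization and the Gibbs estimates compatible on arbitrarily long cylinders while allowing the constants to blow up only when the boundary symbols are allowed to range over all of $\bar{S}$.
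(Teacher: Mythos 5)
The paper does not actually prove this lemma --- it is quoted verbatim from \cite{SarBernoulli} --- and your sketch correctly reconstructs the argument given there: pass to the one-sided shift by the Sinai cohomology trick, normalize the potential with the Ruelle--Perron--Frobenius eigendata so that $L\mathbf{1}=\mathbf{1}$ and $L^{*}\mu=\mu$ (which yields (2)), deduce (1) from the RPF convergence theorem for positive recurrent potentials, and deduce (3) from the Gibbs-type estimates. One small imprecision: the dependence of $C^{*}$ on the finite set $S^{*}$ does not come from $\mathrm{Var}_1(\psi)$ at the concatenation symbol (local H\"older continuity already makes $\mathrm{Var}_1$ finite); it comes, as you correctly identify in your closing paragraph, from the fact that the eigenfunction $h$ is bounded above and away from zero only on finitely many partition sets at a time.
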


Observe that Statement (3) of Lemma \ref{lem:Sarig} implies Condition \textbf{(S1)}. We now show that $\mu$ satisfies \textbf{(S2)}.
For that we choose a natural number $m_0$ such that $\sup_{n\geq 1}(Var_{n+m_0}\phi_n) \leq \delta/3$.
Fix $m\geq m_0$ and take any finite collection $\gamma$ of $m-$cylinders $[C]=[\bar{c}_1\ldots\bar{c}_{m}]$ with $\bar{c}_i\in \bar{S}$.
Let $K\in\mathbb{N}$ be such that for any pair of cylinders $[C], [C']\in \gamma$ and any $k>K$ one has
$\hat{\mu}([C]\circ \hat{\sigma}^{-k}([C'])) = e^{\pm\delta/3}\hat{\mu}([C])\hat{\mu}([C'])$.

We start with some technical results.

\begin{lemma}\label{lem:ber1}
Let $n\geq 1$ and $l\geq m$. Choose any $n-$cylinder $A=[\bar{a}_1\ldots \bar{a}_{n}]$, with $\bar{a}_i\in \bar{S}$ and any $l-$cylinder
$C=[\bar{c}_1\ldots \bar{c}_{l}]$, with $\bar{c}_i\in \bar{S}$.
For any one-sided $x\in [AC]$ one has
\begin{equation}\label{Sn}
e^{\phi_n(x)}= e^{\pm\delta/3}\frac{\mu([AC])}{\mu([C])}.
\end{equation}
\end{lemma}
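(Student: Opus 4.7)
The plan is to combine the duality formula from Lemma \ref{lem:Sarig}(2) with the local H\"older estimate that was explicitly arranged when choosing $m_0$. First, applying (\ref{eq:sar1}) with $F=\chi_{[A]}$ and $G=\chi_{[C]}$, and using the fact that $[AC]=[A]\cap \bar{\sigma}^{-n}[C]$ since $A$ is an $n$-cylinder, one obtains
\begin{equation*}
\mu([AC])=\int \chi_{[A]}(\chi_{[C]}\circ \bar{\sigma}^n)\, d\mu =\int_{[C]} (L^n \chi_{[A]})\, d\mu.
\end{equation*}
The definition of the Ruelle operator $L=L_{\phi}$ then gives $L^n\chi_{[A]}(y^+)=e^{\phi_n(Ay^+)}$, where $Ay^+$ denotes the unique $\bar{\sigma}^n$-preimage of $y^+$ that begins with the word $A$ (which is admissible precisely when $y^+\in[\bar{a}_n\cdot]$; otherwise the integrand vanishes, but this is fine since the integration is over $[C]\subset [AC]$-admissible points).

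Second, I would exploit the choice of $m_0$. For any $x\in [AC]$ and any $y^+\in [C]$, the points $x$ and $Ay^+$ agree on the first $n+l$ coordinates, and since $l\geq m\geq m_0$, they agree in particular on the first $n+m_0$ coordinates. By definition of $Var_{n+m_0}(\phi_n)$ and the hypothesis $\sup_{n\geq 1}Var_{n+m_0}(\phi_n)\leq \delta/3$, this yields
\begin{equation*}
|\phi_n(Ay^+)-\phi_n(x)|\leq \delta/3, \qquad \text{i.e.,} \qquad e^{\phi_n(Ay^+)}=e^{\pm\delta/3}\, e^{\phi_n(x)}.
\end{equation*}
Substituting into the previous integral expression gives
\begin{equation*}
\mu([AC])=\int_{[C]} e^{\phi_n(Ay^+)}\, d\mu(y^+)=e^{\pm\delta/3}\, e^{\phi_n(x)}\, \mu([C]),
\end{equation*}
and rearranging yields the desired identity.

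The routine obstacle here is merely bookkeeping: one must be careful that the duality formula is applied with the correct cylinders (the $n$-cylinder $A$ and the $l$-cylinder $C$, rather than arbitrary sets) so that the concatenated cylinder $[AC]$ really coincides with $[A]\cap \bar{\sigma}^{-n}[C]$, and that the local H\"older bound is invoked at level $n+m_0$, not $n$. Once these are lined up, the result follows immediately; no deeper input beyond Lemma \ref{lem:Sarig} and the choice of $m_0$ is needed.
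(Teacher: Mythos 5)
Your proof is correct. The only difference from the paper's argument is which part of Lemma \ref{lem:Sarig} you invoke: the paper uses Statement (1), writing $\mu([AC])=\lim_{N\to\infty}L^N\chi_{[AC]}(p)$ for an arbitrary base point $p$ and then peeling off the first $n$ symbols of each $N$-preimage (so that $\phi_N(y)=\phi_n(y)+\phi_{N-n}(\bar\sigma^n y)$ and the leading factor $e^{\phi_n(y)}$ is replaced by $e^{\pm\delta/3}e^{\phi_n(x)}$ before passing to the limit $L^{N-n}\chi_{[C]}(p)\to\mu([C])$), whereas you use the duality formula of Statement (2) to write $\mu([AC])=\int_{[C]}L^n\chi_{[A]}\,d\mu$ directly and then evaluate $L^n\chi_{[A]}(y^+)=e^{\phi_n(Ay^+)}$. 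The distortion step is identical in both: since $l\geq m\geq m_0$, the relevant preimage agrees with $x$ on at least the first $n+m_0$ coordinates, so $\sup_{n\geq1}Var_{n+m_0}(\phi_n)\leq\delta/3$ applies. Your route is marginally cleaner since it avoids the limit in $N$ and the interchange of the estimate with the limit; the paper's route avoids having to identify the unique $\bar\sigma^n$-preimage of $y^+$ beginning with $A$ (your parenthetical worry about admissibility is in fact vacuous here, since the induced system on $[s]$ is a full shift over $\bar S$, so every concatenation $Ay^+$ with $y^+\in[C]$ is admissible). Both arguments are sound and rest on the same two ingredients.
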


\begin{proof}
Fix $x\in[AC]$ and $p\in \bar{S}^{\mathbb{N}}$. We have that
\begin{align*}
\mu([AC])&=\lim_{N\to \infty} L^N \chi_{[AC]}(p) = \lim_{N\to \infty} \sum_{\bar{\sigma}^N(y)=p} e^{\phi_N(y)}\chi_{[AC]}(y)\\
  &= \lim_{N\to \infty}  e^{\pm\delta/3}e^{\phi_n(x)} \sum_{\bar{\sigma}^{N-n}(y)=p} e^{\phi_{N-n}(y)}\chi_{[C]}(y)\\
&= \lim_{N\to \infty}  e^{\pm\delta/3}e^{\phi_n(x)} L^{N-n}\chi_{[C]} (p)\\
 &= e^{\pm\delta/3}e^{\phi_n(x)} \mu([C]). 
 \end{align*}
\end{proof}

\begin{lemma}\label{lem:ber2}
If $C,C'\in\gamma$ and $p\in [C']$ is a one-sided sequence, then for any $k>K$ we have the following 

\begin{equation}\label{estimate:Ln3}
 \sum_{M=1}^{k} L^{M+m}\left( \chi_{[C]}  \chi_{\{\tau\circ\bar{\sigma}^{m}+\ldots+\tau\circ\bar{\sigma}^{M+m-1}=k\}}\right) (p) = e^{\pm 2\delta/3} \mu([C]) \frac{1}{Q_{\mu}}. 
\end{equation}
\end{lemma}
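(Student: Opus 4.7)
The plan is to expand the iterate of the Ruelle operator as a sum over preimages, apply Lemma \ref{lem:ber1} to identify each $e^{\phi_{M+m}(y)}$ with a ratio of $\mu$-measures, and then use the lifting relation $\hat\mu = \tfrac{1}{Q_\mu}\mu$ on $[s]$ together with the mixing estimate for $\hat\mu$ built into the choice of $K$.

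First I would write
\[
L^{M+m}\bigl(\chi_{[C]}\chi_{\{\tau\circ\bar\sigma^m+\ldots+\tau\circ\bar\sigma^{M+m-1}=k\}}\bigr)(p)=\sum_{\bar\sigma^{M+m}(y)=p} e^{\phi_{M+m}(y)}\chi_{[C]}(y)\chi_{\{\cdot\}}(y).
\]
Since $p\in[C']$, each preimage $y$ is determined by a string $D=(\bar d_1\ldots\bar d_M)\in\bar S^M$ with $[CDC']\neq\emptyset$ and $\tau(\bar d_1)+\ldots+\tau(\bar d_M)=k$; call this preimage $y(D)$. Applying Lemma \ref{lem:ber1} with the cylinder $CD$ (of length $M+m$) extended by the $m$-cylinder $C'$, I obtain
\[
e^{\phi_{M+m}(y(D))}=e^{\pm\delta/3}\,\frac{\mu([CDC'])}{\mu([C'])}.
\]
Summing first over $D$ and then over $M=1,\ldots,k$, and recognizing that the resulting double sum of cylinders coincides with the disjoint decomposition of $S([C],[C'],\cdot,k)$, I arrive at
\[
\sum_{M=1}^k L^{M+m}\bigl(\chi_{[C]}\chi_{\{\cdot\}}\bigr)(p)=\frac{e^{\pm\delta/3}}{\mu([C'])}\sum_{M=1}^{k}\mu\bigl(S([C],[C'],M,k)\bigr).
\]

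The next step is the bridge between the induced and the lifted pictures. Writing $n_1=\tau(\bar c_1)+\ldots+\tau(\bar c_m)$, I interpret $S([C],[C'],M,k)$ as exactly the set of points of $[C]$ whose $(M+m)$-th iterate under $\bar\sigma$ lands in $[C']$ after total return time $k$; projecting this on the tower $S_A^{\mathbb{Z}}$ this corresponds to $[C]\cap\hat\sigma^{-(k+n_1)}([C'])$, and the lifting relation $\hat\mu|_{[s]}=Q_\mu^{-1}\mu$ (together with identification of cylinders in $[s]$ with cylinders in $S_A^{\mathbb{Z}}$ starting at position $0$) gives
\[
\sum_{M=1}^{k}\mu\bigl(S([C],[C'],M,k)\bigr)=Q_\mu\,\hat\mu\bigl([C]\cap\hat\sigma^{-(k+n_1)}([C'])\bigr).
\]
Since $k>K$ and $n_1\ge m>0$, the shift depth $k+n_1$ exceeds $K$, so the mixing bound chosen before the Lemma yields $\hat\mu([C]\cap\hat\sigma^{-(k+n_1)}([C']))=e^{\pm\delta/3}\hat\mu([C])\hat\mu([C'])$. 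Substituting $\hat\mu([C])=\mu([C])/Q_\mu$ and $\hat\mu([C'])=\mu([C'])/Q_\mu$, and combining with the $e^{\pm\delta/3}$ factor from Lemma \ref{lem:ber1}, the factors of $\mu([C'])$ cancel and I obtain the claimed identity with error at most $e^{\pm 2\delta/3}$.

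The step I anticipate as the main obstacle is the bookkeeping in going from the $L^{M+m}$-sum to the $\mu$-measure of $S([C],[C'],M,k)$, namely verifying that the parametrization by compatible strings $D$ is exhaustive and disjoint and that the factor $e^{\pm\delta/3}$ from Lemma \ref{lem:ber1} can be pulled uniformly out of the sum in $M$ and $D$. Everything else is either a direct application of Lemma \ref{lem:ber1}, the lift formula $\hat\mu|_{[s]}=Q_\mu^{-1}\mu$, or the mixing hypothesis encoded in the choice of $K$.
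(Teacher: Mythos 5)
Your proposal is correct and follows essentially the same route as the paper: both arguments rest on decomposing the sum over preimages of $p$ into the cylinders $[CDC']$ with $\tau(\bar d_1)+\ldots+\tau(\bar d_M)=k$, identifying $e^{\phi_{M+m}(y_D)}$ with $e^{\pm\delta/3}\mu([CDC'])/\mu([C'])$ (you invoke Lemma \ref{lem:ber1} directly where the paper re-derives it via the limit $L^N\chi_{[CDC']}$), and then applying the mixing bound built into the choice of $K$ together with $\hat\mu|_{[s]}=Q_\mu^{-1}\mu$. The only difference is the direction of the chain of equalities (operator side to measure side rather than the reverse), which is immaterial.
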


\begin{proof}
First recall that by the choice of $k,$ 
$$\mu([C]\circ \hat{\sigma}^{-k}([C']))=Q_{\mu}\hat{\mu}([C]\circ \hat{\sigma}^{-k}([C'])) = e^{\pm\delta/3}\hat{\mu}([C])\hat{\mu}([C'])
=e^{\pm \delta/3}\frac{1}{Q_{\mu}}\mu([C])\mu([C']).$$

On the other hand,
\begin{align*}
\mu([C]\circ \hat{\sigma}^{-k}([C']))&= \sum_{M=1}^{k}\sum_{\substack{D=(\bar{d}_1\ldots\bar{d}_{M})\in \bar{S}^M,\\  \tau(\bar{d}_1)+\ldots+\tau(\bar{d}_{M})=k}}   \mu([CDC'])\\
&=\sum_{M=1}^{k}\sum_{\substack{D=(\bar{d}_1\ldots\bar{d}_{M})\in \bar{S}^M,\\  \tau(\bar{d}_1)+\ldots+\tau(\bar{d}_{M})=k}}   \lim_{N\to\infty}L^N\chi_{[CDC']}(x)\text{ for some }x\in\bar{S}^{\mathbb{N}}.
\end{align*}
    
For any $M-$cylinder $D$ in the sum above we can define a unique point $y_D\in [CDC']$ such that $\bar{\sigma}^{m+M}(y_D)=p$.
We then continue,
\begin{align*}
&=\sum_{M=1}^{k}\sum_{\substack{D=(\bar{d}_1\ldots\bar{d}_{M})\in \bar{S}^M,\\  \tau(\bar{d}_1)+\ldots+\tau(\bar{d}_{M})=k}}   e^{\pm\delta/3}e^{\phi_{m+M}(y_D)}\lim_{N\to\infty}L^{N-(m+M)}\chi_{[C']}(x)\\
&=    e^{\pm\delta/3}  \sum_{M=1}^{k}   \sum_{\bar{\sigma}^{m+M}(y)=p}  e^{\phi_{m+M}(y)} \chi_{[C]}(y)  \chi_{\{\tau\circ\bar{\sigma}^{m}+\ldots+\tau\circ\bar{\sigma}^{M+m-1}=k\}}(y)    \mu([C'])\\
&=    e^{\pm\delta/3}  \sum_{M=1}^{k} L^{M+m}\left( \chi_{[C]}  \chi_{\{\tau\circ\bar{\sigma}^{m}+\ldots+\tau\circ\bar{\sigma}^{M+m-1}=k\}}\right) (p)  \mu([C']).
\end{align*}
\end{proof}

We now use Lemma \ref{lem:ber1} and Lemma \ref{lem:ber2} to prove the following.

\begin{lemma}\label{lem:ber3}
If $C,C'\in\gamma$ and $p\in [C']$ is a one-sided sequence, then for any $k>K$ and   
any $A=[\bar{a}_1\ldots \bar{a}_{n}]$, with $n\geq 1,~\bar{a}_i\in \bar{S}$
 we have the following 

$$\sum_{M=1}^{k}     L^{n+m+M}\left(\chi_{[AC]}  \chi_{\{\tau\circ\bar{\sigma}^{n+m}+\ldots+\tau\circ\bar{\sigma}^{n+m+M-1}=k\}} \right)(p)
=  e^{\pm \delta} \mu([AC]) \frac{1}{Q_{\mu}}.$$

\end{lemma}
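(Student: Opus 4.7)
The plan is to reduce the claim to a direct combination of Lemma \ref{lem:ber1} (which controls the Birkhoff sum $\phi_n$ on the cylinder $[AC]$) and Lemma \ref{lem:ber2} (which evaluates the analogous sum for the cylinder $[C]$ without the prefix $A$). First, I would expand each summand via the definition of the Ruelle operator,
$$L^{n+m+M}\bigl(\chi_{[AC]}\,\chi_{\{\tau\circ\bar{\sigma}^{n+m}+\ldots+\tau\circ\bar{\sigma}^{n+m+M-1}=k\}}\bigr)(p)=\sum_{\bar{\sigma}^{n+m+M}(y)=p}e^{\phi_{n+m+M}(y)}\chi_{[AC]}(y)\chi_{\{\ldots\}}(y),$$
and use the cocycle identity $\phi_{n+m+M}(y)=\phi_n(y)+\phi_{m+M}(\bar{\sigma}^n(y))$. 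For every $y\in[AC]$, Lemma \ref{lem:ber1} applied to the cylinders $A$ (of length $n$) and $C$ (of length $m\geq m_0$) yields $e^{\phi_n(y)}=e^{\pm\delta/3}\mu([AC])/\mu([C])$, a factor that depends neither on the specific $y$ nor on $M$ and hence pulls out of the sum.

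Next, I would perform the change of variable $z=\bar{\sigma}^n(y)$. Because the induced shift $(\bar{S}^{\mathbb{N}},\bar{\sigma})$ is a full shift on the countable alphabet $\bar{S}$, this is a bijection between $\{y\in[AC]:\bar{\sigma}^{n+m+M}(y)=p\}$ and $\{z\in[C]:\bar{\sigma}^{m+M}(z)=p\}$, and the tail condition transforms as $\tau\circ\bar{\sigma}^{n+m}+\ldots+\tau\circ\bar{\sigma}^{n+m+M-1}$ evaluated at $y$ equals $\tau\circ\bar{\sigma}^m+\ldots+\tau\circ\bar{\sigma}^{m+M-1}$ evaluated at $z$. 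This produces the identity
$$L^{n+m+M}\bigl(\chi_{[AC]}\chi_{\{\ldots\}}\bigr)(p)=e^{\pm\delta/3}\frac{\mu([AC])}{\mu([C])}\,L^{m+M}\bigl(\chi_{[C]}\chi_{\{\tau\circ\bar{\sigma}^m+\ldots+\tau\circ\bar{\sigma}^{m+M-1}=k\}}\bigr)(p).$$

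Finally, summing over $M=1,\ldots,k$ and invoking Lemma \ref{lem:ber2} on the right-hand side gives
$$\sum_{M=1}^{k}L^{n+m+M}\bigl(\chi_{[AC]}\chi_{\{\ldots\}}\bigr)(p)=e^{\pm\delta/3}\frac{\mu([AC])}{\mu([C])}\cdot e^{\pm 2\delta/3}\mu([C])\frac{1}{Q_\mu}=e^{\pm\delta}\mu([AC])\frac{1}{Q_\mu},$$
which is the desired estimate. The point that merits the most care is the bijection step: one must verify that every $z\in[C]$ with $\bar{\sigma}^{m+M}(z)=p$ really lifts to a valid $y=Az\in[AC]$ with no Markov obstruction between the last letter of $A$ and $\bar{c}_1$. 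This is precisely where the fact that $\bar{\sigma}$ is a full shift (a standard consequence of inducing on a single state of the topologically mixing shift) is essential; without it, the constant in front would have to absorb transition data and the clean factorisation used above would break down.
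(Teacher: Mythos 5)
Your proposal is correct and follows essentially the same route as the paper: expand $L^{n+m+M}$ as a sum over preimages, use the cocycle identity to split off $e^{\phi_n(y)}$, apply Lemma \ref{lem:ber1} to replace that factor by $e^{\pm\delta/3}\mu([AC])/\mu([C])$, reindex the remaining sum as $L^{m+M}(\chi_{[C]}\chi_{\{\cdots\}})(p)$, and finish with Lemma \ref{lem:ber2}. Your explicit attention to the bijection of preimages under $y\mapsto\bar{\sigma}^n(y)$ (valid because the induced system on $[s]$ is a full shift) is a point the paper leaves implicit, but the argument is the same.
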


\begin{proof}

Fix a one-sided sequence $p\in [C']$. We have that
\begin{align*}
&\sum_{M=1}^{k}     L^{n+m+M}\left(\chi_{[AC]}  \chi_{\{\tau\circ\bar{\sigma}^{n+m}+\ldots+\tau\circ\bar{\sigma}^{n+m+M-1}=k\}} \right)(p)\\
=& \sum_{M=1}^{k} \sum_{\bar{\sigma}^{n+m+M}(y)=p} e^{\phi_{n+m+M}(y)}\chi_{[AC]}(y)  \chi_{\{\tau\circ\bar{\sigma}^{n+m}+\ldots+\tau\circ\bar{\sigma}^{n+m+M-1}=k\}}(y). 
\end{align*}
By Lemma \ref{lem:ber1}, this is equal to
\begin{align*}
& e^{\pm\delta/3} \frac{\mu([AC])}{\mu([C])} \sum_{M=1}^{k} \sum_{\bar{\sigma}^{m+M}(y)=p} e^{\phi_{m+M}(y)}\chi_{[C]}(y)  \chi_{\{\tau\circ\bar{\sigma}^{m}+\ldots+\tau\circ\bar{\sigma}^{m+M-1}=k\}}(y) \\
=&  e^{\pm\delta/3} \frac{\mu([AC])}{\mu([C])} \sum_{M=1}^{k} L^{m+M}\left( \chi_{[C]}  \chi_{\{\tau\circ\bar{\sigma}^{m}+\ldots+\tau\circ\bar{\sigma}^{m+M-1}=k \}}\right) (p).
\end{align*}
By Lemma \ref{lem:ber2}, we conclude that
$$\sum_{M=1}^{k}     L^{n+m+M}\left(\chi_{[AC]}  \chi_{\{\tau\circ\bar{\sigma}^{n+m}+\ldots+\tau\circ\bar{\sigma}^{n+m+M-1}=k\}} \right)(p)
=  e^{\pm \delta} \mu([AC]) \frac{1}{Q_{\mu}}.$$
\end{proof}

We continue with the proof of Condition \textbf{(S2)}.
Fix $C,C'\in\gamma$, $k>K$, and let $A,B$  be as in the hypothesis of Condition \textbf{(S2)}. We then calculate,

\begin{align*}
  &\sum_{M=1}^{k} \mu   (   \{ x\in    [AC] \circ \bar{\sigma}^{-(n_1+m+M)}[C'B]  | \tau(\bar{\sigma}^{m+n_1}x) + \ldots + \tau(\bar{\sigma}^{m+n_1+M-1}x)=k   \}   )         \\
&=  \sum_{M=1}^{k} \int \chi_{[AC]}(\chi_{[C'B]}\circ\bar{\sigma}^{n_1+m+M}) \chi_{\{\tau(\bar{\sigma}^{n_1+m}(x))+\ldots+\tau(\bar{\sigma}^{n_1+m+M-1}(x))=k\}}d\mu \text{ by (\ref{eq:sar1}),}\\
&=\sum_{M=1}^{k} \int_{[C'B]} L^{n_1+m+M}\left(\chi_{[AC]}  \chi_{\{\tau\circ\bar{\sigma}^{n_1+m}+\ldots+\tau\circ\bar{\sigma}^{n_1+m+M-1}=k\}} \right)d\mu.
\end{align*}

By Lemma \ref{lem:ber3}, we conclude that

\begin{align*}
 &\sum_{M=1}^{k} \mu   (   \{ x\in    [AC] \circ \bar{\sigma}^{-(n_1+m+M)}[C'B]  | \tau(\bar{\sigma}^{m+n_1}x) + \ldots + \tau(\bar{\sigma}^{m+n_1+M-1}x)=k   \}   ) \\
  &=e^{\pm \delta} \hat{\mu}([AC])\hat{\mu}([C'B])  \frac{1}{Q_{\mu}}.
  \end{align*}

The remaining statements of Theorem \ref{thm: tower} can be deduced from the following, more general result.

\begin{proposition}\label{noequilibrium}
	Let $\tilde{f}:\tilde{X}\to \tilde{X}$ be an induced map satisfying \textbf{(I1)-(I2), (I5)} and $\tilde{\mu}$ be some $\tilde{f}$-invariant ergodic probability measure. 
	Assume in addition that there exist $C>0$ and $0<\rho<1$ such that
	\begin{equation}\label{holderdistorsion}
	|\frac{g_{\tilde{\mu}}(x)}{g_{\tilde{\mu}}(y)}-1|\le C\rho^{s(\tilde{f}(x),\tilde{f}(y))}
	\end{equation}
where 
$g_{\tilde{\mu}}$ is the Jacobian of $\tilde{\mu}$ defined as in Section \ref{sec:prel}. Let $\mu$ and $\hat{\mu}$ denote the lifted measures of $\tilde{\mu}$ to $X$ and $\hat{X}$ respectively. Let $\hat{h}_1,\ \hat{h}_2\in C^{\alpha}(\hat{X})$ and $h_1, h_2\in C^{\alpha}(X).$  Then
	\begin{enumerate}
		\item Assume that $\tilde{\mu}$ has exponential (respectively, polynomial) tail. Then $\hat{f}$ has exponential (respectively, polynomial) decay of correlations with respect to $\hat{\mu};$ 
		\item Assume $\tilde{\mu}(\tau>n)=\mathcal{O}(\frac{1}{n^{\beta}}),\ \beta>1.$ If $\hat{h}_1,\ \hat{h}_2 $ supported on $\bigcup_{j=0}^{k}\tilde{X}\times\{j\}$ ($\tilde{X}=\tilde{X}\times\{0\}$) for some $k, $ then $\hat{f}$ satisfies (\ref{Gouzel}) as well as the CLT with respect to $\hat{\mu};$ 
		\item Assume that $\tilde{\mu}$ has exponential (respectively, polynomial) tail, and $f$ satisfies \textbf{(F1)-(F2)}. Then $f$ has exponential (respectively, polynomial) decay of correlations with respect to $\mu;$
		
		\item Assume $\tilde{\mu}(\tau>n)=\mathcal{O}(\frac{1}{n^{\beta}}),\ \beta>1$ and  $f$ satisfies \textbf{(F1)-(F2)}. There exists a sequence of nested sets $Y_0\subset Y_1\subset\cdots$ in $X$ such that if $h_1,\ h_2$ are supported inside $Y_k$ for some $k\ge 0,$    then $f$ satisfies (\ref{Gouzel1}) as well as the CLT with respect to $\mu.$

	\end{enumerate}
\end{proposition}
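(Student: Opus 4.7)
The plan is to reduce all four statements to established tower results: Young \cite{you2} and Melbourne \cite{mel} in the exponential case, Gouëzel's refinement for the polynomial case (as used in \cite{mel}), and the Melbourne--Nicol central limit theorem. The key observation is that $(\hat{X},\hat{f},\hat{\mu})$ is already in the form of a Young tower over the base $(\tilde{X},\tilde{f})$ with return-time function $\tau$ and reference measure $\tilde{\mu}$; the Hölder distortion hypothesis (\ref{holderdistorsion}) on the Jacobian $g_{\tilde{\mu}}$ is exactly the bounded-distortion condition needed in those results, and Condition \textbf{(I5)} provides the aperiodicity ensuring mixing (rather than a cyclic decomposition). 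After checking that our symbolic metric $d_{\theta}$ from (\ref{dtheta}) is equivalent (up to a change of exponent) to the tower metric used in \cite{you2,mel}, Statements (1) and (2) will follow by direct quotation.

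More precisely, for exponential tail Young's theorem yields exponential decay of correlations for Hölder observables on $\hat{X}$; for polynomial tail of order $\beta>1$ one gets the $\mathcal{O}(n^{-(\beta-1)})$ rate on $C^{\alpha}(\hat{X})$, with the sharper asymptotic (\ref{Gouzel}) for observables supported on finitely many levels $\bigcup_{j\le k}\tilde{X}\times\{j\}$, which is precisely the regime where Gouëzel's renewal method applies. The CLT for $\hat{f}$ under the support assumption then follows from the summability of correlations together with the standard Gordin--Nagaev--Guivarc'h argument as implemented in \cite{mel}.

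Statements (3) and (4) are obtained by projecting via the semi-conjugacy $\pi_0:\hat{X}\to X$, $f\circ\pi_0=\pi_0\circ\hat{f}$, $\mu=(\pi_0)_*\hat{\mu}$. For $h_1,h_2\in C^{\alpha}(X)$ the identity
$$\text{Cor}_n^{\mu}(h_1\circ f^n,h_2)=\text{Cor}_n^{\hat{\mu}}((h_1\circ\pi_0)\circ\hat{f}^n,\,h_2\circ\pi_0)$$
reduces matters to correlations on the tower for the lifted observables $\hat{h}_i:=h_i\circ\pi_0$. The content of \textbf{(F1)}--\textbf{(F2)} is precisely that these lifts are Hölder with respect to $d_{\theta}$: \textbf{(F1)} converts the combinatorial separation time into genuine metric contraction/expansion under $\tilde{f}$, while \textbf{(F2)} propagates this control across the intermediate iterates $f^j$, $0\le j<\tau$. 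Setting $Y_k:=\pi_0\bigl(\bigcup_{j=0}^{k}\tilde{X}\times\{j\}\bigr)$, an observable $h_i$ supported in $Y_k$ lifts to one supported on the first $k+1$ levels of the tower, where the Hölder estimate on $\hat{h}_i$ is uniform; Statement (4) then follows from Statement (2).

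The main obstacle is the exponential case of (3), where observables are not assumed to be supported on finitely many levels and $\hat{h}_i$ need not admit a uniform Hölder constant across $\hat{X}$. Following the standard truncation device in \cite{you2}, the plan is to decompose $\hat{h}_i=\hat{h}_i^{(N)}+(\hat{h}_i-\hat{h}_i^{(N)})$, with $\hat{h}_i^{(N)}$ supported on the first $N$ levels. The exponential-decay estimate applies to the correlation of the truncated parts, whose Hölder norms grow at most geometrically in $N$ (the rate controlled by the contraction/expansion factor $\gamma$ in \textbf{(F1)} together with the comparison constant in \textbf{(F2)}), while the tail contribution is bounded by $\|h_i\|_{\infty}\,\hat{\mu}(\{\text{level}>N\})$, which decays exponentially by the tail hypothesis on $\tilde{\mu}$. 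Choosing $N=N(n)$ linearly in $n$ to balance the two error terms yields exponential decay for $\text{Cor}_n^{\mu}(h_1\circ f^n,h_2)$, possibly at a smaller rate, completing the proposition.
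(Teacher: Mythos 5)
Your proposal is correct and follows essentially the same route as the paper: the paper's entire proof of this proposition is the single remark that it ``follows from results in \cite{you2} and \cite{mel}'' (plus the observation that H\"older observables are excellent in the sense of \cite{mel}), i.e.\ a direct quotation of the Young and Melbourne--Terhesiu tower theorems. Your additional detail --- the projection identity via $\pi_0$, the role of \textbf{(F1)}--\textbf{(F2)} in making the lifted observables $d_{\theta}$-H\"older, and the truncation over tower levels in the exponential case --- is exactly the standard machinery from \cite{you2} that the paper leaves implicit.
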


The above proposition follows from results in \cite{you2} and \cite{mel}.
We note that our choice of observables ensures that the results in \cite{mel} apply as piecewise H\"older continuous functions are excellent in the sense defined in \cite{mel} (one can see this on the page 896 in \cite{mel}).

To show correlation estimates (3a) and (3b) we apply Statement (1) of Proposition \ref{noequilibrium}.
We need to verify that $\nu_{\varphi^{+}}$  satisfies (\ref{holderdistorsion}). This can be done in the following way. 
Since $\nu_{\varphi^{+}}$ is the unique equilibrium measure then by Theorem 5.5 in \cite{Sar15},  it is equal to the Ruelle-Perron-Frobenius measure for $\varphi^{+}.$ If $\nu_c$ is an eigenmeasure for the Ruelle operator $L_{\varphi^{+}},$ then $d\nu_{\varphi^{+}}=hd\nu_c$ where $h$ is an eigenfunction for $L_{\varphi^{+}}.$ Note that by Theorem 3.3 in \cite{Sar15}, the measure $\nu_c$ is conformal for $\varphi^{+}.$ In addition, since $\varphi$ has zero Gurevich pressure and is positive recurrent (the latter follows from Theorem 5.5 \cite{Sar15}), then by Theorem 3.7 in \cite{Sar15}, it follows that $\varphi^{+}=\log( g_{\nu_c}).$ One can see that
$$
\log(g_{\nu_{\varphi^{+}}})=\log(g_{\nu_c})+\log(h)-\log(h\circ\hat{f}).
$$
By Proposition 3.4 in \cite{SarNotes}, one has 
$Var_nh<\sum\limits_{l\ge n+1}Var_l(\varphi^{+}).$ Hence, by local H\"older continuity of 
$\varphi^{+}$, all the terms in the right hand side of the above equality are locally H\"older continuous with the same exponent. Hence, we find that $\log(g_{\nu_{\varphi^{+}}})$ is locally H\"older continuous function and therefore the measure $\nu_{\varphi^{+}}$ satisfies (\ref{holderdistorsion}) .\\

Note that $\int_{\tilde{X}}\tau(x)dx<\infty$ as $\nu_{\varphi^{+}}$ has decaying tail. In addition, the tower satisfies $\textbf{(I5)}.$ Therefore all the requirements of Proposition \ref{noequilibrium} are met. Hence we obtain correlation estimates (3a)-(3d).
\end{proof}

\subsection{Proof of Theorem \ref{thm: manifold}}\label{proof:thm:manifold}
Statement (1) is proved in \cite{ind}. Statement (2) follows from Statement (1) of Theorem \ref{thm: tower} and Statement (1) of Theorem \ref{thm: manifold}. Statement (3) follows from Theorem \ref{thm:generalcase} and the proof of Theorem \ref{proof:thm:tower}.

We showed in the proof of Theorem \ref{thm: tower} the measure $\nu_{\varphi^{+}}$ satisfies all the assumptions of Proposition \ref{noequilibrium}. Therefore, Statements (4a) and (4b) follow from Proposition \ref{noequilibrium}.

To prove (4c) and (4d)   assume that $\nu_{\varphi^{+}}(\tau>n)=\mathcal{O}(\frac 1{n^{\beta}}),\ \beta>1.$ Define $Y_k=\bigcup_{j=0}^kf^j(\tilde{X}).$ Assume that $h_1, h_2$ supported on $Y_k$ for some $k\ge 0.$ Then the observables $\hat{h}_1, \hat{h}_2$ are supported in  $\bigcup_{j=0}^k\tilde{X}\times \{j\}$ for some $k.$  Again, Statements (4c) and (4d)  follow from Proposition \ref{noequilibrium}.

\subsection{Proof of Theorem \ref{thm:pressure}}\label{proof:thm:pressure}

\subsubsection{Phase transitions for 2-sided countable Markov shifts}

We start by showing an analogous result in symbolic settings. Our argument closely follows ideas presented by Sarig in \cite{Sar00}, and extends the result of Sarig to the case of hyperbolic towers. All necessary definitions and notation are given in Section \ref{sec:prel}. 
Let $(S_A^{\mathbb{Z}},\sigma_A)$ be a topologically mixing countable Markov shift
and let $\p_1,\p_2:S_A^{\mathbb{Z}} \to \mathbb{R}$ be some functions. 

\begin{theorem}\label{presshift}
Assume there exists a state $a\in S$ such that $\p_1,\p_2$ and the corresponding induced potentials $\overline{\p}_1,\overline{\p}_2$ on $[a]$ satisfy the following conditions:
\begin{enumerate}
\item $P(\p_1) < \infty$, $P(\p_1 + t\p_2)<\infty$, and $P_G(\overline{\p_1+t\p_2-P(\p_1+t\p_2)})=0$ for small enough $|t|$;
\item $\overline{\p}_1,\overline{\p}_2$ are locally H\"{o}lder; 
\item $\exists_{M\in\mathbb{R}}~ \sum_{\bar{a}\in\bar{S}_A} \exp\left[  \overline{\p}_1(\ldots\bar{x}_{-2}\bar{x}_{-1}\bar{a}\bar{x}_1\bar{x_2}\ldots) - P(\p_1)|\bar{a}|       \right] <M$ for all $\left(\bar{x}_i \right)_{i=-\infty}^{\infty} \in \bar{S}_A^{\mathbb{Z}}$;
\item there exists $\bar{a}\in\bar{S}_A$ such that:
\begin{enumerate}
\item $\sum_{n\geq 1} Z_n \left( \overline{\p_1-P(\p_1)}, \bar{a}  \right) = \infty$;
\item $\sum_{n\geq 1} n Z_n^* \left( \overline{\p_1-P(\p_1)}, \bar{a}  \right) < \infty$;
\item $\exists_{r>exp(-P(\p_1)), \epsilon_0>0}  \sum_{n\geq 1} n r^n Z_n^*(\p_1 + \epsilon_0 |\p_2|, a) <\infty$.
\end{enumerate}
\end{enumerate}
Then for some $0<\epsilon<\epsilon_0$ the functions $t \to P_G(\overline{\p_1 - P(\p_1)+ t\p_2})$ and $t \to P(\p_1 + t\p_2)$ are real analytic on $(-\epsilon, \epsilon)$.
\end{theorem}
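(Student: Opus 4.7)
The plan is to follow the scheme of Sarig in \cite{Sar00}, exploiting that the first-return map $\bar{\sigma}$ to $[a]$ realizes the induced shift as a full shift on the countable alphabet $\bar{S}_A$ of return words. The passage from the two-sided shift to a one-sided one is standard: locally H\"older potentials are cohomologous to ones depending only on positive coordinates, so I work on $\bar{S}_A^{\mathbb{N}}$. The strategy is to introduce a two-parameter family
\[
 \Psi_{s,t}:=\overline{\p}_1+t\overline{\p}_2-s\bar{\tau},\qquad \bar{\tau}(\bar{a})=|\bar{a}|,
\]
prove that $(s,t)\mapsto P_G(\Psi_{s,t})$ is real-analytic near $(s_0,0)$ with $s_0:=P(\p_1)$, and then solve $P_G(\Psi_{s,t})=0$ for $s=s(t)$ via the implicit function theorem, identifying $s(t)$ with $P(\p_1+t\p_2)$ through the Abramov--Kac relation secured by assumption (1).

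First I would set up the Ruelle operator $L_{\Psi_{s,t}}$ on a Banach space of locally H\"older functions on $\bar{S}_A^{\mathbb{N}}$. Local H\"older regularity of the symbol follows from (2) together with the fact that $\bar{\tau}$ is constant on every $1$-cylinder of the full shift. The essential input is the summability in (4c): rewriting $\sum_n n\,r^n Z_n^*(\p_1+\epsilon_0|\p_2|,a)<\infty$ with $r>e^{-P(\p_1)}=e^{-s_0}$ in terms of return words yields
\[
 \sum_{\bar{a}\in\bar{S}_A}|\bar{a}|\,r^{|\bar{a}|}\exp\!\bigl(\sup\overline{\p}_1(\bar{a})+\epsilon_0\sup|\overline{\p}_2(\bar{a})|\bigr)<\infty,
\]
which allows one to push $s$ into a complex strip of width $\log r+s_0>0$ about $s_0$ and $t$ into a complex disc of radius $\epsilon_0$ while keeping the defining series of $L_{\Psi_{s,t}}\mathbf{1}$ absolutely convergent. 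Assumption (3) ensures that this bound is uniform in the base point, so $(s,t)\mapsto L_{\Psi_{s,t}}$ becomes a holomorphic family of bounded operators.

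Assumptions (4a)--(4b), combined with $P_G(\Psi_{s_0,0})=0$ from (1), are precisely the criteria for $\Psi_{s_0,0}=\overline{\p_1-P(\p_1)}$ to be \emph{positive recurrent}. The generalized Ruelle--Perron--Frobenius theorem then delivers a simple leading eigenvalue $e^{P_G(\Psi_{s_0,0})}=1$ of $L_{\Psi_{s_0,0}}$ with a spectral gap on this Banach space, and Kato's analytic perturbation theory provides a real-analytic continuation of the leading eigenvalue to a neighborhood of $(s_0,0)$. Consequently $(s,t)\mapsto P_G(\Psi_{s,t})$ is real-analytic there; specializing to $s=s_0$ shows that $t\mapsto P_G(\overline{\p_1-P(\p_1)+t\p_2})$ is analytic on some $(-\epsilon,\epsilon)$ with $0<\epsilon<\epsilon_0$.

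Finally I would invoke the analytic implicit function theorem. Differentiating Gurevich pressure along $s$ at the equilibrium state yields
\[
 \frac{\partial P_G(\Psi_{s,t})}{\partial s}\bigg|_{(s_0,0)}=-\int\bar{\tau}\,d\bar{\mu}_{\Psi_{s_0,0}},
\]
which is finite by (4b) and strictly negative since $\bar{\tau}\geq 1$. Solving $P_G(\Psi_{s(t),t})=0$ analytically gives $s(\cdot)$; assumption (1) identifies this zero set as $s(t)=P(\p_1+t\p_2)$ via Abramov--Kac, so $t\mapsto P(\p_1+t\p_2)$ is analytic on $(-\epsilon,\epsilon)$. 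The principal difficulty is verifying the holomorphic dependence of the family $L_{\Psi_{s,t}}$ on a Banach space possessing a uniform spectral gap: conditions (3) and (4c) are tailored precisely to that end, the former controlling the operator norm uniformly in $t$ and the latter providing the complex wiggle room in $s$.
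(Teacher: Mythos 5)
Your proposal follows essentially the same route as the paper's proof: reduce to the one-sided induced shift by a coboundary built from the variations of the induced potentials (the paper's Lemma \ref{le2}; note that only $\overline{\p}_1,\overline{\p}_2$, not $\p_1,\p_2$, are assumed H\"older, so the transfer functions must be constructed from the induced variations exactly as you indicate), then characterize $P(\p_1+t\p_2)$ implicitly by $P_G(\overline{\p_1+t\p_2-\omega})=0$ and apply spectral-gap plus holomorphic perturbation theory for the two-parameter family of Ruelle operators $L_{\overline{\p_1+t\p_2-\omega}}$ together with the complex implicit function theorem. The paper outsources the spectral gap and the joint holomorphy of the operator family to Lemmas 4 and 6 of \cite{Sar00}, which is precisely the role your conditions (3) and (4a)--(4c) play in your sketch, so the two arguments coincide in substance.
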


Here $P_G$ denotes the Gurevich pressure and $P$ denotes the variational pressure (see Section \ref{sec:prel} for definitions).

The proof requires two lemmas that are stated below. Consider potentials 
$\p_1^*,\p_2^*: S_A^{\mathbb{N}} \to \mathbb{R}$ on a topologically mixing one-sided Markov shift, a state $a\in S$ and the induced system 
$\left( \bar{S}_A^{\mathbb{N}},\bar{\sigma}\right)$ on $[a]$.

\begin{lemma}\label{le1}
Assume the following conditions are satisfied:
\begin{enumerate}
\item $P(\p_1^*) < \infty$, $P(\p_1^* + t\p_2^*)<\infty$, and $P_G(\overline{\p_1^*+t\p_2^*-P(\p_1^*+t\p_2^*)})=0$ for small enough $|t|$;
\item $\bar{\p}_1^*,\bar{\p}_2^*$ are locally H\"{o}lder;
\item $\exists_{M\in\mathbb{R}}~ \sum_{\bar{a}\in\bar{S}_A} \exp\left[  \bar{\p}_1^*(\bar{a}\bar{x}_1\bar{x_2}\ldots) - P(\p_1^*)|\bar{a}|       \right] <M$ for all $\left(\bar{x}_i \right)_{i=0}^{\infty} \in \bar{S}_A^{\mathbb{N}}$;
\item there exists $\bar{a}\in\bar{S}_A$ such that
\begin{enumerate}
\item $\sum_{n\geq 1} Z_n \left( \overline{\p_1^*-P(\p_1^*)}, \bar{a}  \right) = \infty$;
\item  $\sum_{n\geq 1} n Z_n^* \left( \overline{\p_1^*-P(\p_1^*)}, \bar{a}  \right) < \infty$;
\item $\exists_{r>exp(-P(\p_1^*)), \epsilon_0>0}  \sum_{n\geq 1} n r^n Z_n^*(\p_1^* + \epsilon_0 |\p_2^*|, a) <\infty$.
\end{enumerate}
\end{enumerate}
Then for some $0<\epsilon<\epsilon_0$ the functions $t \to P_G(\overline{\p_1^* - P(\p_1^*)+ t\p_2^*})$ and $t \to P(\p_1^* + t\p_2^*)$ are real analytic on $(-\epsilon, \epsilon)$.
\end{lemma}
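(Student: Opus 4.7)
The plan is to follow Sarig's approach from \cite{Sar00}: introduce an auxiliary pressure parameter, show joint analyticity of the resulting two-variable Gurevich pressure via analytic perturbation theory for transfer operators, and then extract analyticity of $t \mapsto P(\p_1^* + t\p_2^*)$ by the implicit function theorem. Since $(\bar{S}_A^{\mathbb{N}}, \bar{\sigma})$ is the full shift on $\bar{S}_A$, for $(t,p)$ in a neighborhood of $(0, P(\p_1^*))$ I would consider the two-parameter family
\[
\Psi_{t,p} := \bar{\p}_1^* + t\bar{\p}_2^* - p\,\tau_a,
\]
where $\tau_a(\bar{a}) = |\bar{a}|$ is the first-return time to $[a]$. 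By hypothesis (1), $F(0, P(\p_1^*)) = 0$ where $F(t,p) := P_G(\Psi_{t,p})$, and the goal is to show $F$ is jointly real analytic near this point.

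The next step is to identify a Banach space $\mathcal{B}$ of locally H\"{o}lder continuous functions on $\bar{S}_A^{\mathbb{N}}$, equipped with a weighted sup-norm of the type used in \cite{Sar03}, such that the transfer operators $L_{\Psi_{t,p}}$ act boundedly on $\mathcal{B}$ and $(t,p)\mapsto L_{\Psi_{t,p}}$ is operator-analytic in a complex neighborhood of $(0, P(\p_1^*))$. Hypotheses (2)--(4) contribute distinct ingredients: (2) supplies local H\"{o}lder regularity of the induced perturbation $\bar{\p}_2^*$; (3) is a big-images-type uniform summability that ensures boundedness of $L_{\Psi_{t,p}}$; and (4c), with $r > \exp(-P(\p_1^*))$, is precisely the assumption that leaves room for a complex perturbation in $t$ while preserving summability of the relevant partition functions. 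Together with positive recurrence at $(0,P(\p_1^*))$ provided by (4a)--(4b), the generalized Ruelle--Perron--Frobenius theorem of \cite{Sar15} yields a simple isolated leading eigenvalue $\lambda(t,p) = \exp F(t,p)$ with a spectral gap that persists under small perturbations. Kato's analytic perturbation theory then gives analyticity of $F$.

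To conclude, apply the implicit function theorem to $F(t,p) = 0$. The crucial computation is
\[
\frac{\partial F}{\partial p}(0, P(\p_1^*)) \;=\; -\int \tau_a\, d\bar{\nu}_{*},
\]
where $\bar{\nu}_{*}$ is the RPF equilibrium measure of $\overline{\p_1^* - P(\p_1^*)}$; this integral is finite by (4b) (finite expected return time) and strictly negative since $\tau_a \geq 1$. The theorem yields a unique analytic branch $p(t)$ near $t=0$ with $F(t,p(t)) \equiv 0$, and the generalized Abramov formula together with hypothesis (1) and uniqueness of the equilibrium state identify $p(t) = P(\p_1^* + t\p_2^*)$, giving the analyticity of the variational pressure. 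Analyticity of $t \mapsto P_G(\overline{\p_1^* - P(\p_1^*) + t\p_2^*}) = F(t, P(\p_1^*))$ is then immediate. The main technical obstacle lies in the first half of the second step: producing a Banach space calibrated simultaneously to the H\"{o}lder exponent and to the decay rate $r$ from (4c) on which $L_{\Psi_{t,p}}$ is at once bounded, jointly analytic in $(t,p)$, and quasi-compact with a spectral gap. This is delicate and is precisely why the somewhat technical condition (4c) is imposed.
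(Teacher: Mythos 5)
Your proposal is correct and follows essentially the same route as the paper: both implement Sarig's strategy from \cite{Sar00} by characterizing $P(\p_1^*+t\p_2^*)$ implicitly via the vanishing of the Gurevich pressure of the induced normalized potential, establishing a spectral gap and joint analyticity of the two-parameter family of Ruelle operators $L_{\overline{\p_1^*+z\p_2^*-\omega}}$ (your $\Psi_{t,p}$ with $p\,\tau_a$ is exactly the induced form of the constant $\omega$), invoking Kato's perturbation theory for the leading eigenvalue, and concluding with the implicit function theorem. Your explicit verification that $\partial F/\partial p=-\int\tau_a\,d\bar{\nu}_*$ is finite and nonzero is a detail the paper leaves implicit in its appeal to the Complex Implicit Function Theorem, but it is not a different argument.
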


\begin{proof}
By (1) we have that $P(\p_1^*+t\p_2^*)$ is given implicitly as $\omega\in\mathbb{R}$ such that
$$ P_G(\overline{\p_1^*+t\p_2^* - \omega}) = 0.$$
Given a potential $\bar{\p}:\bar{S}_A^{\mathbb{N}} \to \mathbb{R}$ consider the Ruelle operator 
$(L_{\bar{\p}}f)(x) = \sum_{\bar{\sigma}(\bar{y})=\bar{x}} e^{\bar{\p}(\bar{y})} f(\bar{y})$.
Let $\l(L_{\bar{\p}})$ denote the biggest eigenvalue of $L_{\bar{\p}}$.
By (2)-(4) and Lemma 4 in \cite{Sar00}, the operator $L_{\overline{\p_1^*-P(\p_1^*)}}$ has a spectral gap.
By standard results from perturbation theory of linear operators, \cite{Ka}, the same is true for all operators $L$
in some neighborhood $\mathcal{O}$ of $L_{\overline{\p_1^*-P(\p_1^*)}}$.
In addition, $L \to \l(L)$ is holomorphic, and if  $\bar{\p}:\bar{S}_A^{\mathbb{N}} \to  \mathbb{R}$ is such that $L_{\bar{\p}}\in \mathcal{O}$,
then $\l(L_{\bar{\p}})=\exp(P_G(\bar{\p}))$.

By (1),(2),(5) and the proof of Lemma 6 in \cite{Sar00}, $(z,\omega) \to L_{\overline{\p_1^* + z\p_2^*-\omega}}$ is analytic in a complex neighborhood of $(0,P(\p_1^*))$.
Consequently $(z,\omega) \to \log \lambda (L_{\overline{\p_1^* + z\p_2^*-\omega}})$ is holomorphic in a neighborhood of $(0,P(\p_1^*))$.
The lemma follows by the Complex Implicit Function Theorem (\cite{Boch}, p.39).
\end{proof}

Let $\p_1,\p_2: S_{A}^{\mathbb{Z}} \to \mathbb{R}$ be two potential functions, $a\in S$ a fixed state, $\left( \bar{S}_A^{\mathbb{Z}},\bar{\sigma} \right)$
the induced system on $[a]$, and $\bar{\p}_1,\bar{\p}_2 : \bar{S}_A^{\mathbb{Z}} \to \mathbb{R}$ the induced potentials.

\begin{lemma}\label{le2}
Assume that $\bar{\p}_1,\bar{\p}_2$ have summable variations. Then there exist functions $u_1,u_2 : S_A^{\mathbb{Z}} \to \mathbb{R}$ which are uniformly bounded on $[a]$ and such that the functions defined by:
$$ 
\Psi_l := \p_l + u_l\circ\sigma_A - u_l, ~~~~l=1,2  
$$
satisfy:
\begin{enumerate}
\item $\bar{\Psi}_l = \bar{\p}_l + u_l\circ \bar{\pi}\circ\bar{\sigma} - u_l\circ \bar{\pi}$, 
where $ \bar{\pi}: \bar{S}_A^{\mathbb{Z}}\to S_A^{\mathbb{Z}}$ is the canonical projection;
\item $\Psi_l(x)=\Psi_l(x')$, and $\bar{\Psi}_l(\bar{x})=\bar{\Psi}_l(\bar{x}')$ whenever $x_i=x_i', ~\bar{x}_i=\bar{x}_i'$ for all $i\ge 0$;
\item $P(\Psi_l)=P(\p_l)$, and $P_G(\bar{\Psi}_l)=P_G(\bar{\p}_l)$;
\item if $\bar{\p}_l$ is H\"{o}lder, then so is $\bar{\Psi}_l$;
\item if $\p_1,\p_2$ satisfy Condition (5) in Theorem \ref{presshift}, then so do $\Psi_1,\Psi_2$;
\item if $\bar{\p}_l$ satisfies Conditions (3) and (4) in Theorem \ref{presshift}, then so does 
$\bar{\Psi}_l$. 
\end{enumerate}
\end{lemma}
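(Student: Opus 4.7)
The plan is to run the classical Sinai--Bowen coboundary construction at the level of the induced shift, where the summable variation hypothesis is available, and then lift the outcome to $S_A^{\mathbb{Z}}$. The induced shift $(\bar{S}_A^{\mathbb{Z}}, \bar{\sigma})$ is a full shift on the countable alphabet $\bar{S}_A$, because any two induced blocks $[a\,a_1\ldots a_{n-1}]$ and $[a\,b_1\ldots b_{m-1}]$ can always be concatenated: $a_{n-1}\to a$ and $a\to b_1$ are both admissible by the very definition of an induced block. Fix an arbitrary reference past $\bar{p}\in \bar{S}_A^{\mathbb{Z}_{<0}}$ and define $\bar{r}:\bar{S}_A^{\mathbb{Z}}\to \bar{S}_A^{\mathbb{Z}}$ by $\bar{r}(\bar{y})_i=\bar{y}_i$ for $i\geq 0$ and $\bar{r}(\bar{y})_i=\bar{p}_i$ for $i<0$. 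Put
\[
\bar{u}_l(\bar{y}):=\sum_{n\geq 0}\bigl[\bar{\varphi}_l(\bar{\sigma}^n\bar{y})-\bar{\varphi}_l(\bar{\sigma}^n\bar{r}(\bar{y}))\bigr].
\]
Since $\bar{\sigma}^n\bar{y}$ and $\bar{\sigma}^n\bar{r}(\bar{y})$ agree in all coordinates $\geq -n$, each bracket is bounded by $Var_{n+1}(\bar{\varphi}_l)$, so summable variation forces absolute convergence and $\bar{u}_l$ is uniformly bounded. A telescoping computation gives
\[
\bar{\Psi}_l(\bar{y})=\sum_{n\geq 0}\bar{\varphi}_l(\bar{\sigma}^n\bar{r}(\bar{y}))-\sum_{n\geq 0}\bar{\varphi}_l(\bar{\sigma}^n\bar{r}(\bar{\sigma}\bar{y})),
\]
a function of $\bar{r}(\bar{y})$ and $\bar{r}(\bar{\sigma}\bar{y})$, hence only of the non-negative coordinates of $\bar{y}$.

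To lift to $S_A^{\mathbb{Z}}$, choose $\bar{p}$ so it corresponds under $\bar{\pi}$ to a fixed admissible past in $S_A^{\mathbb{Z}_{<0}}$ terminating at $a$. For $x\in[a]$ in the image of $\bar{\pi}$ set $u_l(x):=\bar{u}_l(\bar{\pi}^{-1}(x))$; this is uniformly bounded on $[a]$. For $x\notin[a]$ with finite forward return time $n(x):=\inf\{n\geq 1:\sigma^n x\in[a]\}$, propagate forward using the cocycle identity
\[
u_l(x):=u_l(\sigma^{n(x)}x)+\sum_{k=0}^{n(x)-1}\varphi_l(\sigma^k x)-\sum_{k=0}^{n(x)-1}\Psi_l(\sigma^k x),
\]
where $\Psi_l$ is distributed along each tower excursion by placing the entire value $\bar{\Psi}_l(\bar{\pi}^{-1}(\sigma^{n(x)}x))$ at the last site of the excursion (the position just before the next visit to $a$), an event determined purely by future coordinates. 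On the null set where $x$ never returns to $a$ put $u_l(x):=0$. Direct verification shows that the resulting $\Psi_l=\varphi_l+u_l\circ\sigma-u_l$ is future-measurable on $S_A^{\mathbb{Z}}$.

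The six properties are then routine to verify: (1) is the identity $\bar{\pi}\bar{\sigma}=\sigma^{\tau_a}\bar{\pi}$ together with telescoping within a block; (2) is built into the two previous steps; (3) holds because adding the coboundary $u_l\circ\sigma-u_l$ (with $u_l$ bounded on $[a]$) preserves both the variational pressure (Birkhoff sums cancel on any $\sigma$-invariant measure charging $[a]$) and the Gurevich pressure (periodic orbit sums over the base cylinder telescope to zero); (4) follows from $Var_n(\bar{u}_l)\leq\sum_{k\geq n}Var_k(\bar{\varphi}_l)$, so H\"older decay transfers from $\bar{\varphi}_l$ to $\bar{\Psi}_l$; (5) and (6) follow because Conditions (3)--(5) of Theorem \ref{presshift} are summability bounds on suprema over cylinders of $\varphi_l$, $\bar{\varphi}_l$, and the partition sums $Z_n,Z_n^*$, and a bounded coboundary alters such suprema by at most an additive constant, hence the sums by at most a multiplicative constant.

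The principal technical obstacle is the second step: extending $u_l$ off $[a]$ so that the resulting $\Psi_l$ depends only on future coordinates everywhere while $u_l$ remains uniformly bounded on $[a]$. The key insight is that in the tower representation of $\sigma$ over $[a]$ with roof $\tau_a$, the shape of each excursion---in particular, the first-return time $n(x)$---is itself a future event, which lets us distribute $\bar{\Psi}_l$ into individual $\Psi_l$-values whose future-measurability is automatic.
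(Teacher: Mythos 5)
Your construction is correct in substance but takes a genuinely different route from the paper's. The paper runs Sinai's coboundary trick directly on the two-sided shift $S_A^{\mathbb{Z}}$: for each symbol $x_0$ it fixes an admissible past, defines $r(x)$ by splicing that past onto $(x_i)_{i\geq 0}$, and sets $u_l(x)=\sum_{j\geq 0}\bigl[\p_l(\sigma_A^j x)-\p_l(\sigma_A^j r(x))\bigr]$; future-measurability of $\Psi_l$ is then immediate from telescoping, and boundedness on $[a]$ is obtained by regrouping the series into induced-potential differences, so no extension problem ever arises. You instead perform the trick on the induced full shift, where summable variation is the actual hypothesis and convergence of the defining series is transparent, and you then must extend $u_l$ off $[a]$ --- precisely the step the paper's route bypasses. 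Your extension does work, but it is the delicate point and the write-up leaves the key consistency check implicit: with your prescription ($\Psi_l=0$ along an excursion except at the site just before the next return, where it equals $\bar{\Psi}_l$ of the \emph{next} block), one has to verify that the return point $z\in[a]$ itself ends up with $\Psi_l(z)=\bar{\Psi}_l(\bar{\pi}^{-1}(z))-\bar{\Psi}_l(\bar{\sigma}\bar{\pi}^{-1}(z))$, so that the Birkhoff sum of $\Psi_l$ over the excursion starting at $z$ equals $\bar{\Psi}_l(\bar{\pi}^{-1}(z))$ rather than the value of the following block; until this is spelled out, the formula looks off by one excursion, and property (1) hinges on it. Two caveats, both shared with the paper's own proof: points whose forward orbit never meets $[a]$ are handled only up to measure zero, so property (2) is literally obtained only off that set; and preservation of the condition of Theorem \ref{presshift} involving $\p_1+\epsilon_0|\p_2|$ is not pure telescoping because of the absolute value, so ``bounded coboundary'' needs the extra remark that Birkhoff sums of $\Psi_1$ over periodic orbits through $[a]$ agree exactly with those of $\p_1$. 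What your route buys is an honest convergence argument (the paper's series for $u_l$ converges only after regrouping into excursion blocks, a point it glosses over); what the paper's route buys is brevity and the complete absence of an extension step.
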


\begin{proof}
For any $x_0\in S$ pick a sequence $\left( r_{x_0,i} \right)_{i=-\infty}^{-1} \subset S$ such that: $$A_{r_{x_0,i}r_{x_0,i+1}}=1\text{ for all }i<-1
\text{ and } A_{r_{x_0,-1}x_0}=1.$$
Define $r: S_A^{\mathbb{Z}} \to S_A^{\mathbb{Z}} $ by setting $(r(x))_i = x_i$ for $i\geq 0$, and
$(r(x))_i=r_{x_0,i}$ for $i<0$.
Let 
$$ u_l(x) := \sum_{j=0}^{\infty} \p_l(\sigma_A^j(x))- \p_l(\sigma_A^j(r(x))).  $$
For $x\in [a]$ we have that
$$|u_l(x)| = | \sum_{j=0}^{\infty} \p_l(\sigma_A^j(x))- \p_l(\sigma_A^j(r(x)))|
 =  | \sum_{j=0}^{\infty} \bar{\p}_l(\bar{\sigma}^j(\bar{x}))- \bar{\p}_l(\bar{\sigma}^j(\overline{r(x)}))| $$
$$ \leq \sum_{j=1}^{\infty} Var_j \bar{\p}_l <\infty,$$
so that $u_l$ is uniformly bounded on $[a]$.
Observe that
$$ u_l(\sigma_A(x)) - u_l(x) = -\p_l + \sum_{j=0}^{\infty}  \p_l(\sigma_A^j(r(x))) -  \p_l(\sigma_A^j(r(\sigma_A(x)))).   $$
Therefore $\Psi_l = \p_l + u_l\circ\sigma_A - u_l$ depends only on positive sides of sequences.
In addition $\bar{\Psi}_l = \bar{\p}_l + u_l\circ \bar{\pi}\circ\bar{\sigma} - u_l\circ \bar{\pi}$, so that $\bar{\Psi}$ depends only on the positive sides of sequences.

To see that  $P(\Psi_l)=P(\p_l)$ note that for any $\sigma_A$-invariant measure $\mu$ we have that
\begin{align*}
 h_{\mu}(\sigma_A) + \int \Psi_l d\mu &=  h_{\mu}(\sigma_A) + \int \left( \p_l +  u_l\circ\sigma_A - u_l\right)  d\mu  \\
 &=  h_{\mu}(\sigma_A) + \int \p_l d\mu +  \int u_l\circ\sigma_A d\mu - \int u_l  d\mu  \\
 &=  h_{\mu}(\sigma_A) + \int \p_l d\mu +  \int u_l d\mu - \int u_l  d\mu \\   
 &= h_{\mu}(\sigma_A) + \int \p_l d\mu .
 \end{align*}

To see that   $P_G(\bar{\Psi}_l)=P_G(\bar{\p}_l)$ note that for any $\bar{x}\in \bar{S}^{\mathbb{Z}}_{A}$ with $\bar{\sigma}^n(\bar{x})=\bar{x}$ we have that
$$ \sum_{j=0}^{n-1} u_l(\bar{\pi}(\bar{\sigma}^{j+1}(\bar{x}))) - u_l(\bar{\pi}(\bar{\sigma}^j(\bar{x}))) =   u_l(\bar{\pi}(\bar{\sigma}^{n}(\bar{x}))) - u_l(\bar{\pi}(\bar{x})) =0. $$

(4) is proved in (Lemma 3.3,\cite{ind}).
 (5) follows from the fact that $u_l$ is bounded on $[a]$.
 (6) follows because $u_l$ is bounded on $[a]$ and  $P(\Psi_l)=P(\p_l)$.
\end{proof}

\begin{proof}[Proof of Theorem \ref{presshift}]
Consider functions $\p_1,\p_2$ satisfying the hypothesis of Theorem \ref{presshift}.
For $l=1,2$ let $\Psi_l^*: S_A^{\mathbb{N}} \to \mathbb{R}$ be defined by
$\Psi_l^*:= \Psi_l((\pi^*)^{-1}(x))$, where $\pi^* : S^{\mathbb{Z}} \to S^{\mathbb{N}}$ is the canonical projection and
$\Psi_l $ is defined in Lemma \ref{le2}. The function $\Psi_l^*$ is well defined since $\Psi_l$ only depends on the positive part of the sequence.
In addition, $\Psi_1^*$ and $\Psi_2^*$ satisfy the hypothesis of Lemma \ref{le1}.
Therefore $t \to P(\Psi_1^* + t\Psi_2^*)$ is real analytic.
We have that
\begin{align*}
 P(\Psi_1^* + t\Psi_2^*) &= P(\Psi_1 + t\Psi_2)\\ 
 &= P( \p_1 + u_1\circ\sigma_A - u_2 +t \p_2 + t(u_2\circ\sigma_A - u_2))   \\
  &=P( \p_1  +t \p_2 + \tilde{u}\circ\sigma_A - \tilde{u}), 
  \end{align*}
where $\tilde{u}:= u_1+t u_2$ is bounded on $[a]$.
By the argument presented in Lemma \ref{le2} we have that
$ P( \p_1  +t \p_2 + \tilde{u}\circ\sigma_A - \tilde{u}) =  P( \p_1  +t \p_2)$
and $t \to P( \p_1  +t \p_2)$ is real analytic on $(-\epsilon,\epsilon)$.
\end{proof}

\subsubsection{Proof of Theorem \ref{thm:pressure}}

Theorem \ref{thm:pressure} is an immediate consequence of Theorem \ref{presshift} and the following lemma.

For $l=1,2$ denote $\p_l=\hat{\varphi}_l\circ \pi_1$, where $\pi_1$ was introduced in Section \ref{sec:tow}.
Let $\bar{\p}_l$ be the corresponding induced potential on $[\tilde{X}]$.
We have the following.

\begin{lemma}
\begin{enumerate}

\item If $\varphi_1,\varphi_2$ satisfy $(1)-(3)$ in Theorem \ref{thm:pressure} then $\p_1, \p_2$ satisfy $(1)$ in Theorem \ref{presshift},\\

\item If $\varphi_1$ satisfies $(2)$ in Theorem \ref{thm:pressure} then $\p_1$ satisfies $(3)$ and $(4)$ in Theorem \ref{presshift},\\

\item If $\varphi_1,\varphi_2$ satisfy $(1)-(2)$ in Theorem \ref{thm:pressure} then $\p_1, \p_2$ satisfy $(5)$ in Theorem \ref{presshift}.\\

\end{enumerate}  
\end{lemma}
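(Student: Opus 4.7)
The plan is to identify the induced symbolic potentials $\bar{\Phi}_l$ on $[\tilde{X}]$ with the original induced potentials $\tilde{\varphi}_l$, then derive each symbolic condition from the hypotheses on $\varphi_1,\varphi_2$. Taking $a = \tilde{X}$ in the tower alphabet $\hat{S}$, the induced alphabet $\bar{S}_A$ is in natural bijection with $S$: a first-return cylinder $[\tilde{X}\,\tilde{X}_{J,1}\cdots \tilde{X}_{J,\tau(J)-1}]$ corresponds to $J\in S$ and has length $|\bar{a}| = \tau(J)$. A direct computation using $\hat{\varphi}_l(x,k) = \varphi_l(f^k(x))$ shows $\bar{\Phi}_l = \tilde{\varphi}_l$ under this coding. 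Condition (P2) therefore instantly yields condition (2) of Theorem \ref{presshift}, and throughout I use the pressure identity $P(\Phi_l) = P_L(\varphi_l)$, which follows from the argument in Theorem \ref{thm: tower}(1).

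For part (1), I verify $P(\Phi_1 + t\Phi_2) < \infty$ and the normalization $P_G(\overline{\Phi_1 + t\Phi_2 - P(\Phi_1 + t\Phi_2)}) = 0$. The first follows from (P3) applied to $\varphi_1 + t\varphi_2 + c$: this yields finiteness of the zeroth Gurevich partition function, which together with local H\"olderness of the induced potential forces finite Gurevich pressure. The vanishing Gurevich pressure of the normalized induced potential is precisely the implicit definition of $P_L$ established in \cite{ind}.

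For part (2), I derive (3), (4a), and (4b) of Theorem \ref{presshift} from (P4) for $\varphi_1$. Local H\"older continuity absorbs the dependence of $\bar{\Phi}_1$ on the ambient tail, so condition (3) reduces to
\[
\sum_{J\in S}\sup_{x\in J}\exp\bigl(\tilde{\varphi}_1(x) - P_L(\varphi_1)\tau(J)\bigr) = \sum_J \sup_{x\in J}\exp\varphi_1^+(x),
\]
which is dominated by the (stronger) series in (P4). For (4a), divergence is equivalent to positive recurrence of $\overline{\Phi_1 - P(\Phi_1)}$, which follows from the existence of the equilibrium Gibbs measure $\nu_{\varphi_1^+}$ guaranteed by Theorem \ref{thm: induced}(1). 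For (4b), a periodic orbit of $\bar{\sigma}$ of induced length $n$ has original-shift length $\tau^{(n)}(x) \geq n$, so $n Z_n^*(\overline{\Phi_1 - P(\Phi_1)}, \bar{a})$ is dominated by terms that (P4) controls via its extra $\exp(\epsilon\tau)$ factor, giving the required $\ell^1$-summability.

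For part (3), I interpret the reference to condition ``(5)'' as condition (4c) of Theorem \ref{presshift}. Here we work with the tower (not induced) partition function: first return to $[\tilde{X}]$ at tower time $n$ corresponds to $J\in S$ with $\tau(J) = n$, so up to bounded distortion
\[
n\, r^n\, Z_n^*(\Phi_1 + \epsilon_0|\Phi_2|, a) \asymp \sum_{J:\,\tau(J)=n}\tau(J)\, r^{\tau(J)}\sup_{x\in J}\exp\bigl(\tilde{\varphi}_1(x) + \epsilon_0\,\widetilde{|\varphi_2|}(x)\bigr).
\]
Taking $\epsilon_0 = t$ small and applying (P4) to $\varphi_1 + t|\varphi_2|$ yields summability of $\sum_J \tau(J)\exp(\tilde{\varphi}_1 + t\widetilde{|\varphi_2|} - (P_L(\varphi_1 + t|\varphi_2|) - \epsilon)\tau)$; since $P_L(\varphi_1 + t|\varphi_2|) \to P_L(\varphi_1)$ as $t\to 0$, one may choose $r = \exp(-P_L(\varphi_1) + \delta)$ with $0 < \delta < \epsilon - (P_L(\varphi_1 + t|\varphi_2|) - P_L(\varphi_1))$ so that $r > \exp(-P(\Phi_1))$ as required and the target series is dominated by the (P4) series. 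The main obstacle is the bookkeeping in part (2): induced-time partition sums must be controlled against widely varying original-shift lengths, and the $\exp(\epsilon\tau)$ factor in (P4) is precisely the strength needed to absorb this mismatch.
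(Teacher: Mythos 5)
Your overall strategy coincides with the paper's: identify $\bar{\p}_l$ with $\tilde{\varphi}_l$ under the tower coding, get condition (1) of Theorem \ref{presshift} from the results of \cite{ind} relating $P_L$ to the Gurevich pressure of the normalized induced potential, get condition (3) by comparing $\sum_J\sup_J e^{\varphi_1^+}$ with the stronger series in \textbf{(P4)}, and get condition (4c) (the paper's ``(5)'') by noting that first returns to the base at tower time $n$ correspond to elements $J$ with $\tau(J)=n$ and then absorbing $\log r$ into the exponent so that \textbf{(P4)} for $\varphi_1+t|\varphi_2|$ applies; your choice of $r$ differs from the paper's only in parametrization. Your reading of ``(5)'' as (4c) is also the intended one.

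The genuine gap is in your verification of condition (4b), i.e.\ $\sum_n n\,Z_n^*\bigl(\overline{\p_1-P(\p_1)},\bar{a}\bigr)<\infty$. You propose to dominate the factor $n$ by $\tau^{(n)}(x)=\sum_{k=0}^{n-1}\tau(\tilde{f}^kx)$ and then control the result by the $\exp(\epsilon\tau)$ factor in \textbf{(P4)}. But \textbf{(P4)} bounds a sum over \emph{single} partition elements, whereas $Z_n^*$ sums over first-return $n$-cylinders $[JJ_1\ldots J_{n-1}J]$. Converting $\tau^{(n)}e^{(\varphi_1^+)_n}$ into a product of per-symbol factors $e^{\varphi_1^+ +\epsilon\tau}$ and summing over all $n$-cylinders only yields a bound of the form $C^n\bigl(\sum_J\sup_J e^{\varphi_1^+ +\epsilon\tau}\bigr)^n$, which need not be summable in $n$ (indeed $P_G(\varphi_1^+ +\epsilon\tau)\ge\epsilon>0$, so these partition sums generically grow exponentially). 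The paper instead uses the Gibbs property of the equilibrium measure $\nu=\nu_{\varphi_1^+}$ (with $P_G(\varphi_1^+)=0$) to write $e^{(\varphi_1^+)_n(\bar x)}\le C\,\nu([JJ_1\ldots J_{n-1}J])$, so that $\sum_n nZ_n^*\le C\sum_n n\,\nu(\{x\in J: n_J(x)=n\})$, which is finite by Kac's formula. Note that your own argument for (4a) already contains the fix: existence of the Gibbs probability measure gives \emph{positive} recurrence of $\varphi_1^+$, which by definition is the conjunction of (4a) \emph{and} (4b) (divergence of $\sum Z_n$ alone is only recurrence). If you route both (4a) and (4b) through positive recurrence (or through the paper's explicit Gibbs-plus-Kac computation), the proof closes; as written, the mechanism you give for (4b) does not.
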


\begin{proof}
We start with the proof of the first statement. By Theorem 4.6 in \cite{ind} we have that $P_G(\overline{\varphi_1+t \varphi_2 +c})<\infty$ and 
$P_G(\overline{\varphi_1 +c})<\infty$. By Theorem 4.2 in \cite{ind} this implies that $P(\varphi_1+t \varphi_2 +c)<\infty$ and 
$P(\varphi_1+c)<\infty$. Since $c\in\mathbb{R}$ is a finite number, we obtain that $P(\varphi_1+t \varphi_2)<\infty$ and 
$P(\varphi_1)<\infty$. The fact that $P_G(\overline{\varphi_1+t \varphi_2 - P(\varphi_1 + t\varphi_2)})=0$ follows by Theorem 4.6 in \cite{ind}.

Assume now that  $\varphi_1 $ satisfies $(P4)$. This immediately implies that $\bar{\p}_1$ satisfies Condition $(3)$ in Theorem \ref{presshift}.
We also obtain that $P_G(\varphi_1^+)=P_G(\p_1^+)=0$ and there exists a unique Gibbs measure $\nu$ for $\varphi_1^+$.
Fix $J\in S$. We then have,

$$  \sum_{n\geq 1} Z_n \left( \overline{\p_1-P(\p_1)}, [J]  \right)  = 
  \sum_{n\geq 1} ~ \sum_{\bar{x}\in J, \tilde{f}^n(\bar{x})=\bar{x}} e^{( \varphi_1^+(\bar{x}) )_n}  $$
  $$ \geq C \sum_{n\geq 1} \sum_{[JJ_1\ldots J_{n-1}J]}   \nu([JJ_1\ldots J_{n-1}J]) =  C \sum_{n\geq 1}   \nu([J]) = \infty. $$
  
  On the other hand,
  
 $$\sum_{n\geq 1} n Z_n^* \left( \overline{\p_1-P(\p_1)}, [J]  \right) = \sum_{n\geq 1} n  \sum_{\bar{x}\in J, \tilde{f}^n(\bar{x})=\bar{x} \\ \tilde{f}^k(\bar{x})\notin J \text{ for } k<n}  e^{( \varphi_1^+(\bar{x}) )_n} $$
 $$ \leq C \sum_{n\geq 1} n  \sum_{[JJ_1\ldots J_{n-1}J], J_i\neq J}  \nu([JJ_1\ldots J_{n-1}J])   $$
 $$   =  \sum_{n\geq 1} n \nu( \{x\in J ~|~ n_J(\bar{x})=n     \}) < \infty.$$
 Here $ n_J(\bar{x})$ denotes the first return time of $\tilde{f}(\bar{x})$ to $J$.
 This proves that $\p_1$ satisfies Condition $(4)$ in Theorem \ref{presshift}.

By (2) in Theorem \ref{thm:pressure} for each $t\in [-\frac{\epsilon_0}{2},\frac{\epsilon_0}{2}]$ there exists $\epsilon_t>0$ such that
$$ \sum_{J\in S} \tau(J) \sup_{x\in J} \exp( \overline{\varphi_1(x) + t |\varphi_2(x)|- P_L(\varphi_1(x) + t|\varphi_2(x)|) + \epsilon_t}) < \infty.  $$
Let $\tilde{\epsilon}:=\min\{\epsilon_t | t\in  [-\frac{\epsilon_0}{2},\frac{\epsilon_0}{2}] \}$ and let $|t|$ be small enough so that 
$- P_L(\varphi_1(x) + t|\varphi_2(x)|) + \tilde{\epsilon} > - P_L(\varphi_1)$.
Set $r:= \exp(- P_L(\varphi_1(x) + t|\varphi_2(x)|) + \tilde{\epsilon} )$.
We have that,

$$ \sum_{n\geq 1} n r^n Z_n^*(\p_1 + t |\p_2|, \tilde{X}) \leq  \sum_{n\geq 1} n r^n  \sum_{\tau(J)=n} \sup_{x\in J} \exp( \overline{\varphi_1(x) + t |\varphi_2(x)|})$$ 
$$= \sum_{n\geq 1} n  \sum_{\tau(J)=n} \sup_{x\in J} \exp( \overline{\varphi_1(x) + t |\varphi_2(x)|+ \ln r }) < \infty. $$
\end{proof}

\subsection{Proof of Theorem \ref{appl: Young ergprop}}
 The first statement is proved as Theorem 7.7 in \cite{ind}. Moreover, it is proved that the measure $\mu_t$ has exponential tail.  Note that $f$ can be  represented as a map with an inducing scheme by Proposition 6.2. in \cite{ind}. 
Statements (4a), (4b), and (4d) of Theorem \ref{thm: manifold} imply that  $\mu_t$ is mixing, has exponential decay of correlations, and satisfies the CLT. Once $\mu_t$ is mixing, then by Statement (3) of Theorem \ref{thm: manifold}, it is Bernoulli.

\end{document}